\title{
Geometry Error Analysis of a parametric mapping for Higher Order Unfitted Space-Time Methods
\thanks{Submitted to the editors DATE.
}}
\author{Fabian Heimann
\and Christoph Lehrenfeld\thanks{Institut f\"ur Numerische und Angewandte Mathematik, Universit\"at G\"ottingen, 
(\texttt{\{f.heimann, lehrenfeld\}@math.uni-goettingen.de}).}}
\crefname{hypothesis}{Hypothesis}{Hypotheses}
\providecommand{\vertiii}[1]{{\left\vert\kern-0.25ex\left\vert\kern-0.25ex\left\vert #1 
\right\vert\kern-0.25ex\right\vert\kern-0.25ex\right\vert}}
\pgfplotsset{
cycle list/Set1-5,
cycle multiindex* list={
mark list*\nextlist
Set1-5\nextlist
},
}
\providecommand{\id}{{\operatorname{id}}}
\providecommand{\idx}{\hyperlink{def:idx}{\operatorname{id}_{{x}}}}
\providecommand{\idxt}{\hyperlink{def:idx}{\operatorname{id}_{{x},t}}}
\providecommand{\Idx}{\hyperlink{def:Idx}{\operatorname{Id}_{{x}}}}
\providecommand{\Idxt}{\hyperlink{def:Idxt}{\operatorname{Id}_{{x},t}}}
\providecommand{\In}[1]{\hyperlink{def:In}{I_{#1}}}
\providecommand{\Qn}[1]{\hyperlink{def:Qn}{Q^{#1}}}
\providecommand{\Gh}{\hyperlink{def:Gh}{G_h}}
\providecommand{\GH}{\hyperlink{def:GH}{G_H}}
\providecommand{\Ghi}[1]{\hyperlink{def:Ghi}{G_{h,#1}}}
\providecommand{\PhG}{\hyperlink{def:PhG}{P_h^\Gamma}}
\providecommand{\ppi}{\hyperlink{def:pi}{\pi}}
\providecommand{\phix}{\hyperlink{def:phi}{\phi}}
\providecommand{\philin}{\hyperlink{def:philin}{\phi^{\text{lin}}}}
\providecommand{\philini}[1]{\hyperlink{def:philini}{\phi^{\text{lin}}_{#1}}}
\providecommand{\dU}{\hyperlink{def:dU}{d_U}}
\providecommand{\lphi}{\hyperlink{def:lphi}{\ell_{\!\phi}}}
\providecommand{\Sinf}[2]{C^{#1}(#2)}
\providecommand{\FE}{\hyperlink{def:FE}{FE}}
\providecommand{\FEs}{\hyperlink{def:FE}{FEs}}
\providecommand{\phih}{\hyperlink{def:phih}{\phi_h}}
\providecommand{\phihi}[1]{\hyperlink{def:phihi}{\phi_{h,#1}}}
\providecommand{\partials}{\hyperlink{def:partials}{\partial^s}}
\providecommand{\PsiG}{\hyperlink{def:PsiG}{\Psi^\Gamma}}
\providecommand{\PsiGi}[1]{\hyperlink{def:PsiGi}{\Psi^\Gamma_{#1}}}
\providecommand{\QGn}{\hyperlink{def:QGn}{Q_h^\Gamma}}
\providecommand{\QGnp}{\hyperlink{def:QGnp}{Q_{h,+}^\Gamma}}
\providecommand{\Dxt}{\hyperlink{def:Dxt}{D_{\mathbf{x},t}}}
\providecommand{\tOmega}{\hyperlink{def:tOmega}{\tilde{\Omega}}}
\providecommand{\Th}{\hyperlink{def:Th}{\mathcal{T}_h}}
\providecommand{\ThU}{\hyperlink{def:ThU}{\mathcal{T}_h^U}}
\providecommand{\QhU}{\hyperlink{def:QhU}{\mathcal{Q}_h^U}}
\providecommand{\QU}{\hyperlink{def:QU}{Q_h^U}}
\providecommand{\Wh}[1]{\hyperlink{def:Wh}{{W}_h^{#1}}}
\providecommand{\Vh}[1]{\hyperlink{def:Vh}{{V}_h^{#1}}}
\providecommand{\U}{\hyperlink{def:U}{U}}
\providecommand{\Ust}[1]{\hyperlink{def:Ust}{U_s(#1)}}
\providecommand{\Ubar}{\hyperlink{def:Ubar}{\bar{U}}}
\providecommand{\phii}[1]{\hyperlink{def:phii}{\phi_{#1}}}
\providecommand{\ti}[1]{\hyperlink{def:ti}{t_{#1}}}
\providecommand{\phidt}{\hyperlink{def:phidt}{\phi_{\Delta t}}}
\providecommand{\phiH}{\hyperlink{def:phiH}{\phi_{H}}}
\providecommand{\phidti}[1]{\hyperlink{def:phidti}{\phi_{\Delta t, #1}}}
\providecommand{\Qlin}{\hyperlink{def:Qlinn}{Q^{\text{lin}}}}
\providecommand{\EQlin}{\hyperlink{def:EQlinn}{Q^{\text{lin}}_{\mathcal{E}}}}
\providecommand{\Omlin}[1]{\hyperlink{def:Omlin}{\Omega^{\text{lin}}(#1)}}
\providecommand{\EOmlin}{\hyperlink{def:EOmlin}{\Omega^{\text{lin}}_{\mathcal{E}}}}
\providecommand{\cnphi}[1]{\hyperlink{def:cnphi}{c_{\nabla\!\phi}^{#1}}}
\providecommand{\Cnphi}[1]{\hyperlink{def:Cnphi}{C_{\nabla\!\phi}^{#1}}}
\providecommand{\tQn}{\hyperlink{def:tQn}{\tilde{Q}^n}}
\providecommand{\Hrs}[2]{\hyperlink{def:Hrs}{H_{\scalebox{0.5}[0.35]{$\square$}}^{#1,#2}}}
\renewcommand{\b}{\hyperlink{def:b}{b}}
\renewcommand{\d}{\hyperlink{def:d}{d}}
\providecommand{\bOne}{\hyperlink{def:b1}{b_1}}
\providecommand{\bTwo}{\hyperlink{def:b2}{b_2}}
\providecommand{\di}[1]{\hyperlink{def:di}{d_{#1}}}
\providecommand{\G}{\hyperlink{def:G}{G}}
\providecommand{\Gi}[1]{\hyperlink{def:Gi}{G_{#1}}}
\providecommand{\dOmega}{\hyperlink{def:dOmega}{d_{\Omega}}}
\providecommand{\Verts}{\hyperlink{def:verts}{\mathcal{V}}}
\providecommand{\Facets}{\hyperlink{def:facets}{\mathcal{F}}}
\providecommand{\ThQ}{\hyperlink{def:ThQ}{\mathcal{Q}_h^n}}
\providecommand{\ThGQ}{\hyperlink{def:ThGQ}{\mathcal{Q}_h^\Gamma}}
\providecommand{\ThGQp}{\hyperlink{def:ThGQp}{\mathcal{Q}_{h,\!+}^\Gamma}}
\providecommand{\ThGQS}{\ThGQp\!\!\setminus\ThGQ}
\providecommand{\ThG}{\hyperlink{def:ThG}{\mathcal{T}_h^\Gamma}}
\providecommand{\ThGp}{\hyperlink{def:ThGp}{\mathcal{T}_{h,+}^\Gamma}}
\providecommand{\ThbOne}{\hyperlink{def:Thb1}{\mathcal{T}^\Gamma_{b1}}}
\providecommand{\ThbTwo}{\hyperlink{def:Thb2}{\mathcal{T}^\Gamma_{b2}}}
\providecommand{\Thest}{\hyperlink{def:Thest}{\Theta_h^{\text{st}}}}
\providecommand{\Theh}{\hyperlink{def:Theh}{\Theta_h}}
\providecommand{\Psist}{\hyperlink{def:Psist}{\Psi^{\text{st}}}}
\providecommand{\Psii}{\hyperlink{def:Psi}{\Psi}}
\providecommand{\OG}{\hyperlink{def:OG}{\Omega^\Gamma}}
\providecommand{\OGp}{\hyperlink{def:OGp}{\Omega_{+}^\Gamma}}
\providecommand{\Ohi}[1]{\hyperlink{def:Oh}{\Omega_h(#1)}}
\providecommand{\OGbOne}{\hyperlink{def:OGbOne}{\Omega^\Gamma_{b1}}}
\providecommand{\OGbTwo}{\hyperlink{def:OGbTwo}{\Omega^\Gamma_{b2}}}
\providecommand{\PsihG}{\hyperlink{def:PsihG}{\Psi^\Gamma_{h}}}
\providecommand{\PsihGi}[1]{\hyperlink{def:PsihGi}{\Psi^\Gamma_{h,#1}}}
\providecommand{\PsidtG}{\hyperlink{def:PsidtG}{\Psi^\Gamma_{\Delta t}}}
\providecommand{\PsiHG}{\hyperlink{def:PsiHG}{\Psi^\Gamma_{H}}}
\providecommand{\PsidtGi}[1]{\hyperlink{def:PsidtGi}{\Psi^\Gamma_{\Delta t,#1}}}
\providecommand{\Phihst}{\hyperlink{def:Phihst}{\Phi^{\text{st}}_{h}}}
\providecommand{\ddti}[1]{\hyperlink{def:ddti}{d_{\Delta t,#1}}}
\providecommand{\ddh}{\hyperlink{def:dh}{d_{h}}}
\providecommand{\dH}{\hyperlink{def:dH}{d_{H}}}
\providecommand{\dhi}[1]{\hyperlink{def:dhi}{d_{h,#1}}}
\providecommand{\Gdti}[1]{\hyperlink{def:Gdti}{G_{\Delta t,#1}}}
\providecommand{\ET}{\hyperlink{def:ET}{\mathcal{E}_T}}
\providecommand{\elli}[1]{\hyperlink{def:elli}{\ell_{#1}}}
\providecommand{\bi}[1]{\hyperlink{def:bi}{b_{i}}}
\providecommand{\It}[1]{\hyperlink{def:It}{I^t_{#1}}}
\providecommand{\Is}[1]{\hyperlink{def:Is}{I^s_{#1}}}
\providecommand{\ThnG}[1]{\hyperref[eq:new_st_isoparam_regions]{\mathcal{T}_{h,#1}^{\Gamma}}}
\providecommand{\ThehG}{\hyperlink{def:ThehG}{\Theta_h^\Gamma}}
\providecommand{\Thehst}{\hyperlink{def:Thehst}{\Theta_h^{\text{st}}}}
\providecommand{\Qhn}{\hyperlink{def:Oh}{Q_{h}^n}}
\begin{document}

\maketitle

\begin{abstract}
In [Heimann, Lehrenfeld, Preu\ss{}, SIAM J. Sci. Comp. 45(2), 2023, B139 - B165] new geometrically unfitted space-time Finite Element methods for partial differential equations posed on moving domains of higher-order accuracy in space and time have been introduced.
For geometrically higher-order accuracy a parametric mapping on a background space-time tensor-product mesh has been used. In this paper, we concentrate on the geometrical accuracy of the approximation and derive rigorous bounds for the distance between the realized and an ideal mapping in different norms and derive results for the space-time regularity of the parametric mapping. These results are important and lay the ground for the error analysis of corresponding unfitted space-time finite element methods.
\end{abstract}

\begin{keywords}
  moving  domains, unfitted  FEM, levelset method, isoparametric FEM, space-time FEM, higher-order FEM
\end{keywords}

\begin{AMS}
65M60, 65M85, 65D30
\end{AMS}

\section{Introduction}
\hypertarget{def:fe}{Finite element (FE)} simulations play an important role in engineering and other computational sciences \cite{brennerscott, hughes2012finite}. In many of those applications, complex geometries are involved. The most straightforward strategy for taking these geometries into account is to generate a mesh following the geometry. For polytopal domains, for instance, a simplicial mesh can be generated in an exact fit with the physical domain. For other domains -- imagine for instance a sphere -- a polygonal approximation can be meshed and a mapping is applied to curve the boundary elements to arrive at a higher-order accurate domain approximation \cite{CIARLET1972217, bernardi89}. 
In the last two decades, unfitted \FEs~%
have been proposed as an alternative approach to solving PDEs on complex domains \cite{burman15}. The idea here is that the \FE~simulation is based on a background grid which does not take into account the geometry of the problem. Instead, the geometry is prescribed separately, e.g. implicitly by a levelset function $\phi$. In that way, the computational task of mesh generation can be avoided, and new questions concerning e.g. numerical quadrature on the implicitly cut elements arise.

By design, the unfitted paradigm becomes particularly attractive for problems involving time-dependent domains, because in that case the mesh must otherwise be carefully transferred or regenerated. We focus on this setting in this article. Our main contribution is the development and mathematical validation of a strategy to provide a computationally feasible and higher-order accurate unfitted geometry description for time-dependent \emph{smooth} domains. This provides a crucial component for unfitted \FE~simulations, in particular space-time discretisations for bulk- or surface problems. We presented such a bulk space-time \FE~method for the convection-diffusion problem in \cite{HLP2022}, whilst a similar method for surface problems have been considered in \cite{sassreusken23}. The rigorous mathematical error analysis of both methods, which will be the topic of forthcoming papers, heavily exploits the results from this paper. In that way, we split the mathematical analysis between mere geometry considerations, which are considered in this work, and the problem and method-specific analysis sections, which will be treated in separate works.

A crucial distinction is that between second-order and higher-order accurate geometry representations in space on simplicial meshes: For a second-order accurate geometry description, for which the maximal distance between exact and discrete geometry should be of order $h^2$, where $h$ denotes the mesh size, it suffices to generate an elementwise linear interpolant of the levelset function and use the implied discrete geometry. This discrete geometry will be a simple polytope on each element, so that integration can be performed easily by tesselation\footnote{
Similar statements also hold for quadrilateral and hexahedral elements based on bi- and trilinear levelset representations, see \cite{HL_ENUMATH_2019}.} \cite{L_PHD_2015}. One option for obtaining a higher-order accurate geometry description suggests improving the quality of the second-order approximation using an isoparametric mapping \cite{L_CMAME_2016, LR_IMAJNA_2018}. In this paper, we want to build on this idea, specifically, the numerical analysis in \cite{LR_IMAJNA_2018}, and extend it to the space-time setting. We have seen numerically in \cite{HLP2022} that the generalisation to tensor-product space-time levelset representations is feasible.
Compared to \cite{HLP2022}, where this statement is validated by numerical experiments, we provide a rigorous numerical analysis in this paper.
\subsection{Approaches in the literature} \label{ssec:lit}
We would like to relate our methods to the literature on \emph{unfitted} discretisations.

First, the issue of numerical integration can be numerically solved with special integration techniques, cf. e.g. \cite{saye15,friesomerovic16}. To our best knowledge, whilst the methods make reasonable approximations of relevant quantities, their well-functioning has not yet been mathematically analysed rigorously. Some of these methods come with other restrictions such as quadrilateral (or hexahedral) elements or the necessity of local refinements within one element, which can potentially be computationally expensive for high-order accuracy demands. 
Similar integration strategies are also applied in other approaches in the literature. In particular, in \cite{BADIA202360}, a variant space-time method is presented and analysed, which introduces a cell agglomeration technique. The cell-agglomeration technique is an alternative to the so-called Ghost-penalty stabilisation which is often used in methods such as \cite{HLP2022} in order to ensure stability in the presence of small cuts. The numerical analysis in \cite{BADIA202360} applies in both cases. In terms of geometries in the numerical analysis, \cite{BADIA202360} does not take into account the inexactness of discrete geometries, but the exact domains are used. We aim to follow the numerical implementation more closely in the sense of taking into account the discrete geometry in the detailed numerical analysis. We note that for the DG-in-time method of \cite{HLP2022}, an analysis assuming the exact handling of geometries has also been presented in \cite{preuss18}, and an analysis involving a semi-discrete geometry in \cite{heimann20}.

Further applications of the unfitted space-time methodology been investigated for an osmotic cell swelling problem in \cite{wendler22}, the heat equation on two overlapping meshes in \cite{larson2023spacetime}, Navier-Stokes equations in \cite{anselmann2021cutfemNSE,AB21},
two-phase flow problems in \cite{LR_SINUM_2013, L_PHD_2015, FRACHON201977, FRACHON2023111734} and coupled surface-bulk problems in \cite{Z18, HLZ16}.

Apart from the space-time discretisations on which we focus here, time-stepping methods provide an alternative approach to solving the same physical problems. 
In particular, those methods have been studied for a bulk convection-diffusion problem in low- \cite{LO_ESAIM_2019} and high-order \cite{LL_ARXIV_2021}, the Stokes-like problems \cite{IMAJNA_vWRL_2021, burman2020eulerian, neilan2023eulerian}. Moreover, Trace-FEM discretisations for surface problems have been studied \cite{LOX, doi:10.1137/21M1403126, olshanskii2023eulerian}.

\subsection{The spatial case of the unfitted isoparametric FEM} \label{ssec:spatial}
In this section, we outline the main concepts in the construction and analysis of the isoparametric FEM for the \emph{spatial} setting, as the construction and analysis of the \emph{space-time} version share a similar structure. 
We refer to \cite{L_CMAME_2016} and \cite{LR_IMAJNA_2018} for details on the \emph{spatial} setting. 

Assume we want to approximate the potentially complicated physical domain $\Omega$, which is embedded in some background domain $\tilde \Omega$ equipped with a simplicial\footnote{We restrict to simplicial meshes for ease of presentation. Extensions to quadrilateral and hexahedral meshes, following \cite{HL_ENUMATH_2019}, are however easily possible.} mesh $\mathcal{T}_h$, $\bigcup_{T\in \mathcal{T}_h} \overline{T} = \overline{\tilde \Omega}$. Further, assume that a levelset function $\phi(x)$ encodes $\Omega$ s.t.
\begin{equation}
 \Omega = \{ x \in \tilde \Omega \, | \, \phi(x) < 0\},
\end{equation}
and that an $\mathcal{T}_h$-elementwise linear interpolation of $\phi$, denoted by $\phi^{\text{lin}}$ is given. On 
\begin{equation}
 \Omega^{\text{lin}} := \{ x \in \tilde \Omega \, | \, \phi^{\text{lin}}(x) < 0\}
\end{equation}
the task of numerical integration 
is computationally feasible as the local polygonal part of $\Omega^{\text{lin}}$ on each respective $T \in \mathcal{T}_h$ can be subdivided in few simplices\footnote{The detailed distinction of cases for 2d and 3d is not complicated, but too long to be given in detail here. We refer the reader to \cite{L_PHD_2015}.}, cf. l.h.s. of \cref{fig:isossketch} for an example. This allows for the construction of high-order accurate quadrature rules on $\Omega^{\text{lin}}$ with \emph{positive quadrature weights}. 
The isoparametric mapping now aims at providing a mesh deformation $\Theh \colon \tilde \Omega \to \tilde \Omega$, such that the image of $\Omega^{\text{lin}}$ under this mapping is a computationally feasible and higher-order accurate approximation of $\Omega$, cf. r.h.s. of \cref{fig:isossketch} for an example.
For this to work the domain has to be sufficiently well captured by $\Omega^{\text{lin}}$, i.e. one requires sufficient mesh resolution. 

\begin{figure}
  \begin{center}
    \includegraphics[width=0.65\textwidth]{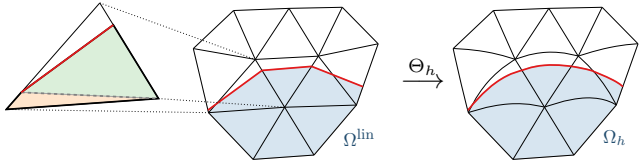}
  \end{center}
  \vspace*{-0.2cm}
  \caption{Sketch of a simple subdivision for quadrature on $T \cap \Omega^{\text{lin}}$, the discrete geometry $\Omega^{\text{lin}}$ and the mapped geometry $\Omega^h$ obtained from the mapping $\Theta_h$. Reproduction from \cite{HLP2022}.}
  \vspace*{-0.25cm} \label{fig:isossketch}
\end{figure}

The analysis of this mapping starts with the construction of a theoretical and element-local counterpart of $\Theh$, i.e. a mapping $\Psi^\Gamma\colon \bigcup \{ T \in \mathcal{T}_h \, | \, T \cap \Omega^{\text{lin}} \neq \varnothing \} \to \tilde \Omega $ with the property $\Psi^\Gamma(\partial \Omega^{\text{lin}}) = \partial \Omega$. So, the criterion of computational feasibility is neglected for the moment to construct an ``ideal'' mapping. An appropriate search direction $G$ is fixed and for all points within a cut element, $x \in \{ T \in \mathcal{T}_h \, | \, T \cap \Omega^{\text{lin}} \neq \varnothing \}$, $\Psi^\Gamma(x)$ is defined to be the point which is the closest one along the search direction to reproduce the value of $\phi^{\text{lin}}(x)$ now at $\phi(\Psi^\Gamma(x))$. For interpolation results within the isoparametric unfitted FEM exploiting this mapping, the boundedness in relevant norms of the extension of $\Psi^\Gamma$ onto all of $\tilde \Omega$, which is called $\Psi$ will be central. In \cite{LR_IMAJNA_2018}, a \FE~blending procedure is suggested to arrive at $\Psi = \mathcal{E} \Psi^\Gamma$.

In the next step, 
a discrete \FE~approximation of the ideal mapping $\Psi^\Gamma$, which is called $\ThehG$, is constructed. 
In practice, to reproduce the values of a higher-order discrete approximation of $\phi$, denoted by $\phi_h$, a discrete search direction $G_h$ replaces the search direction $G$
and a subsequent application of an Oswald-type interpolator is used to obtain inter-element continuity. Finally, $\ThehG$ is extended with the same blending procedure 
to yield $\Theta_h := \mathcal{E} \ThehG$.

The desired result that the so-constructed discrete approximation is of higher-order is stated and proven in this framework as a small difference between $\Psi$ and $\Theta_h$. In order to be able to include these results in a Strang-Lemma-type analysis of a particular discretisation of a PDE problem (in the case of \cite{LR_IMAJNA_2018} a two-domain Poisson problem is considered, in the case of \cite{L_GUFEMA_2017} a fictitious domain problem and in \cite{grande2018analysis} a surface Poisson problem), the mapping $\Phi_h$ is defined as $\Phi_h := \Psi \circ (\Theta_h)^{-1}$ and bounded close to the identity. Overall, the following three components are crucial for the success of the approach (in the spatial setting as well as in the \emph{space-time} setting):
\begin{itemize}
  \item \textit{Robust quadrature rules} based on positive quadrature weights on $\Omega^{\text{lin}}$. 
  \item \textit{Geometrical accuracy}, i.e. higher-order bounds on $\| \Phi_h - \id \|$ in relevant norms.
  \item \textit{Approximability of sufficiently smooth functions on deformed meshes}, which is transferred from standard interpolation results of the uncurved mesh $\mathcal{T}_h$ if sufficiently high spatial derivatives of $\Psi$ stay uniformly bounded.
\end{itemize}
This overall structure for a rigorous mathematical analysis in the spatial case provides a blueprint for the analysis of the space-time case, as we shall see in more detail later.
\subsection{Space-Time isoparametric mapping}
Before we proceed with the analysis for the space-time case, let briefly summarize the central idea of the space-time generalisation of the isoparametric mapping. For more details, we refer the reader to our computational exposition \cite{HLP2022}.

In the moving domain setting, the physical domain is allowed to change in time, i.e. $ \Omega = \Omega (t) \subseteq \tilde \Omega$, where $\tilde \Omega$ is a domain with an unfitted simplicial mesh as before. Accordingly, the levelset function is assumed to be a space-time function, $\phi = \phi(x,t)$. Generalising the idea of a higher-order accurate approximate levelset function $\phi_h$, we assume that there exist spatially higher order $\phi_h^i$ (for $i=0,\dots,q_t$) such that
$  
 \phi_h(x,t) = \sum_{i=0}^{q_t} \ell_i(t) \cdot \phi_h^i(x),
$
where 
$\mathcal{P}^{q_t}(I_n)= \mathrm{span} \{\ell_0,\dots,\ell_{ q_t } \}$
and $\mathcal{P}^{q_t}(I_n)$ denotes the set of polynomials of order up to $q_t$ on the time interval of interest $I_n$. Denoting by $\Theta_{h}(\phi_h, \phi^{\text{lin}})$ the \emph{merely spatial} isoparametric mappings, we can obtain the space-time isoparametric mapping in the following tensor-product fashion:
  \begin{align}
    \Theta_h^n(x,t) &= {\sum}_{i=0}^{q_t} \ell_i(t) \cdot \Theta_h(\phi_h^i(x),I_h^1 \phi_h^i (x)). 
  \end{align}
In this equation, $I_h^1$ shall refer to an interpolation operator into the space of continuous elementwise linear finite element functions. In that way, the discrete polynomial basis in time enables the generalisation of the space-time mapping.

A slight difference to the merely spatial contruction $\Theta_h( \cdot, \cdot)$ as in \cite{L_CMAME_2016, LR_IMAJNA_2018} relates to the transition of the mapping from the vicinity of the domain boundary to the bulk, where no mesh deformation is wanted. In the merely spatial case, a Finite Element blending is used, which can be extended to the space-time case by defining the cut elements with respect to the overall time slice, as it was done in \cite{HLP2022}. Alternatively, a smooth blending function can be involved, which yields helpful features in the numerical analysis, as we shall see later in more detail. 

The task of numerical integration can be solved using $\Theta_h^n(\cdot, t)$ to go from the integration domain to a reference configuration based on $I_h^1 \phi_h (\dots,t)$. For this configuration (adapted) iterated integrals in time and simplex subdivision strategies yields robust and higher order accurate quadrature, cf. \cite[Section 5]{HLP2022}.

Let us stress that the construction of the space-time mapping heavily relies on the spatial problem described in \cref{ssec:spatial}. However, for the analysis of the geometry errors and properties of the resulting space-time mapping, errors and space and time do not decouple so that the generalization of the spatial results is not straightforward.

A sketch of involved domains, subdivision and quadrature points (in space) for a fixed time $t \in I_n$ is displayed in \Cref{fig:int_points_moving} for an example case.
\begin{figure}
 \begin{center}
 \href{https://www.youtube.com/watch?v=DIiRt1yhfhU}{ \includegraphics[width=0.9\textwidth]{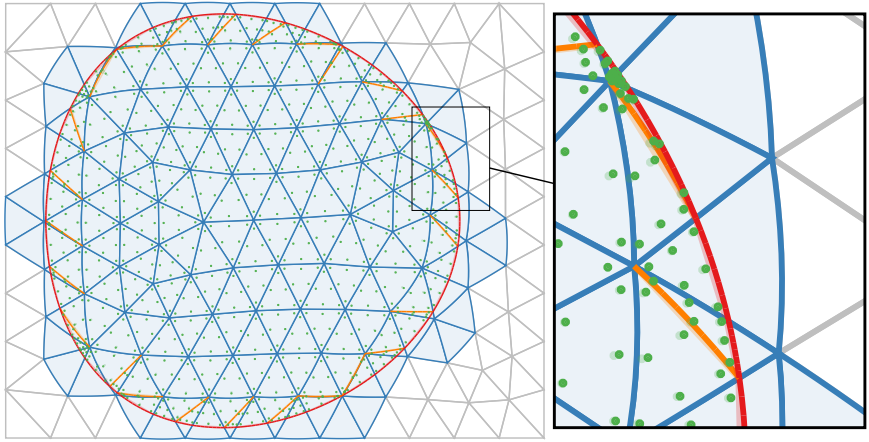} }
 \end{center}
 \vspace*{-0.5cm}
 \caption{
  Discrete regions and integration points for a test geometry (cf. \cite{HLP2022}). Within the time slice, all elements contributing to the space-time domain are shown in blue. All of these (and their neighbors) are deformed by the mapping. This yields a higher order approximation of the physical domain, shown in red. To illustrate the improved approximation properties, in light red we display the (spatially) second order accurate interface. In orange, the subdivision into simplices for the purpose of numerical integration is shown, with solid lines after deformation and in light colour before. On the so-contructed regions, numerical integration (in space) can be performed by mapping standard Gaussian simplex rules onto the geometry. These points are shown in green. Here, the FE~blending function is used. An animated version is available at 
\url{https://www.youtube.com/watch?v=DIiRt1yhfhU} 
and the version for the smooth blending at 
\url{https://www.youtube.com/watch?v=CWI-YzeET6E}.
 }
 \label{fig:int_points_moving}
 \vspace*{-0.25cm}
\end{figure}

\subsection{Structure of the paper}
In this paper, we want to essentially reproduce the aforementioned results for the spatial setting now in relation to this generalisation in space and time. As it turns out, the interplay of temporal and spatial errors plays a crucial role and we aim for high-order error bounds under mild conditions on the anisotropy of space and time resolution.
To this end,
this paper is structured as summarised in the following.\footnote{In this work a large number of symbols and definitions need to be handled by the reader. To facilitate backtracking, we have introduced hyperlinks to the respective definitions and symbols. These hyperlinks are marked in green and can be used to conveniently find the respective definitions.} See also \Cref{fig:num_analysis_overview} for an overview illustration.

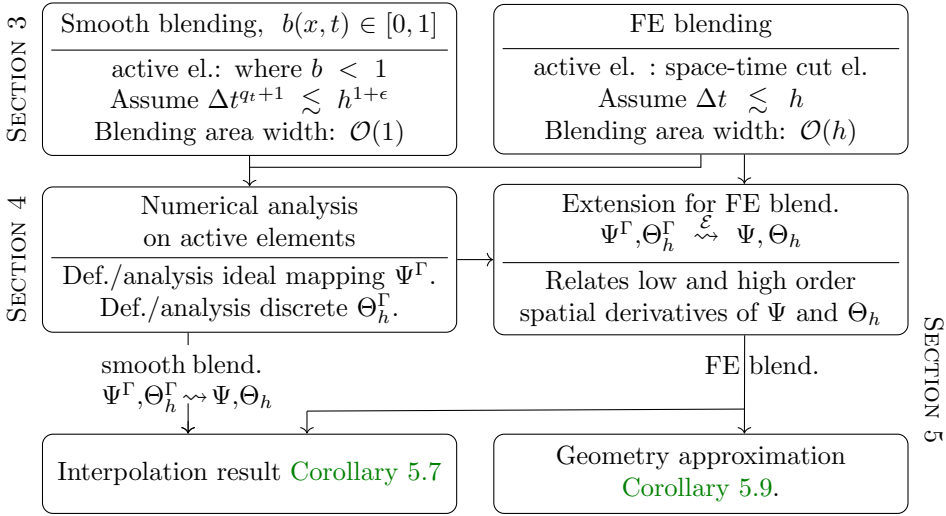
\begin{figure}
  \tikzset{external/export next=false} \centering
\begin{tikzpicture}[
    line/.style={draw, -latex'},
    block/.style={rectangle, rounded corners, draw, text width=15em, text centered, minimum height=3em} 
]

\node[block] (smblend) at (0,7) { Smooth blending,~ $b(x,t) \in [0,1]$\\[1ex] \hrule ~\\[-0.5ex]
active el.: where $b<1$ \\
Assume $\Delta t^{q_t+1} \lesssim h^{1+\epsilon}$ \\
Blending area width: $\mathcal{O}(1)$
};

\node[block] (feblend) at (6,7) { FE blending\\[1ex] \hrule ~\\[-0.5ex]
active el. $:$ space-time cut el. \\
Assume $\Delta t \lesssim h$ \\
Blending area width: $\mathcal{O}(h)$
};

\node[rotate=90] at (-3.1,7) {\textsc{Section 3}};
\node[rotate=90] at (-3.1,4.6) {\textsc{Section 4}};
\node[rotate=-90] at (9.1,3) {\textsc{Section 5}};

\node[block] (num_analysis_at_active) at (0,4.6) {Numerical analysis \\on active elements\\[1ex] \hrule ~\\[-0.5ex]
Def./analysis ideal mapping $\Psi^\Gamma$. \\
Def./analysis discrete $\Theta_h^\Gamma$.
};

\node[block] (feext) at (6,4.6) {Extension for FE blend. $\Psi^\Gamma,\! \Theta_h^\Gamma \! \stackrel{\mathcal{E}}{\leadsto}\! \Psi, \Theta_h$\\[1ex] \hrule ~\\[-0.5ex]
Relates low and high order spatial derivatives of $\Psi$ and $\Theta_h$
};

\node[block, 
] (int_result) at (0,1.75) {
Interpolation result \Cref{boundedness_result_final} 
};

\node[block, 
] (geom_result) at (6,1.75) {
Geometry approximation\\ \Cref{lemma_Phi_h_Id_diff_small}.
};

\draw[->] (smblend) to node[midway] (mid) {} (num_analysis_at_active);
\draw (feblend.270) to ($(feblend.270) + (0,-0.17)$) to ($($(feblend.270) + (0,-0.17)$)!(num_analysis_at_active.90)!($(feblend.270) + (2,-0.17)$)$);
\draw[->] (feblend.300) to ($(feext.70)!(feblend.300)!(feext.20)$);
\draw[->] ($(feext.-70)!(feblend.300)!(feext.-20)$) to node[pos=0.3] {
  ~~~~FE blend.} ($(geom_result.90)!(feblend.300)!(geom_result.85)$);

\coordinate (FE_blend_arrow1_midway) at ($ ($(feext.-70)!(feblend.300)!(feext.-20)$)!0.75! ($(geom_result.90)!(feblend.300)!(geom_result.85)$)$);

\draw (FE_blend_arrow1_midway) to ($(int_result.35) + (0,0.3)$);
\draw[->] ($(int_result.35) + (0,0.3)$) to (int_result.35);

\draw[->] (num_analysis_at_active.0) to (feext.180);

\draw[->] (num_analysis_at_active.230) to node[pos=0.5,fill=white, scale=1.0] {
  \begin{minipage}{2.3cm}smooth blend.~~~\\ $\Psi^\Gamma,\!\Theta_h^\Gamma\! \leadsto\! \Psi,\! \Theta_h$
  \end{minipage}} ($ (int_result.85)!(num_analysis_at_active.230)!(int_result.90)$);

\draw[->] ($ (num_analysis_at_active.230)!0.7!($ (int_result.85)!(num_analysis_at_active.230)!(int_result.90)$) $) to ($(geom_result.155)!($ (num_analysis_at_active.230)!0.7!($ (int_result.85)!(num_analysis_at_active.230)!(int_result.90)$) $)!(geom_result.165)$);
\end{tikzpicture}
\caption{Overview of the numerical analysis in this paper.}
\label{fig:num_analysis_overview}
 \vspace*{-0.5cm}
\end{figure}

In \cref{sec:levelset_geometry}, we introduce how the unfitted geometry in space-time is described by a levelset function and the assumptions considered for the analysis.
For the geometrical accuracy one essentially only requires a non-trivial mapping in the vicinity of the domain boundary
and would like the mapping to transition to the trivial mapping away from it. 
In a first phase, the mapping is constructed on a set of elements, referred to as the \emph{active elements} in the following, and then in a second phase, if necessary, extended on the remaining elements.
A rapid transition from active elements to the remainder of the mesh within a layer of only one element is computationally attractive but can lead to sharper gradients in the mapping than for a smooth transition which however requires to choose a larger set of \emph{active elements} for the first phase. Both options are introduced and discussed in \cref{sec:blendings}.
Several mappings for the \emph{active elements}, including an ideal and the realisable mapping, are constructed and analysed in
\cref{sec:mappings}.
%
%
The results in this section lay the ground for the results of the global mappings which are introduced and analysed in the subsequent \cref{sec:global_mappings} as the extension away from active elements. These show the geometrical accuracy and regularity bounds for the mappings.
Then, we elaborate on space-time interpolation based on the deformed meshes in \cref{sec:interpolation}. 
Finally, we end the paper with some numerical experiments in \cref{ref:numexp} and a conclusion in \cref{ref:concl}.

Several proofs that are tedious, but elementary or similar to other proofs presented in the main part or other literature are deferred to \cref{sec:more_proofsa,sec:more_proofsb}.

Within the paper, we will formulate several assumptions. All formulated assumptions are assumed to hold for the remainder of the paper unless explicitly stated otherwise.

\section{The space-time levelset geometry description} \label{sec:levelset_geometry}
\subsection{The space-time geometry and some basic notation}

We assume that the space-time geometry $Q$ of interest describes a time-dependent domain on a time interval $[0,T]$, $Q = \bigcup_{t \in [0,T]} \Omega(t) \times \{ t \}$.
We assume that $\Omega(t)$ is embedded into a sufficiently large spatial \hypertarget{def:tOmega}{background domain $\tOmega \subseteq \mathbb{R}^d$}, $d=1,2,3$, for all $t \in [0,T]$ so that $Q \subset \tOmega \times [0,T]$. For the spatial boundary of the space-time domain, we introduce the symbol $\partials$, so that
\hypertarget{def:partials}{$\partials Q := \bigcup_{t \in [0,T]} \partial \Omega(t) \times \{ t \}$ is the spatial boundary of the space-time domain}. \hypertarget{def:dOmega}{We further assume that there is $\dOmega > 0$ so that $\operatorname{dist}(\partial \tOmega, \partial \Omega(t)) > \dOmega$ for all $t \in [0,T]$.}
On the space-time domains, we use differential operators, e.g. the gradient operator $\nabla$ or the general derivative operator $D$, for the \emph{spatial} operations and use the notation $\partial_t$ for the time derivative. On rare occasions we will also use the full space-time derivative which we denote by \hypertarget{def:dxt}{$\Dxt$}. 
While gradients $\nabla$ of scalars are understood as column vectors, for vector-valued functions, the gradient $\nabla$ is understood as the row-wise gradient.

For the spatial identity operator we use the symbol \hypertarget{def:idx}{$\idx: \mathbb{R}^{d+1} \to \mathbb{R}^d$} so that $\idx(x,t)=x$ and for the spatial identity matrix \hypertarget{def:Idx}{$\Idx = \nabla \idx \in \mathbb{R}^{d\times d}$}. For the corresponding space-time versions we use \hypertarget{def:idxt}{$\idxt: \mathbb{R}^{d+1} \to \mathbb{R}^{d+1}$} and \hypertarget{def:Idxt}{$\Idxt \in \mathbb{R}^{(d+1) \times (d+1)}$}

\begin{figure}[ht!]
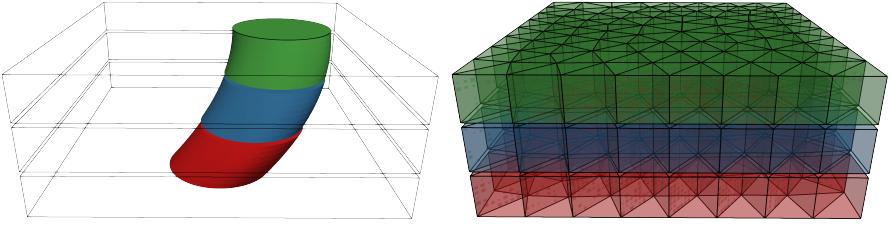

  \begin{center}
    \includegraphics[width=0.45\textwidth]{graphics/space_time_2.png}
    \includegraphics[width=0.45\textwidth]{graphics/space_time_1.png}
  \end{center}\vspace*{-0.35cm}
  \caption{Illustration of the space-time geometry obtained from a circle moving in a square background domain. The left figure shows the space-time domain $Q$ embedded in a set of three time slabs, and the right figure shows the tensor-product background space-time mesh of three time slabs.} \label{fig:space_time_geometry} \vspace*{-0.25cm}
\end{figure}

Let the time interval $[0,T]$ be further subdivided into $N$ (not necessarily equally spaced) time slices \hypertarget{def:In}{$\In{n} \coloneqq (t_{n-1}, t_n]$, $n=1,\dots,N$}, $t_0 = 0, t_N = T$ and let us assume a \hypertarget{def:Th}{simplicial shape regular triangulation $\mathcal{T}_h$} of $\tOmega$ to be given that is time-independent on each time interval $\In{n}$. Then, from $\Th$ and a time interval $\In{n}$ we automatically obtain a tensor-product triangulation of the space-time domain, denoted by \hypertarget{def:ThQ}{$\ThQ$}, see \cref{fig:space_time_geometry} for an illustration.

For geometry description and approximation, time intervals can be treated independently. Hence, we will fix one time interval $\In{n}$ for our considerations in the following. However, to reflect the origin of the time interval as a time-stepping related object, we denote its length as $\Delta t = t_n - t_{n-1} = |\In{n}|$. We define the corresponding part of the space-time geometry as \hypertarget{def:Qn}{$\Qn{n} := Q \cap \tQn$} with the space-time background domain \hypertarget{def:tQn}{$\tQn \coloneqq \tOmega \times \In{n}$.}
For the spatial resolution, we define the mesh size $h = \max_{T \in \Th} h_T$ with the local mesh size $h_T = \operatorname{diam}(T),~ T \in \Th$. 

On the tensor product background space-time mesh, we introduce the following \FE~space of continuous piecewise polynomials of order $k_s$ in space and $k_t$ in time: 
\begin{equation}
  \hypertarget{def:Vh}{
 \Vh{k_s,k_t} := \Wh{k_s} \otimes \mathcal{P}^{k_t} (\In{n})} \text{ with } 
 \hypertarget{def:Wh}{\Wh{k_s} \coloneqq \{ v \in {H}^1(\tOmega) \, | \, v|_T \in \mathcal{P}^{k_s}(T) \forall T \in \Th \}}.
\end{equation}
where $H^1$ denotes the usual (spatial) Sobolev space and $\mathcal{P}^{k_t}(\In{n})=\mathcal{P}^{k_t}(\In{n};\mathbb{R})$ is the space of polynomials up to degree $k_t$ on $\In{n}$.
Analogously we define $\Vh{k_s,k_t}(\mathcal{Q}_h^{\text{sub}})$ for a tensor-product sub-mesh $\mathcal{Q}_h^{\text{sub}}$ built from the spatial sub-mesh $\mathcal{T}_h^{\text{sub}}$ of $\Omega^{\text{sub}}$ and $\In{n}$ by replacing $\tOmega$ by the corresponding spatial domain $\Omega^{\text{sub}}$.

In the following we use $\lesssim$ and $\gtrsim$ to denote inequalities up to constants that are independent of the mesh size $h$, the time step size $\Delta t$ and the cut configuration, i.e. the position of $\Qn{n}$ in the mesh. We use $\simeq$ to imply $\lesssim$ and $\gtrsim$ simultaneously.

For the forthcoming analysis we are interested in the asymptotic behavior of the error in the space-time geometry approximation. We will therefore assume at several places that the mesh size $h$ and the time step size $\Delta t$ are \emph{sufficiently small} without an explicit specification of the respective constants.

\subsection{The space-time levelset description}
Crucial for the remainder of this work is the way that we assume the domain to be described by: Through a levelset function. 
\hypertarget{def:phi}{The space-time levelset function $\phix$ is defined on $\tQn$ and its zero levelset describes the space-time geometry. We will require some smoothness of $\phix$ in the vicinity of its spatial boundary.}
For the neighborhood of the spatial boundary we assume that there is $\dU > 0$ independent of $t$ describing the \hypertarget{def:dU}{spatial neighborhood} \hypertarget{def:Ust}{$\Ust{t} := \{ x \in \tOmega \mid \operatorname{dist}(x, \partial \Omega(t)) < \dU \}$} for $t \in \In{n}$ and that $\phix$ is smooth in $\Ust{t}$ for all $t \in \In{n}$. A corresponding space-time neighborhood is obtained as \hypertarget{def:U}{$U := \bigcup_{t \in \In{n}} \Ust{t} \times \{t\}$}. 

The upcoming discretisation will be controlled by two order parameters, $q_s$ for the spatial and $q_t$ for temporal order.
We assume that $\phix$ is sufficiently smooth in $\U$ and formulate the following assumption:
\begin{assumption} \label{ass:lsetreg}
 There exists a function $\phix \in C^0(\tQn; \mathbb{R})$ s.t.
 \begin{equation}
 \Omega(t) = \{ x \in \tOmega \mid \phix(x,t) < 0 \}, \quad \Qn{n} = \{ (x,t) \in \tQn \, | \, \phix(x,t) < 0 \},
 \end{equation}
 which satisfies the following regularity assumptions on a neighbourhood $\U$ of $\partials \Qn{n}$ with a fixed $\dU > 0$:
 \begin{equation} \label{eq:cnphi} \addtocounter{equation}{1}\tag{\theequation-\texttt{$\phi$\footnotesize{bnd}}}
  \hypertarget{def:cnphi}{\cnphi{}} \leq \|  \nabla \phix \|_{\infty, \U} \! \leq \hypertarget{def:Cnphi}{\Cnphi{}}, \qquad \| \partial_t \phix \|_{\infty, \U}, \| D^2 \phix \|_{\infty, \U},  \| \partial_t \nabla \phix \|_{\infty,\U} \lesssim 1.
 \end{equation}
 Moreover, the levelset function at each time $t \in \In{n}$ fullfils a weak signed distance property: 
 \begin{equation} \addtocounter{equation}{1}\tag{\theequation-\texttt{$\phi$\footnotesize{wsd}}}
  | \phix(x+ \epsilon \nabla \phix(x),t) - \phix(x+ \tilde{\epsilon} \nabla \phix(x),t)| \simeq | \epsilon - \tilde{\epsilon} | \quad \textnormal{ for } x \in \U(t). \label{signed_dist_like_prop}
 \end{equation}
\end{assumption}

\begin{definition}
  We define the regularity index \hypertarget{def:lphi}{$\lphi$} of $\phix$ as the largest integer value (or infinity) for which $\phix \in C^0(\tOmega)\cap C^2(\U) \cap \Sinf{\lphi}{\U}$.
\end{definition}
We note that the simultaneous space-time regularity assumption could be weakened to a separate regularity assumption in space and time. However, this would complicate the analysis further and we therefore opt for the stronger variant here.
We make the following assumption on the regularity index $\lphi$ of $\phix$:
\begin{assumption} \label{ass:lphi:1}
  $\lphi \geq \max\{q_s,q_t\}+2$.
\end{assumption}

Note that the smoothness of $\phix$ in $\U$ (together with \eqref{eq:cnphi}) implies a corresponding smoothness of $\partials \Qn{n} \in C^{\lphi}$ and $\partial \Omega(t),~t \in \In{n}$, respectively. Further, we note that with \cref{ass:lsetreg} all (space-time) elements that are intersected by $\partials \Qn{n}$ and a finite number of neighbor elements can be guaranteed to lie within $\U$ for sufficiently small $h$ and $\Delta t$. Let \hypertarget{def:ThU}{$\ThU$ be the submesh of elements that are contained in $\U$ for all times $t \in \In{n}$, i.e. $T \times \In{n} \subset \U$ for all $T \in \ThU$.} \hypertarget{def:Ubar}{We denote the spatial domain corresponding to $\ThU$ as $\Ubar$} and \hypertarget{def:Qhu}{the set of space-time elements as $\QhU = \{T \times \In{n} \mid T \in \ThU \}$} with corresponding space-time domain \hypertarget{def:QU}{$\QU$}.



\subsection{Discrete levelset functions}
In practice, we will typically not have access to the exact levelset function $\phix$ but only to a discrete approximation. We assume that we are given a discrete levelset approximation \hypertarget{def:phih}{$\phih \in \Vh{q_s,q_t}$ of $\phix$} with $q_s, q_t \in \mathbb{N}$ and that the following approximation assumption holds.
\begin{assumption} \label{diff_phi_phihn}
  For $(m_s, m_t) \in \{0,\dots,q_s+1\} \times \{0,1\} \cup \{0\} \times \{0,\dots,q_t+1\}$ 
    \begin{align}
    \| D^{m_s} \partial_t^{m_t} (\phix - \phih) \|_{\infty, \QhU} & \lesssim h^{q_s +1 - m_s} + \Delta t^{q_t + 1 - m_t}. \label{diff_phi_phihn_spaceD} 
    \addtocounter{equation}{1}\tag{\theequation-\texttt{$\phi_h$\footnotesize{acc}}}    
 \end{align}
\end{assumption}
Note that with the regularity assumption in \cref{ass:lphi:1}, we have $m_s\leq q_s+1\leq \lphi$ and $m_t\leq q_t+1\leq \lphi$ in \cref{diff_phi_phihn} so that it would hold for $\phih$ being a suitable 
interpolation operator. 


In the upcoming analysis we will make use of the tensor product structure of the \FE~space $\Vh{q_s,q_t}$ and we assume a \emph{nodal} basis \hypertarget{def:elli}{ $\{ \elli{i} \}$ of $\mathcal{P}^{q_t}(\In{n})$ }
 with nodes \hypertarget{def:ti}{ 
  $\{ \ti{i} \}$ 
  } so that $\elli{j}(\ti{i}) = \delta_{ij}$.
We further define with \hypertarget{def:It}{$\It{q_t}$ the temporal (nodal) interpolation operator applied in space-time, i.e. $\It{q_t}: v(x,t) \mapsto \sum_{i=0}^{q_t} \ell_i(t) v(x,\ti{i})$.}
We can hence develop $\phih \in \Vh{q_s,q_t}$ in terms of spatial functions \hypertarget{def:phihi}{$\phihi{i}(x,t) = \phih(x,t_i) \in \Wh{q_s}$} and have
\begin{equation}
  \phih (x,t) = \sum_{i=0}^{q_t} \elli{i}(t) \phihi{i}(x), 
  \label{eq:decomp_phi_h}
\end{equation}
Note that we specifically opted for a nodal basis in time here. More general options would also be possible but would complicate the notation and analysis slightly.
For notational consistency we also introduce \hypertarget{def:phii}{$\phii{i}(x) = \phix(x,\ti{i})$}.

Next, we will assume the existence of two semi-discrete versions of $\phix$ which have tensor product form and are only discrete in time and continuous in space or vice versa.
These semi-discrete versions will be crucial for the construction of semi-discrete mappings in \cref{sec:discrete-in-time-mapping,sec:discrete-in-space-mapping} that are needed to obtain robustness in the error bounds w.r.t. anisotropy in space and time.
\begin{assumption} \label{diff_phidt_phihn}
  We assume that there is \hypertarget{def:phidt}{$\phidt \in \mathcal{P}^{q_t}(I_n;C^0(\tOmega)\cap C^{q_s+1}(\bar U))$} 
with \hypertarget{def:phidti}{$\phidti{i} \coloneqq \phidt(\cdot,\ti{i})$} s.t. for $m_s = 0, \dots, q_s+1$ there holds
\begin{subequations}
  \begin{align}
   \|D^{m_s} (\phidti{i} - \phihi{i})\|_{\infty, \ThU} & \lesssim h^{q_s +1 - m_s}, \text{ which implies } \label{diff_phid_phih:i}
   \addtocounter{equation}{1}\tag{\theequation-\texttt{$\phi_{\Delta t\mid h,i}$}}    
   \\ \|D^{m_s} (\phidt - \phih) \|_{\infty, \QhU} & \lesssim h^{q_s +1 - m_s}, ~\text{ and } \label{diff_phid_phih} 
   \addtocounter{equation}{1}\tag{\theequation-\texttt{$\phi_{\Delta t\mid h}$}}    
   \\
    \|D^{m_s} (\phidt - \phix) \|_{\infty, \QhU} & \lesssim h^{q_s +1 - m_s} + \Delta t^{q_t +1 }. \label{diff_phid_phi}
    \addtocounter{equation}{1}\tag{\theequation-\texttt{$\phi_{\Delta t}$\footnotesize{acc}}}    
   \end{align}
  \end{subequations}
  \end{assumption}

  \begin{assumption} \label{diff_phiH_phihn}
    We assume that there is \hypertarget{def:phiH}{$\phiH \in C^{q_t+1}(I_n;W_h^{q_s})$} s.t. for $m_t = 0, \dots, q_t+1$ and $m_s \in \{0,1\}$ there holds
  \begin{subequations}
    \begin{align}
     \|\partial_t^{m_t} (\phiH - \phih) \|_{\infty, \QhU} & \lesssim \Delta t^{q_t +1 - m_t}, ~\text{ and } \label{diff_phiH_phih} 
     \addtocounter{equation}{1}
     \tag{\theequation-\texttt{$\phi_{H\mid h}$}}    
     \\
      \|D^{m_s} \partial_t^{m_t} (\phiH - \phix) \|_{\infty, \QhU} & \lesssim h^{q_s +1 - m_s} + \Delta t^{q_t +1 - m_t}. \label{diff_phiH_phi}
      \addtocounter{equation}{1}\tag{\theequation-\texttt{$\phi_{H}$\footnotesize{acc}}}    
     \end{align}
    \end{subequations}
    In contrast to the previous assumptions we will not automatically assume it to hold, but will refer to it explicitly.
    \end{assumption}
  
Note that at first glance, $\phiH = \phih$ seems a straightforward choice rendering the introduction of \cref{diff_phiH_phihn} obsolete. However, the assumed accuracy in \eqref{diff_phiH_phi} is stronger than the one in \eqref{diff_phi_phihn_spaceD} as the bound for $m_s=1$ and $m_t > 1$ is not included in \cref{diff_phidt_phihn}. Alternatively, \cref{diff_phiH_phihn} could be circumvented by strengthening \cref{diff_phidt_phihn}. This would however imply to increase the assumptions of the realized approximation $\phih$ while the stronger assumption in \cref{diff_phiH_phihn} is only an assumption on a theoretical proxy $\phiH$.

  \begin{remark}
    The previous assumptions are reasonable if $\phih$ stems from a discretisation with separable temporal and spatial errors. Let us consider the simplest case where $\phih$ is obtained from a tensor-product space-time interpolation, $\phih = \It{} I^s \phix$ with $I^s$ a corresponding spatial interpolation operator. Then, we can set $\phiH = I^s \phix$ and $\phidt = \It{} \phix$ and obtain the desired properties from continuity and the respective approximation properties of $\It{}$ and $I^s$, respectively. 
    \cref{diff_phiH_phihn} will explicitly be mentioned when used and corresponding weaker statements will be given in parallel for the case that the assumptions are not available.
  \end{remark}

  We note that for sufficiently small $h$ and $\Delta t$ the weak signed distance property from \eqref{signed_dist_like_prop} of $\phi$ carries over to $\bar\phi \in \{ \phidt, \phiH, \phih \}$:
  \begin{equation} \addtocounter{equation}{1}\tag{\theequation-\texttt{$\phi_{h}$\footnotesize{wsd}}}
    | \bar \phi(x+ \epsilon \nabla \bar \phi(x),t) - \bar \phi(x+ \tilde{\epsilon} \nabla \bar \phi(x),t)| \simeq | \epsilon - \tilde{\epsilon} | \quad \textnormal{ for } x \in \U(t). \label{signed_dist_like_prop_dt}
   \end{equation}


\subsection{The reference geometry based on \texorpdfstring{$\phi^{\text{lin}}$}{linear levelset fct.}}

Dealing with piecewise higher-order polynomial levelset functions is difficult in the context of robust numerical integration, cf. \cref{ssec:lit}. This is why we introduce a reference geometry based on a linear approximation of $\phix$ in space in the spirit of the spatial isoparametric FEM, cf. \cref{ssec:spatial}. 
For the function $\phih \in \Vh{q_s,q_t}$, we define the spatial piecewise linear nodal interpolation:
\begin{equation}
  \hypertarget{def:philin}{\philin(x,t)}
   \coloneqq (I_h^1 \phih(\cdot, t))(x) \stackrel{\cref{eq:decomp_phi_h}}{=}\sum_{i=0}^{q_t} \elli{i}(t) ( I_h^1 \phihi{i}(x)) \eqqcolon \hypertarget{def:philini}{\sum_{i=0}^{q_t} \elli{i}(t) \philini{i}(x)},
\end{equation}
where $I_h^1: C^0(\Omega) \to W_h^1$ denotes the spatial nodal interpolation operator. This implies
\begin{corollary} \label{diff_phihn_philin}
It holds for $m=0,1$
\begin{equation}
 \| D^{m} (\phih - \philin) \|_{\infty, \QhU} \lesssim h^{2 - m}.
 \addtocounter{equation}{1}\tag{\theequation-\texttt{$\phi_{\text{lin}|h}$\footnotesize{acc}}} \label{eq:diff_phihn_philin}
\end{equation}
\end{corollary}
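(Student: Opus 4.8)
The plan is to reduce the statement, at each fixed time, to a standard piecewise-linear nodal interpolation error estimate, and then to supply the required uniform second-order bound on $\phih$ from the accuracy and regularity assumptions. The crucial observation is that, by the very definition of $\philin$, we have $\philin(\cdot,t) = I_h^1 \phih(\cdot,t)$ for every $t \in \In{n}$; hence the difference $\phih - \philin$ is, at each time slice, precisely the spatial linear interpolation error of $\phih$. Since $\phih(\cdot,t)$ restricted to any $T \in \ThU$ is a polynomial of degree $q_s$, it is smooth on each element, and the classical elementwise estimate for the nodal linear interpolant on a shape-regular simplex gives
\[
\| D^m (\phih(\cdot,t) - I_h^1 \phih(\cdot,t)) \|_{\infty, T} \lesssim h_T^{2-m} \, \| D^2 \phih(\cdot,t) \|_{\infty, T}, \qquad m = 0,1,
\]
with a constant depending only on the shape-regularity of $\Th$ (hence independent of $h$, $\Delta t$ and the cut position).

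It then remains to bound $\| D^2 \phih \|_{\infty, \QhU}$ uniformly. This is where the assumptions enter: by the triangle inequality,
\[
\| D^2 \phih \|_{\infty, \QhU} \leq \| D^2 \phix \|_{\infty, \QhU} + \| D^2 (\phix - \phih) \|_{\infty, \QhU}.
\]
The first term is $\lesssim 1$ by the regularity bound $\| D^2 \phix \|_{\infty, \U} \lesssim 1$ of \cref{ass:lsetreg}, using that $\QhU \subset \U$ by construction of $\ThU$. For the second term I would invoke \cref{diff_phi_phihn} with $(m_s,m_t) = (2,0)$, which yields $\| D^2(\phix - \phih)\|_{\infty,\QhU} \lesssim h^{q_s-1} + \Delta t^{q_t+1}$; since $q_s \geq 1$ (as already required for $(m_s,m_t)=(2,0)$ to be admissible in \cref{diff_phi_phihn}) this is uniformly bounded on the fixed background domain. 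Hence $\| D^2 \phih \|_{\infty, \QhU} \lesssim 1$.

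Combining the two, taking the maximum of the elementwise estimate over $T \in \ThU$ and over $t \in \In{n}$, and recalling $\philin(\cdot,t) = I_h^1\phih(\cdot,t)$, gives $\| D^m(\phih - \philin)\|_{\infty, \QhU} \lesssim h^{2-m}$ for $m=0,1$, as claimed.

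I expect the only point requiring care -- rather than a genuine obstacle -- to be precisely this uniform-in-time second-derivative bound on $\phih$: the interpolation inequality itself is entirely standard, but its right-hand side must be controlled independently of $h$, $\Delta t$ and the cut configuration, which is exactly what the combination of \cref{ass:lsetreg} and \cref{diff_phi_phihn} delivers. As an alternative route one could instead start from the tensor-product decomposition \eqref{eq:decomp_phi_h}, writing $\phih - \philin = \sum_{i=0}^{q_t} \elli{i}(t)(\phihi{i} - \philini{i})$ and estimating each spatial interpolation error $\phihi{i} - I_h^1\phihi{i}$ separately; this, however, additionally requires the boundedness of the temporal Lebesgue constant $\sum_{i=0}^{q_t} |\elli{i}(t)| \lesssim 1$, so the fixed-time argument above is the cleaner of the two.
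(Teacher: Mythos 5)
Your proof is correct and follows exactly the argument the paper leaves implicit when it states this corollary as an immediate consequence of the definition $\philin(\cdot,t) = I_h^1\phih(\cdot,t)$: the standard elementwise nodal interpolation estimate on shape-regular simplices, combined with the uniform bound $\| D^2 \phih \|_{\infty,\QhU} \lesssim 1$ obtained from \cref{ass:lsetreg} and \cref{diff_phi_phihn} with $(m_s,m_t)=(2,0)$. You also correctly avoid the tempting triangle inequality through $\phix$, which would introduce a spurious $\Delta t^{q_t+1}$ term absent from the stated bound.
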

Also, the time derivative of $\philin$ is (at each time) the spatial interpolation of $\partial_t \phih$:
\begin{equation}
 (I_h^1 \partial_t \phih(\cdot, t))(x) = \sum_{i=0}^{q_t} (\partial_t \elli{i}(t)) ( I_h^1 \phihi{i}(x)) = \sum_{i=0}^{q_t} (\partial_t \elli{i}(t)) \philini{i}(x) = \partial_t \philin(x,t).
\end{equation}
The same holds for the time derivatives of the gradients of $\phih$ and $\philin$. These results imply together with \cref{diff_phi_phihn} the following result.
\begin{corollary} \label{diff_phihn_philin_dt}
It holds for $\bar \phi \in \{\phih, \phidt\}$, $m_s=0,1$ and $m_t=0,\dots, q_t+1$,
  \begin{align}
 \| D^{m_s} \partial_t^{m_t} (\bar \phi - \philin) \|_{\infty, \QhU} &\lesssim h^{2-m_s} , 
 \label{eq:phidt-philin} 
 \addtocounter{equation}{1}\tag{\theequation a-\texttt{$\phi^{\text{lin}}$\footnotesize{acc}}}    
 \\
 \| D^{m_s} \partial_t^{m_t} (\phix - \philin) \|_{\infty, \QhU} &\lesssim h^{2-m_s} + \Delta t^{q_t +1-m_t}.
 \addtocounter{equation}{0}\tag{\theequation b-\texttt{$\phi^{\text{lin}}$\footnotesize{acc}}}    
 \label{diff_grad_phi_philin_dt} 
\end{align}
\end{corollary}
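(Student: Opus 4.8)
The plan is to reduce both estimates to the purely spatial interpolation analysis behind \cref{diff_phihn_philin}, using that the (nodal) temporal interpolation commutes with the spatial operator $I_h^1$. Concretely, since $\philin = I_h^1 \phih$ and the temporal basis functions $\elli{i}(t)$ carry the full $t$-dependence, differentiating in time commutes with $I_h^1$, so that for $\bar\phi = \phih$
\begin{equation*}
 \partial_t^{m_t}(\phih - \philin) = (\id - I_h^1)\,\partial_t^{m_t}\phih .
\end{equation*}
Thus $D^{m_s}\partial_t^{m_t}(\phih - \philin)$ is exactly the $D^{m_s}$-interpolation error of the elementwise polynomial $\partial_t^{m_t}\phih(\cdot,t) \in \Wh{q_s}$. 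Invoking the elementwise linear interpolation estimate $\|D^{m_s}(\id - I_h^1)v\|_{\infty,T} \lesssim h^{2-m_s}\|D^2 v\|_{\infty,T}$, valid uniformly for $m_s \in \{0,1\}$, reduces the first bound to
\begin{equation*}
 \|D^2 \partial_t^{m_t}\phih\|_{\infty,\QhU} \lesssim 1, \qquad 0 \le m_t \le q_t+1 .
\end{equation*}

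To obtain this, I would split off the exact level set via $\partial_t^{m_t}\phih = \partial_t^{m_t}\phix + \partial_t^{m_t}(\phih - \phix)$. For $m_t = q_t+1$ there is nothing to prove, because $\phih$ is a polynomial of degree $q_t$ in time and hence $\partial_t^{q_t+1}\phih \equiv 0$. For $0 \le m_t \le q_t$ the smooth contribution obeys $\|D^2\partial_t^{m_t}\phix\|_{\infty,\QhU} \lesssim 1$, since its total order $2+m_t \le q_t+2$ does not exceed the regularity index $\lphi$ granted by \cref{ass:lphi:1}. The discrete contribution is where \cref{diff_phi_phihn} enters: for $m_t \in \{0,1\}$ the index pair $(2,m_t)$ lies in the admissible ``cross'' set, giving $\|D^2\partial_t^{m_t}(\phih - \phix)\|_{\infty,\QhU} \lesssim h^{q_s-1} + \Delta t^{q_t+1-m_t} \lesssim 1$ for $h, \Delta t$ small.

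The main obstacle is the discrete contribution for $2 \le m_t \le q_t$, since the mixed index $(2,m_t)$ falls outside the admissible range of \cref{diff_phi_phihn}. Here I would exploit the polynomial-in-time structure: with $\phih = \sum_i \elli{i}(t)\phihi{i}$, $\phihi{i} = \phih(\cdot,\ti{i})$, each extra temporal derivative is produced by differentiating the fixed-degree basis, so an inverse estimate in time trades $\partial_t$ for $\Delta t^{-1}$, while the nodewise spatial bound $\|D^2(\phih - \phix)(\cdot,\ti{i})\|_{\infty} \lesssim h^{q_s-1}$ (the admissible pair $(2,0)$) is available at every node. Playing this off against the $m_s = 0$ bounds $\|\partial_t^{m_t}(\phih - \phix)\|_{\infty} \lesssim \Delta t^{q_t+1-m_t}$ (the admissible pair $(0,m_t)$) through the interpolation inequality for time polynomials is the delicate estimate I expect to be the crux; the smooth-in-space semidiscrete surrogate $\phidt$ of \cref{diff_phidt_phihn} is naturally inserted here to decouple the spatial from the temporal error.

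Once $\|D^2\partial_t^{m_t}\bar\phi\|_{\infty,\QhU} \lesssim 1$ is in hand, the first estimate holds for $\bar\phi = \phih$, and for $\bar\phi = \phidt$ the decomposition $\phidt - \philin = (\phidt - \phih) + (\phih - \philin)$ produces only the additional term $\partial_t^{m_t}(\phidt - \phih)$, controlled by the same inverse-in-time argument applied to the $O(h^{q_s+1})$-small difference of \cref{diff_phid_phih}. The second estimate then follows from the first by the triangle inequality $\phix - \philin = (\phix - \phih) + (\phih - \philin)$: the first summand contributes $\|D^{m_s}\partial_t^{m_t}(\phix - \phih)\|_{\infty,\QhU} \lesssim h^{q_s+1-m_s} + \Delta t^{q_t+1-m_t}$ via \cref{diff_phi_phihn} (again reducing the mixed high-order indices to admissible ones through the time-polynomial structure when $m_s=1$, $m_t \ge 2$), whose temporal part supplies the extra $\Delta t^{q_t+1-m_t}$ on the right-hand side, while the second summand is the already-established $h^{2-m_s}$ bound.
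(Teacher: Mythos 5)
Your overall skeleton coincides with the paper's own (very terse) derivation: the paper proves this corollary only through the two sentences preceding it, namely the commutation identity $\partial_t^{m_t}\philin(\cdot,t) = I_h^1\,\partial_t^{m_t}\phih(\cdot,t)$ obtained from the tensor-product basis, combined with \cref{diff_phihn_philin} and \cref{diff_phi_phihn}. Your handling of $m_t=q_t+1$ (trivial, since both $\bar\phi$ and $\philin$ have degree $q_t$ in time) and of $m_t\in\{0,1\}$ (where the pair $(2,m_t)$ lies in the admissible ``cross'' of \cref{diff_phi_phihn}, so $\|D^2\partial_t^{m_t}\phih\|_{\infty,\QhU}\lesssim 1$ follows) is correct, and so is the final triangle inequality yielding \eqref{diff_grad_phi_philin_dt} from \eqref{eq:phidt-philin}. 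You have moreover correctly identified the point the paper glosses over: for $2\le m_t\le q_t$ the mixed index $(2,m_t)$ is inadmissible, so the bound on $D^2\partial_t^{m_t}\phih$ does not follow directly from the stated assumptions.

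The gap is that your proposal stops exactly there: the step you yourself call ``the delicate estimate I expect to be the crux'' is announced rather than executed, and the mechanism you sketch would not close. Trading each time derivative for $\Delta t^{-1}$ by an inverse inequality on the degree-$q_t$ time polynomial, played against the nodewise bounds $\|D^2(\phihi{i}-\phii{i})\|_{\infty}\lesssim h^{q_s-1}+\Delta t^{q_t+1}$ (or against $\|D^2(\phihi{i}-\phidti{i})\|_\infty\lesssim h^{q_s-1}$ from \eqref{diff_phid_phih:i}), leaves terms of the form $\Delta t^{-m_t}h^{q_s-1}$, and nothing in the paper bounds $\Delta t$ from \emph{below} in terms of $h$: \cref{ass:timefiner} caps $\Delta t$ only from above, and the paper explicitly advertises that strong anisotropy $\Delta t\ll h$ is permitted, under which such terms blow up. A route that does close for \eqref{diff_grad_phi_philin_dt} is to abandon the target $\|D^2\partial_t^{m_t}\phih\|_\infty\lesssim 1$ and split against the temporal interpolant of the \emph{exact} level set, $\partial_t^{m_t}(\phih-\philin)=(\id-I_h^1)\,\partial_t^{m_t}(\phih-\It{q_t}\phix)+(\id-I_h^1)\,\partial_t^{m_t}\It{q_t}\phix$: on the smooth second part one uses the $h^{2-m_s}\|D^2\cdot\|_\infty$ interpolation estimate (bounded via \cref{ass:lphi:1}), while on the rough first part one uses only the $L^\infty$-stability of $\id-I_h^1$ (plus one spatial inverse inequality when $m_s=1$) together with $\|\partial_t^{m_t}(\phih-\It{q_t}\phix)\|_{\infty,\QhU}\lesssim h^{q_s+1}+\Delta t^{q_t+1-m_t}$, which follows from the admissible pairs $(0,m_t)$ in \eqref{diff_phi_phihn_spaceD} and temporal interpolation of $\phix$. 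Note, however, that this produces $h^{2-m_s}+\Delta t^{q_t+1-m_t}$, i.e.\ the second estimate; for $2\le m_t\le q_t$ neither your inverse-in-time trade nor this stability splitting recovers the fully $\Delta t$-free bound \eqref{eq:phidt-philin}, so the case in which you expected the crux to lie is indeed where a genuine argument is still missing from your proposal.
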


The function $\philin$ induces the linear (in space) domains
\begin{equation}
\hypertarget{def:Omlin}{\Omlin{t}} \coloneqq \{ x \in \tOmega \, | \, \philin(x,t) < 0 \}, \textnormal{ and } \hypertarget{def:Qlinn}{\Qlin} \coloneqq {\textstyle \bigcup_{t \in \In{n}} } \Omlin{t} \times \{ t \}.
\end{equation}

We are now in a similar setting as for the merely spatial unfitted isoparametric FEM approach. We have a reference configuration with $\Qlin$ of lower order accuracy (in space) described by $\philin$ 
and a higher-order accurate description with $\phih$ which is however not directly suitable for numerical integration. 

The geometry approximation obtained by the parametric approach in this paper, as already in the spatial case in \cite{L_CMAME_2016,LR_IMAJNA_2018}, relies on capturing the cut topologies with $\philin$ and improving its geometrical accuracy with a carefully constructed mapping. However, capturing the cut topologies with $\philin$ requires resolution (in the mesh size) depending on the curvature of domain boundary and we hence assume in the following that $h$ is sufficiently small in that sense.

\section{Blending methods} \label{sec:blendings}
%
For the construction of a mapping from $\Qlin$ that approximates $\Qn{n}$ with higher-order accuracy we need to find a mapping $\ThehG$ that maps the spatial boundary $\partials \Qlin$ close to $\partials \Qn{n}$, and relatedly a mapping $\Psi^\Gamma$, mapping $\partials \Qlin$ exactly onto $\partials \Qn{n}$. Eventually, however, these mappings should be defined on the whole space-time slab $\tQn$. 
We will consider two different approaches for the construction and extension of the mapping for which we distinguish by the choice of a triple $(\ThG,\OG,\b)$, \hypertarget{def:ThG}{where $\ThG$ is a submesh of $\ThU$ of \emph{active elements}}, \hypertarget{def:OG}{$\OG$ is the corresponding domain} and \hypertarget{def:b}{$\b:\tQn \to [0,1]$} is a blending function.
\hypertarget{def:ThGQ}{Corresponding to $\ThG$ we further introduce the space-time submesh of active elements $\ThGQ \coloneqq \bigcup_{T \in \ThG} T \times \In{n}$ } and \hypertarget{def:QGn}{$\QGn \coloneqq \OG \times \In{n}$ as the corresponding space-time domain.}
For notational convenience we define according to $\b(x,t)$ the time restrictions to $\ti{i}$ by \hypertarget{def:bi}{$\bi{i}(x) \coloneqq \b(x, \ti{i})$ for $i=0,\dots,q_t$}.
\begin{enumerate}
  \item In a first approach -- the \textit{\FE~blending} -- we consider all space-time cut elements as the active elements $\ThG$ and construct in a first step $\Psi^\Gamma$ and $\ThehG$ on those. Afterwards, in a second step, we will use a FE~blending which is a continuous extension which vanishes within an $\mathcal{O}(h)$ layer of one element from the cut elements in order to obtain $\Psi = \mathcal{E} \Psi^\Gamma$, $\Theta_h = \mathcal{E} \ThehG$.
  \item In the second approach -- the \textit{smooth blending} --  we use a smooth function in the construction of the mapping to transition between the region where the mapping takes full effect to regions where the mapping is the identity. To accomodate for this, a wider stripe of elements -- depending on the width of a scalar blending function -- than merely the cut elements is chosen for the active elements $\ThG$ and the mappings $\Psi^\Gamma$ and $\ThehG$ are constructed in the same way as in the first step of the \FE~blending. Then, the second step of extension becomes trivial as outside of $\ThG$ the mappings are set to identity.
\end{enumerate}
We will unify the analysis of both cases by introducing two steps: The construction of a mapping on a set of \emph{active elements} and a \FE~extension. In the case of the \FE~blending the active set of elements is merely the set of cut space-time elements while in the second case it is the set of space-time elements where the scalar blending function is smaller than one.
Note that in both cases for sufficiently small $h$ the set of active elements lies within $\ThU$ and hence within $\Ubar$.

After defining a mapping on the active set of elements the \FE~extension is applied. However, it becomes trivial in the case of the smooth blending, as the mapping will be the trivial mapping on the boundary of the set of active elements already.

\begin{remark}[Problems purely posed on space-time cut elements]
  If problems are to be solved only on the space-time cut elements as it is the case for surface PDEs when treated with TraceFEM approaches, cf. e.g. \cite{sassreusken23, reusken2024analysis}, problems with blendings vanish and the \FE~blending is a very natural (and simple) choice. Furthermore, the additional more restrictive assumptions that we will introduce for the \FE~blending would not be needed for proper mappings only on the space-time cut elements.  
\end{remark}

\subsection{Finite element blending}
For the \emph{\FE~blending} only cut space-time elements, respectively their spatial counterparts, become \emph{active}.
For the time slice under consideration, $\In{n}$, the set of \emph{active} elements is 
\begin{equation}
\hypertarget{def:Thb1}{\ThbOne \coloneqq \{ T \in \Th \, | \, \exists t \in \In{n} ~ \partial \Omlin{t} \cap T \neq \varnothing \}}, \quad  \hypertarget{def:OgbOne}{\OGbOne = \bigcup \ThbOne.}
\end{equation}
We give an illustration for this active domain for two time step choices in \cref{fig:feblendregions}. We also show the domain of adjacent elements, which is the domain where the FE blending will decay. (More details about this will be given in \cref{sec:global_mappings}.)

\begin{figure}
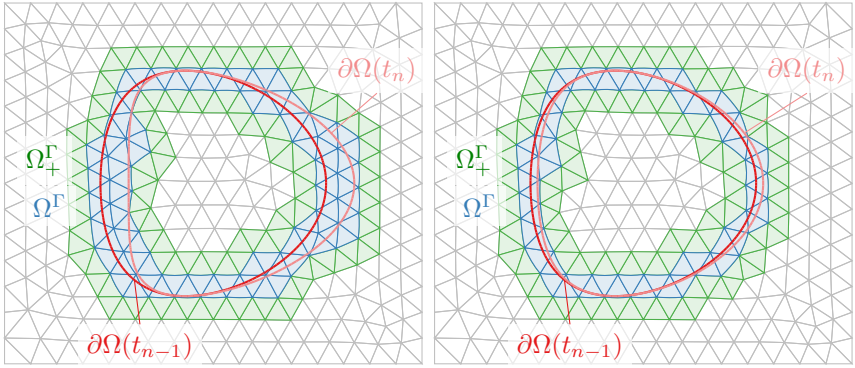

 \begin{center}
  \begin{tikzpicture}[ xscale=1.5, yscale=1.5]
   \begin{scope}[meshtrig/.style={fill=Set1-B!15!white, draw=Set1-B},
     Omplus/.style={fill=Set1-C!15!white, draw=Set1-C},
     backgrtrig/.style={draw=gray!50!white, thin}, Gamma_h/.style={draw=Set1-A, thick},
     Gamma_lin/.style={draw=none}]
     \input{mesh_examples/tikz_kite_mesh_feblend_cut_region_largedt0}
     \node[Set1-B,fill=white,opacity=0.7] at (-1.2,-0.2) {\phantom{$\Omega^\Gamma$}};
     \node[Set1-B] at (-1.2,-0.2) {$\Omega^\Gamma$};
     \node[Set1-E,fill=white,opacity=0.7] at (-1.25,0.2) {\phantom{$\OGp$}};
     \node[Set1-E] at (-1.25,0.2) {$\OGp$};

     \node[Set1-A,fill=white,opacity=0.7] (Omtnm1) at (-0.4,-1.45) {\phantom{$\partial \Omega(t_{n-1})$}};
     \node[Set1-A] (Omtnm1) at (-0.4,-1.45) {$\partial \Omega(t_{n-1})$};
     \node[Set1-A!50!white,fill=white,opacity=0.7] (Omtn) at (1.7, 1) {\phantom{$\partial \Omega(t_{n})$}};
     \node[Set1-A!50!white] (Omtn) at (1.7, 1) {$\partial \Omega(t_{n})$};

     \draw[Set1-A] (Omtnm1.north) -- (-0.45,-0.85);
     \draw[Set1-A!50!white] (Omtn.south) -- (1.3,0.45);
   \end{scope}
   \begin{scope}[meshtrig/.style={draw=none},
     Omplus/.style={draw=none},
     backgrtrig/.style={draw=none},
     Gamma_h/.style={draw=Set1-A!50!white, thick, opacity=1},
     Gamma_lin/.style={draw=none}]
     \input{mesh_examples/tikz_kite_mesh_feblend_cut_region_largedt1}
   \end{scope}
  \end{tikzpicture}
  \begin{tikzpicture}[ xscale=1.5, yscale=1.5]
   \begin{scope}[meshtrig/.style={fill=Set1-B!15!white, draw=Set1-B},
     Omplus/.style={fill=Set1-C!15!white, draw=Set1-C},
     backgrtrig/.style={draw=gray!50!white, thin}, Gamma_h/.style={draw=Set1-A, thick},
     Gamma_lin/.style={draw=none}]
     \input{mesh_examples/tikz_kite_mesh_feblend_cut_region_smalldt0}
     \node[Set1-B,fill=white,opacity=0.7] at (-1.2,-0.2) {\phantom{$\Omega^\Gamma$}};
     \node[Set1-B] at (-1.2,-0.2) {$\Omega^\Gamma$};
     \node[Set1-E,fill=white,opacity=0.7] at (-1.25,0.2) {\phantom{$\OGp$}};
     \node[Set1-E] at (-1.25,0.2) {$\OGp$};

     \node[Set1-A,fill=white,opacity=0.7] (Omtnm1) at (-0.4,-1.45) {\phantom{$\partial \Omega(t_{n-1})$}};
     \node[Set1-A] (Omtnm1) at (-0.4,-1.45) {$\partial \Omega(t_{n-1})$};
     \node[Set1-A!50!white,fill=white,opacity=0.7] (Omtn) at (1.7, 1) {\phantom{$\partial \Omega(t_{n})$}};
     \node[Set1-A!50!white] (Omtn) at (1.7, 1) {$\partial \Omega(t_{n})$};

     \draw[Set1-A] (Omtnm1.north) -- (-0.45,-0.85);
     \draw[Set1-A!50!white] (Omtn.south) -- (1.15,0.45);
   \end{scope}
   \begin{scope}[meshtrig/.style={draw=none},
     Omplus/.style={draw=none},
     backgrtrig/.style={draw=none},
     Gamma_h/.style={draw=Set1-A!50!white, thick, opacity=1},
     Gamma_lin/.style={draw=none}]
     \input{mesh_examples/tikz_kite_mesh_feblend_cut_region_smalldt1}
   \end{scope}
  \end{tikzpicture}
 \end{center} \vspace*{-0.25cm}
 \caption{Illustration of the discrete regions involved in the FE blending construction. We sketch the situation of a large (left) and small (right) time step. The solid and transparent red lines indicate the discrete interface at the beginning and end of the time step, respectively. The blue elements indicate the active mesh as it relates to the blending construction, $\protect\ThbOne$. The green elements indicate the additional adjacent elements where the FE blending will operate.}
 \label{fig:feblendregions} \vspace*{-0.25cm}
\end{figure}

For the blending function we set $\hypertarget{def:b1}{\bOne(x,t) \equiv 0}$ and obtain the triple
$(\ThG,\OG,\b) = (\ThbOne,\OGbOne,\bOne)$.
This blending approach has also been discussed in \cite{HLP2022}. It however comes at the price of additional assumptions on the regularity of the levelset function and a CFL-type time step restriction which we formulate in the next two assumptions
on the regularity of the levelset function and the time step size:
\begin{assumption}\label{ass:reg_feblend}
  We assume $\lphi \geq q_s + q_t + 2$ if the \FE~ blending is applied.
\end{assumption}
\begin{assumption}\label{ass:dtlesssimh_feblend}
  We assume $\Delta t \lesssim h$ if the \FE~ blending is applied.
\end{assumption}
An alternative that alleviates these restrictions is the \emph{smooth blending} discussed next.

\subsection{Smooth blending}
For the second option of the blending, the \emph{smooth blending}, we let the corresponding domains follow from a blending function $\bTwo\colon \tQn \to [0,1]$, which has the properties summarised in the following assumption.
\begin{assumption} \label{ass:blending}
\hypertarget{def:b2}{
 In the case of the \emph{smooth blending} we assume that there exists a blending function $\bTwo \in C^{\lphi}(\tQn;[0,1])$ with width $w_b \in \mathbb{R}, 0<w_b<\min\{\dOmega,\dU\}$ such that} 
 \begin{enumerate}
  \item On all cut elements, $\bTwo$ vanishes, i.e. $\bTwo(x,t)=0$ for all $x \in \OGbOne$, $t\in \In{n}$. 
  \item Sufficiently far away from the interface, $\bTwo$ takes the value 1: For all $t \in \In{n}$, for all $x \in \tOmega$ such that $\operatorname{dist}(x, \partial \Omlin{t})> w_b$, there is $\bTwo(x,t) = 1$.
  \item $\Vert \partial_t^{m_t} D^{m_s} \bTwo \Vert_{\infty,\tQn} \leq C_b$ with $C_b$ independent of $h$ and $\Delta t$,~$m_s + m_t \leq \lphi$.
 \end{enumerate}
\end{assumption}
This allows us to define the following counterparts of $ \ThbOne$ and $\OGbOne$:
\begin{equation}
 \hypertarget{def:Thb2}{\ThbTwo \coloneqq  \{ T \in \Th \, | \, \exists t \in \In{n} ~ \exists x \in T ~ \bTwo(x,t) < 1 \}}, \quad \hypertarget{def:OGbTwo}{\OGbTwo \coloneqq \bigcup \ThbTwo.}
\end{equation}
For the \emph{smooth blending} we obtain the triple 
$(\ThG,\OG,\b) = (\ThbTwo,\OGbTwo,\bTwo)$.

We illustrate the geometric domains for an example case in \Cref{fig:discr_dom_smooth_blend}.
\begin{figure}
 \begin{center}
   \begin{tikzpicture}[ xscale=1.5, yscale=1.5]
   \begin{scope}[meshtrig/.style={fill=Set1-B!15!white, draw=Set1-B},
     Omplus/.style={fill=Set1-C!15!white, draw=Set1-C},
     backgrtrig/.style={draw=gray!50!white, thin}, Gamma_h/.style={draw=none, thick},
     Gamma_lin/.style={draw=none}]
     \input{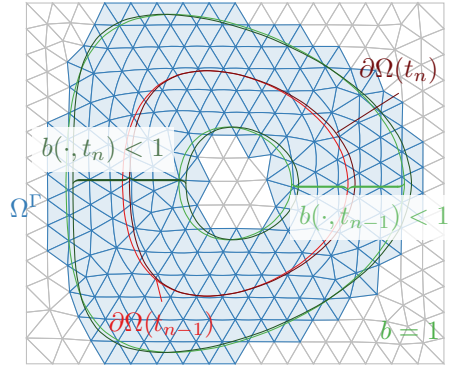}

       \draw[Set1-A, domain=-3.141:3.141,smooth,variable=\t]
plot ({ cos(\t r)+(1 - sin(\t r)^2)*0.25 },{sin(\t r)});

  \draw[Set1-C, domain=-3.141:3.141, smooth,variable=\t]
 plot ({ 1.5*cos(\t r)+(1 - (1.5*sin(\t r))^2)*0.25 },{ 1.5*sin(\t r)});
%

  \draw[Set1-C, domain=-3.141:3.141, smooth,variable=\t]
plot ({ 0.5*cos(\t r)+(1 - (0.5*sin(\t r))^2)*0.25 },{ 0.5*sin(\t r)});
%

       \draw[Set1-A!50!black, domain=-3.141:3.141,smooth,variable=\t]
plot ({ cos(\t r)+(1 - sin(\t r)^2)*0.3125 },{sin(\t r)});

  \draw[Set1-C!50!black, domain=-3.141:3.141, smooth,variable=\t]
plot ({ 1.5*cos(\t r)+(1 - (1.5*sin(\t r))^2)*0.3125 },{ 1.5*sin(\t r)});
%

  \draw[Set1-C!50!black, domain=-3.141:3.141, smooth,variable=\t]
plot ({ 0.5*cos(\t r)+(1 - (0.5*sin(\t r))^2)*0.3125 },{ 0.5*sin(\t r)});


      \node[Set1-B] at (-1.6,-0.2) {$\Omega^\Gamma$};
%
      \node[Set1-A] (Omtnm1) at (-0.4,-1.25) {$\partial \Omega(t_{n-1})$};
      \node[Set1-A!50!black] (Omtn) at (1.7, 1) {$\partial \Omega(t_{n})$};

      \draw[Set1-A] (Omtnm1.north) -- (-0.45,-0.85);
      \draw[Set1-A!50!black] (Omtn.south) -- (1.15,0.45);

      \draw[decorate, thick, pen colour={Set1-C},
    decoration = {calligraphic brace}] (1.75,0) -- (0.75,0) node [midway, xshift=0.35cm, yshift=-0.45cm, Set1-C, fill=white, fill opacity=0.8] {$b(\cdot, t_{n-1}) < 1$};

      \draw[decorate, thick, pen colour={Set1-C!50!black},
    decoration = {calligraphic brace}] (-1.1875,0) -- (-0.1875,0) node [midway, xshift=-0.35cm, yshift=0.45cm, Set1-C!50!black, fill=white, fill opacity=0.8] {$b(\cdot, t_{n}) < 1$};

    \node[Set1-C] at (1.8,-1.3) {$b=1$};
   \end{scope}
  \end{tikzpicture}
 \end{center} \vspace*{-0.25cm}
 \caption{Illustration of the discrete domain induced by a smooth blending function $b$. The surface is shown in red for the beginning and the end of a time step $I_n$. At the surface and the neighboring elements, $b=0$. Away from the surface, the value of $b$ increases until the green lines, where $b=1$ (dashed lines for $b=1$ at the end of the time step). The resulting region $\Omega^\Gamma$, constructed in line with \cref{ass:blending}, is shown in blue. Compare with \cref{fig:blending_def_examples} for plots of example functions $b$.} \vspace*{-0.25cm}
 \label{fig:discr_dom_smooth_blend}
\end{figure}

Compared to \cref{ass:dtlesssimh_feblend} we only require a much milder condition on the smallness of the time-step to ensure the well-posedness of the upcoming constructions:
\begin{assumption} \label{ass:timefiner}
In case of the smooth blending, we assume there is $\epsilon > 0$, s.t.
 \begin{equation}
 \Delta t^{q_t + 1} \leq C_{\Delta t - h} h^{1+ \epsilon}. \label{delta_t_smallness}
 \addtocounter{equation}{1}\tag{\theequation-\texttt{$\Delta t$\footnotesize{bnd}}}
\end{equation}
\end{assumption}
Note that this assumption allows for anisotropy in space and time resolution to a large extent.
We refer to \cref{fig:timefiner} for an illustration of the restrictions.
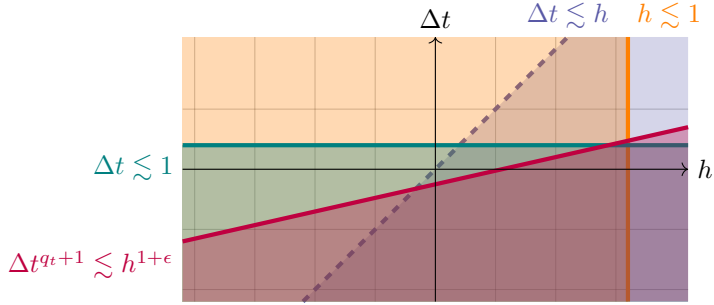
\begin{figure}[ht!]
  \begin{center}
\begin{tikzpicture}[domain=0:4,scale=0.8]
  \foreach \X in {-4,-3,-2,-1,0,1,2,3}
  {
    \foreach \x in {
      0.0
      }
    {
      \draw[black,opacity=0.25] (\X+\x,-2.2) -- (\X+\x,2.2);
    }
  }
  \foreach \Y in {-2,-1,0,1}
  {
    \foreach \y in {
      0.0
      }
    {
      \draw[black,opacity=0.25] (-4.2,\Y+\y) -- (4.2,\Y+\y);
    }
  }

  \fill[gray!70!blue, opacity=0.25]  (-2.2,-2.2) -- (2.2,2.2) node[above,opacity=1.0] {$\Delta t \lesssim h$} -- (4.2,2.2) -- (4.2,-2.2) -- cycle;
  \draw[gray!70!blue,ultra thick,dashed] (-2.2,-2.2) -- (2.2,2.2);

  \fill[orange, opacity=0.3]  (-4.2, -2.2) -| (3.2,2.2) node[above right,opacity=1.0] {$h \lesssim 1$} -| (-4.2,-2.2) -- cycle ;
  \draw[orange,ultra thick] (3.2,-2.2) -- (3.2,2.2);
  \fill[teal, opacity=0.3]  (-4.2, 0.4) node[below left,opacity=1.0] {$\Delta t \lesssim 1$} -| (4.2,-2.2)  -| (-4.2,-2.2) -- cycle;
  \draw[teal,ultra thick] (-4.2,0.4) -- (4.2,0.4);
  \fill[purple, opacity=0.3]  (-4.2,-1.2) node[below left,opacity=1.0] {$\Delta t^{q_t+1} \lesssim h^{1+\epsilon}$} -- (-4.2,-2.2) -| (4.2,0.7) -- cycle;
  \draw[purple,ultra thick] (-4.2,-1.2) -- (4.2,0.7);
  \draw[->] (-4.2,0) -- (4.2,0) node[right] {$h$};
  \draw[->] (0,-2.2) -- (0,2.2) node[above] {$\Delta t$};

\end{tikzpicture} \vspace*{-0.35cm}
\end{center}
\caption{Illustration of restrictions on time step and mesh size in a double logarithmic plot. The {\color{orange}orange area corresponds to the restriction $\Delta t$ sufficiently small}, {\color{teal}teal to $h$ sufficiently small} and {\color{purple} purple to $\Delta t^{q_t+1} \lesssim h^{1+\epsilon}$} in \cref{ass:timefiner} and {\color{gray!70!blue} grey-blue to $\Delta t \lesssim h$} in \cref{ass:dtlesssimh_feblend}.} \label{fig:timefiner}
\end{figure}
\begin{remark}[Comparison of blending options]
 The \FE~blending causes the mesh deformation to decay within a layer of one element, i.e. width $\mathcal{O}(h)$, leading to $\mathcal{O}(1/h)$ gradients. 
 This restricts the boundedness proofs of the upcoming analysis to space-time refinement strategies with $\Delta t \lesssim h$ which may be sufficient for many practical applications and hence the first choice. 
 To allow for a wider range of combinations of space and time refinements, the smooth blending is better-suited as it implies a decay of the mesh deformation on a fixed width, i.e. of order $\mathcal{O}(1)$. 
 Further, the \FE~blending leads to mesh deformations which are discontinuous along time slice boundaries, which necessitates a mesh transfer operation when the geometry is used in a \FE~simulation, cf. \cite{HLP2022}. 
 On the other hand, a continuous-in-time mesh deformation function can be obtained for the smooth blending variant.
 However, the major disadvantage of the smooth blending function is the larger bandwidth of active elements which scales like $h^{-d}$ while it scales only with $h^{1-d}$ for the \FE~blending.
\end{remark}

\section{Construction and analysis of mappings on active elements} \label{sec:mappings}
The section gathers aims at constructing and analysing a theoretical ideal mapping and a realisable mapping on the set of active elements. 
Several results that will provide the accuracy and regularity of the constructed discrete mapping are technical. To help the reader digesting the section despite many technical details, we start the section with an outline in the subsequent subsection. 
\subsection{Outline of the section} \label{sec:outline}
The goal of this section is to construct a realisable and continuous mapping $\Theta_h^\Gamma$ on the active mesh that maps $\partials \Qlin$ close to $\partials \Qn{n}$. 
First, we will construct an ideal mapping $\Psi^\Gamma$ that is geometrically exact so that 
$\partials \Qlin$ is exactly mapped to $\partials \Qn{n}$ and show important regularity and smallness estimates for this mapping. The difference between the discretely constructed mapping and the ideal mapping then allows us to characterise the accuracy of the discrete mapping. 
In particular, to obtain optimal estimates on the difference between the two mappings with respect to (first order) spatial and temporal derivatives, we introduce two further intermediate mappings that are semi-discrete, one discrete only in time, the other discrete only in space. 

Next, we will explain the procedure for the subsections. To provide an overview and orientation, we have outlined key results with allocation to the subsections in \cref{fig:analysis_diagram}.
The remainder of this section develops the results mentioned in the diagram, starting at the top left and proceeding clockwise.

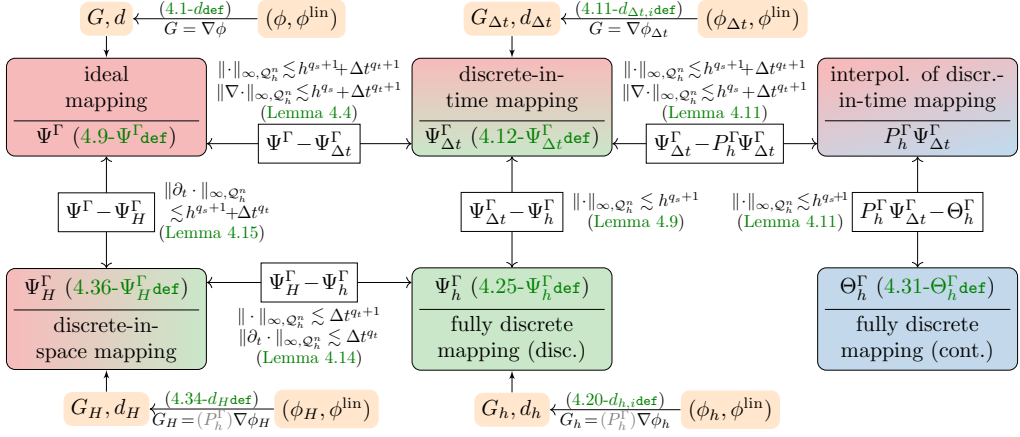
\begin{figure}
  \tikzset{external/export next=false}
  \hspace*{-0.8cm}
  \scalebox{0.825}{
  \begin{tikzpicture}[
    line/.style={draw, -latex'},
    block/.style={rectangle, rounded corners, draw, fill=gray!20, text width=8.5em, text centered, minimum height=3em}
]

\node[block, 
fill=Set1-A!30,
] (PsiG) {
ideal \\ mapping \\[1ex] \hrule ~\\[-0.5ex]
$\Psi^\Gamma_{\vphantom{\Delta t}}$ \eqref{eq:def:PsiG}
};

\node[block, 
shading = axis, left color=Set1-A!30, right color=Set1-C!30,shading angle=0,
right=3.3cm of PsiG] (Psidt) {
discrete-in-time mapping \\[1ex] \hrule ~\\[-0.5ex]
$\Psi_{\Delta t}^\Gamma$ \eqref{def:PsidtG}
};

\node[block, 
shading = axis, left color=Set1-A!30, right color=Set1-C!30,shading angle=90,
fill=teal!20, below=1.8cm of PsiG] (PsiH) {
$\Psi_{H}^\Gamma$ \eqref{eq:def:PsiHG}
    \\[1ex] \hrule ~\\[-0.5ex]
discrete-in-space mapping 
};

\node[block, fill=Set1-C!30, right=3.3cm of PsiH] (Psih) {
$\Psi_{h}^\Gamma$ \eqref{eq:def:PsihG}
\\[1ex] \hrule ~\\[-0.5ex]
fully discrete mapping (disc.)
};

\node[block, fill=Set1-B!30, right=3.3cm of Psih, scale=1] (Thetah) {
$\Theta_{h}^\Gamma$ 
\eqref{eq:def:ThehG}
\\[1ex] \hrule ~\\[-0.5ex]
fully discrete mapping (cont.)
};

\node[block, 
shading = axis, left color=Set1-A!30, right color=Set1-B!30,shading angle=15,
right=3.3cm of Psidt, scale=1] (PhGPsidt) {
interpol. of discr.-in-time mapping
\\[1ex] \hrule ~\\[-0.5ex]
$P_h^\Gamma \Psi_{\Delta t}^\Gamma$
};

\node[fill=orange!20, rounded corners, above=0.35cm of PsiG] (toPsiG) {
$G,d$
};

\node[fill=orange!20, rounded corners, right=2cm of toPsiG] (totoPsiG) {
$(\phi, \phi^{\text{lin}})$
};

\node[fill=orange!20, rounded corners, below=0.35cm of PsiH] (toPsiH) {
$G_H, d_H$
};

\node[fill=orange!20, rounded corners, right=2cm of toPsiH] (totoPsiH) {
$(\phi_H, \phi^{\text{lin}})$
};

\node[fill=orange!20, rounded corners, above=0.35cm of Psidt] (toPsidt) {
$G_{\Delta t}, d_{\Delta t}$
};

\node[fill=orange!20, rounded corners, right=2.2cm of toPsidt] (totoPsidt) {
$(\phi_{\Delta t}, \phi^{\text{lin}})$
};

\node[fill=orange!20, rounded corners, below=0.35cm of Psih] (toPsih) {
$G_h, d_h$
};

\node[fill=orange!20, rounded corners, right=2.1cm of toPsih] (totoPsih) {
$(\phi_{h}, \phi^{\text{lin}})$
};

\path[line,->] (totoPsiG) -- 
node[midway, inner sep=0cm, below=-0.3cm, scale=0.8](totPsiG){
    \begin{minipage}{2cm}
      \centering 
      \eqref{ddef}\\
      $G = \nabla \phi$
    \end{minipage}
} (toPsiG);

\path[line,->] (totoPsidt) -- 
node[midway, inner sep=0cm, below=-0.3cm, scale=0.8](totPsidt){
    \begin{minipage}{2.2cm}
      \centering 
      \eqref{ddtdef}\\
      $G = \nabla \phi_{\Delta t}$
    \end{minipage}
} (toPsidt);

\path[line,->] (totoPsiH) -- 
node[midway, inner sep=0cm, below=-0.3cm, scale=0.8](totPsiH){
    \begin{minipage}{2.5cm}
      \centering 
      \eqref{def_dH}\\
      $G_H \!=\! {\color{gray}(P_h^\Gamma)} \nabla \phi_H$
    \end{minipage}
} (toPsiH);

\path[line,->] (totoPsih) -- 
node[midway, inner sep=0cm, below=-0.3cm, scale=0.8](totPsih){
    \begin{minipage}{2.5cm}
      \centering 
      \eqref{def_dhi}\\
      $G_h\! =\! {\color{gray}(P_h^\Gamma)} \nabla \phi_h$ 
    \end{minipage}
} (toPsih);

\path[line,<->] (PsiG.340) -- 
node[midway, above=0.25cm,scale=0.8](PsiGdt){
    \begin{minipage}{4cm}
        \centering 
        $\Vert \!\cdot\! \Vert_{\infty,\mathcal{Q}_h^n} \!\lesssim\! h^{q_s+1} \!\!\!+\! \Delta t^{q_t + 1}$ \\
        $\Vert \nabla \! \cdot\! \Vert_{\infty,\mathcal{Q}_h^n} \!\lesssim\! h^{q_s} \!+\! \Delta t^{q_t + 1}$ \\
        (\cref{lem:PsidtG-PsiG})
    \end{minipage}
} 
node[midway,draw=black,fill=white]{
    $\Psi^\Gamma\!-\!\Psi^\Gamma_{\Delta t}$
} 
(Psidt.200);

\path[line,<->] (PsiH.20) -- 
node[midway, below=0.3cm,scale=0.8](PsihH){
    \begin{minipage}{4cm}
        \centering 
        $\Vert \cdot \Vert_{\infty,\mathcal{Q}_h^n} \lesssim \Delta t^{q_t + 1}$ \\
        $\Vert \partial_t \cdot \Vert_{\infty,\mathcal{Q}_h^n} \lesssim \Delta t^{q_t}$ \\
        (\cref{lem:PsiHG-PsihG})
    \end{minipage}
} 
node[midway,draw=black,fill=white]{
    $\Psi_H^\Gamma\!-\!\Psi_h^\Gamma$
} 
(Psih.160);

\path[line,<->] (PsiG.270) -- 
node[midway, right=0.cm, scale=0.8](PsiGH){
    \begin{minipage}{4cm}
        \centering 
        $\Vert \partial_t \cdot \Vert_{\infty,\mathcal{Q}_h^n}$~~~  \\
        ~~~$\lesssim\! h^{q_s+1}\!\! +\!\! \Delta t^{q_t}$ \\
        (\cref{Psi_H_Psi_diff_lemma}) 
    \end{minipage}
} 
node[pos=0.5,draw=black,fill=white,rotate=0](L){
    $\Psi^\Gamma\!-\!\Psi_H^\Gamma$
} 
(PsiH.90);

\path[line,<->] (Psidt.270) -- 
node[midway, right=0.3cm, scale=0.8](Psihdt){
    \begin{minipage}{4cm}
        \centering 
        $\Vert \!\cdot\! \Vert_{\infty,\mathcal{Q}_h^n} \lesssim h^{q_s+1}$ \\
        (\cref{lem:PsidtG-PsihG}) 
    \end{minipage}
} 
node[midway,draw=black,fill=white,rotate=0](R){
    $\Psi_{\Delta t}^\Gamma\!-\!\Psi_h^\Gamma$
} 
(Psih.90);

\path[line,<->] (PhGPsidt.270) -- 
node[midway, left=0.35cm, scale=0.8](PhGPsidtQ){
    \begin{minipage}{4cm}
        \centering 
        $\Vert \!\cdot\! \Vert_{\infty,\mathcal{Q}_h^n} \!\lesssim\! h^{q_s\!+\!1}$ \\
        (\cref{lem:PhGPsidtG}) 
    \end{minipage}
} 
node[midway,draw=black,fill=white,rotate=0](R){
    $P_h^\Gamma \Psi_{\Delta t}^\Gamma\!-\!\Theta_h^\Gamma$
} 
(Thetah.90);


\path[line] (toPsiG) -- (PsiG);
\path[line] (toPsiH) -- (PsiH);
\path[line] (toPsidt) -- (Psidt);
\path[line] (toPsih) -- (Psih);


\path[line,<->] (Psidt.340) -- 
node[midway, above=0.25cm,scale=0.8](PsidtPhG){
    \begin{minipage}{4cm}
        \centering 
        $\Vert \!\cdot\! \Vert_{\infty,\mathcal{Q}_h^n} \!\lesssim\! h^{q_s+1} \!\!\!+\! \Delta t^{q_t + 1}$ \\
        $\Vert \nabla \! \cdot\! \Vert_{\infty,\mathcal{Q}_h^n} \!\lesssim\! h^{q_s} \!+\! \Delta t^{q_t + 1}$ \\
        (\cref{lem:PhGPsidtG})                
    \end{minipage}
} 
node[midway,draw=black,fill=white]{
    $\Psi^\Gamma_{\Delta t}\!-\!P_h^\Gamma \!\Psi^\Gamma_{\Delta t}$
} 
(PhGPsidt.200);

\end{tikzpicture}
  }
  \hspace*{-1.2cm}
  \vspace*{-0.5cm}
  \caption{Outline of the construction and analysis of the mappings on active elements. The ideal mapping is on the top left corner of the diagram while the realizable mapping used in practice is on the bottom right corner. The intermediate mappings allow for a step-by-step analysis of the error (in different norms) between the ideal and the realizable mapping.} 
  \label{fig:analysis_diagram}
  \vspace*{-0.5cm}
\end{figure}

In \cref{sec:ideal_mapping} will first construct an ideal space-time mapping on active elements. The mapping is based on the following procedure: We first fix a \emph{spatial search direction} $G: \QGn \to \mathbb{R}^d$ based on $\phix$, so that for every point $(x,t)$ in $\QGn$ we can determine a \emph{distance function} $d: \QGn \to \mathbb{R}$ as the solution of a scalar one-dimensional problem that involves $\philin$, $\phix$ and the blending $\b$. 
After showing well-posedness of the local problems and regularity of $d$
in \cref{lem:d}
we can define the ideal mapping $\Psi^\Gamma$ and deduce regularity properties, cf. \eqref{eq:def:PsiG} and \cref{cor:PsiG}.

In a next step, in \cref{sec:discrete-in-time-mapping} we consider an analogue mapping that is discrete-in-time based on spatial problems on a set of time points $\{\ti{i}\}_{i=0}^{q_t}$ and the level set function $\phix$ replaced by the time-discrete approximation $\phidt$, yielding a discrete-in-time search direction $G_{\Delta t}$, a distance function $d_{\Delta t}$ and bounds on $d_{\Delta t}$ that are independent of $\Delta t$, cf. \cref{lem:ddt}. The resulting mapping $\Psi_{\Delta t}^\Gamma$ is then defined in \eqref{def:PsidtG} and proximity to $\PsiG$
in the $\infty$-norm as well as in the $\infty$-norm for the gradients
is shown in \cref{lem:PsidtG-PsiG}.

Both previous constructions are theoretical mappings needed only in the analysis. The fully discrete mapping $\Theta_h^\Gamma$, respectively a proxy $\Psi_h^\Gamma$ is then constructed in \cref{sec:discrete_mapping}. 
For the construction we then consider the available approximation $\phih$ of $\phix$ and construct a proper discrete search direction $G_h$ approximating $G$, cf. \cref{G_Gh_diff_lemma}. As in \cref{sec:discrete-in-time-mapping} the construction is reduced to a set of time points $\{\ti{i}\}_{i=0}^{q_t}$.
In contrast to the previous level set functions $\phix$ and $\phidt$ and the corresponding search directions $G$ and $G_{\Delta t}$, both $\phih$ and $G_h$ are no longer globally smooth, but only piecewise smooth and at most globally continuous. As in the spatial problem, cf. \cite{L_CMAME_2016}, to obtain a well-posed and computational feasible one-dimensional problem for each point a polynomial extension of $\mathcal{E}_T\phih$ instead of $\phih$ is considered and shown to be sufficiently accurate, cf. \cref{diff_phi_phihn_spaceD_special}, which allows the problems to determine the distance function can be solved element by element.
In \cref{d_h_lemma} we show well-posedness and smallness of the discrete distance function that allows us to define a discrete mapping $\Psi_h^\Gamma$ in \eqref{eq:def:PsihG}.
In \cref{lem:PsidtG-PsihG} we show (optimal) proximity of $\Psi_h^\Gamma$ to $\Psi_{\Delta t}^\Gamma$ in the $\infty$-norm. Together with the same proximity results of $\Psi_{\Delta t}^\Gamma$ to $\PsiG$ in \cref{lem:PsidtG-PsiG} this yields proximity of $\Psi_h^\Gamma$ to $\PsiG$, cf. \cref{Psi_hi_Psi_diff_lemma}.

As a consequence of the polynomial extension applied in the construction of the distance function $\Psi_h^\Gamma$ will in general be discontinuous across element boundaries. To obtain a continuous mapping, we apply an Oswald-type projection $P_h^\Gamma$ to define $\Theta_h^\Gamma$ in \eqref{eq:def:ThehG} which only introduces a minor spatial perturbation, cf. \cref{lem:PhGPsidtG}. Proximity results for $\Theta_h$ to $\PsiG$, including optimal bounds for the gradient, are shown in \cref{lem:Theta_h_Psi_Gamm_diff_bound}, a core results of this section.
These bounds provide good bounds for the mapping and its gradient, but bounds for the time derivative derived from it would not be robust in an anisotropy between spatial and temporal resolution.

To fix this, an additional route in estimating the proximity of $\Theta_h^\Gamma$ to $\PsiG$ w.r.t. the time derivative is taken in \cref{sec:discrete-in-space-mapping}. A discrete-in-space, but continuous-in-time, mapping is constructed based on the same construction principles as for the fully discrete mapping in \cref{sec:discrete_mapping}, but with $\phih$ replaced by $\phiH$, yielding the search direction $G_H$ and a distance function $d_H$ with similar bounds as for $d_h$, cf. \cref{d_H_lemma}. The correspondingly defined mapping $\Psi_H^\Gamma$, cf. \eqref{eq:def:PsiHG} is close to $\Psi_h^\Gamma$ with bounds that are independent of the spatial resolution, cf. \cref{lem:PsiHG-PsihG}. Furthermore, $\Psi_H^\Gamma$ offers sufficient proximity to $\PsiG$ also w.r.t. the time derivative, cf. \cref{Psi_H_Psi_diff_lemma}. Essentially, putting these results together then yields a similar proximity bound for $\Theta_h$ to $\PsiG$ which is shown in the final main result of this section, \cref{lem:Theta_h_Psi_Gamm_diff_bound_first_dt}.




\subsection{Construction of ideal mapping on active elements} \label{sec:ideal_mapping}
As a first step, we introduce a mapping $\PsiG$ which maps $\Qlin$ close to $\Qn{n}$. Fix some space-time point $(x,t) \in \QGn$, we want to match the levelset value of $(1- \b(x,t)) \philin(x,t) + \b(x,t) \phix(x,t)$ with the levelset value of $\phix$ at a point $y \in \tOmega$. For this, we define the (purely spatial) \emph{search direction} \hypertarget{def:G}{ $\G(x,t) \coloneqq \nabla \phix(x,t)$}, 
and the \emph{distance function} 
\hypertarget{def:d}{$\d(x, t) \colon \OG \to \mathbb{R}$} 
(this implies $\d \colon \QGn \to \mathbb{R}$) 
to be the (in absolute value) smallest number s.t. 
\begin{equation}
 \!\! \phix(x + \d(x,t) \G(x,t), t) = (1- \b(x,t)) \philin(x,t) + \b(x,t) \phix(x,t) ~~~ \forall (x,t) \in \QGn. \label{ddef}
 \addtocounter{equation}{1}\tag{\theequation-\texttt{$d$\footnotesize{def}}} 
\end{equation}
We correspondingly define \hypertarget{def:di}{$\di{i}(x) \coloneqq \d(x,\ti{i})$} and \hypertarget{def:Gi}{$\Gi{i}(x) \coloneqq \G(x,\ti{i})$.}

Note that for $(x,t)$ such that $b(x,t)=1$, the trivial solution is $\d(x,t) = 0$. On the other side, for $b(x,t) = 0$, the condition reduces to
\begin{equation*}
 \phix(x + \d(x,t) \G(x,t), t) = \philin(x,t)\quad \textnormal{for all } (x,t) \in \QGn,
\end{equation*}
which is very similar to the construction in \cite[Eq. (3.1)]{LR_IMAJNA_2018} for the merely spatial case. We illustrate this construction for a space-time point where $\philin(x,t) =0$ and $b(x,t)=0$ (this case could be called ``full blending'') in \cref{ddef_illu}: Starting from a point $(x,t)$ with some value of $\philin(x,t)$, in this case 0, we calculate the search direction $G(x,t)$ and move so far along this direction until we obtain the same value in $\phix(\dots,t)$.

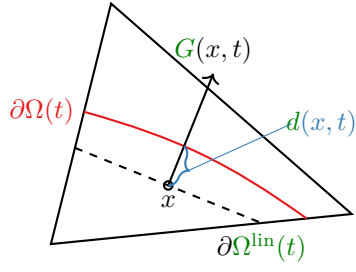
\begin{figure} \centering
 \begin{tikzpicture}[scale=0.8]
  \draw[thick] (0,0) -- (1,4) -- (5,0.5) -- cycle;
  \draw[thick, dashed] (0.4,4*0.4) -- (0.7*5, 0.7*0.5) node[below] {$\partial \Omlin{t}$};
  \draw[thick, Set1-A, name path=exact] (0.55,4*0.55) node[left] {$\partial \Omega(t)$} to [bend left=10] (0.85*5, 0.85*0.5);
  \draw[thick] (0.5*0.4 +0.5*0.7*5, 0.5*4*0.4+0.5*0.7*0.5) circle (2pt) node[below] {$x$};
  \draw[thick, ->, name path=searchdir] (0.5*0.4 +0.5*0.7*5, 0.5*4*0.4+0.5*0.7*0.5) -- + ($0.6*(4*0.4 - 0.7*0.5, -0.4 + 0.7*5)$) node[above] {$\G(x,t)$};
  \pgfdecorationsegmentamplitude=5pt
  \draw[thick, Set1-B, decorate, decoration=brace, name intersections={of = exact and searchdir}]  (intersection-1) -- node[midway, name=LP]{} (0.5*0.4 +0.5*0.7*5, 0.5*4*0.4+0.5*0.7*0.5);
  \node[Set1-B] (dL) at (4.5,2) {$\d(x,t)$}; \draw[Set1-B] ($(dL) + (-0.5,0)$) -- ($(LP)+(0.2,-0.08)$);
 \end{tikzpicture} \vspace*{-0.3cm}
 \caption{Illustration of definition of the function $\protect\d$ at a space-time point with $\protect\philin(x,t)=0$.}
 \label{ddef_illu} \vspace*{-0.25cm}
\end{figure}

The function $d$ has the following fundamental properties:
\begin{lemma} \label{lem:d}
For $h$ and $\Delta t$ sufficiently small, the relation \cref{ddef} defines a unique $\d(x,t)$, and $d \in C^0(\QGn) \cap C^{\lphi}(\ThGQ) $. Furthermore, there holds
\begin{subequations} 
  \label{eq:d}
  \begin{align}
    \| \partial_t^{m_t} D^{m_s} \d\|_{\infty, \QGn} & \!\lesssim h^{2-m_s} \!+\! \Delta t^{q_t + 1 - m_t}~ \text{for } m_s, m_t \in \{0,1\},
    \tag{\ref{eq:d}a-\texttt{$d$\footnotesize{bnd}}}
    \label{eq:da}
    \\
  \| \partial_t^{m_t} \d \|_{\infty,\QGn} & \! \lesssim h^2 \!+\! \Delta t^{q_t+1-m_t}~ \text{for } m_t \in \{0,..,q_t+1\}, 
   \tag{\ref{eq:d}b-\texttt{$d$\footnotesize{bnd}}}
   \label{eq:db}
    \\
   \| \partial_t^{m_t} D^{m_s} \d\|_{\infty,\ThGQ} & \!\lesssim 1 ~ \textnormal{for } m_s\!=\!0,..,q_s\!+\!1, m_t\!=\!0,..,q_t+1, m_t\!+\!m_s \!\leq\! \lphi,
  \tag{\ref{eq:d}c-\texttt{$d$\footnotesize{bnd}}}
  \label{eq:dc}
\end{align}
\end{subequations}
\end{lemma}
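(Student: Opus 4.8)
The plan is to read \eqref{ddef} as a scalar implicit equation for $s=\d(x,t)$ and to run a quantitative implicit-function argument, with the non-degeneracy of the linearization supplied by the weak signed distance property \eqref{signed_dist_like_prop}. Fixing $(x,t)\in\QGn$, I introduce
\[
  g(s) := \phix(x + s\,\G(x,t),t), \qquad
  v(x,t) := (1-\b(x,t))\,\philin(x,t) + \b(x,t)\,\phix(x,t),
\]
so that \eqref{ddef} becomes $g(\d(x,t)) = v(x,t)$. Since $\G=\nabla\phix$, property \eqref{signed_dist_like_prop} says exactly that $|g(s)-g(\tilde s)|\simeq|s-\tilde s|$ as long as $x+s\G\in\Ust{t}$; hence $g$ is strictly monotone there, and its slope $g'(s)=\nabla\phix(x+s\G,t)\cdot\G$ is bounded away from zero uniformly by \eqref{eq:cnphi}. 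Because $v-g(0)=(1-\b)(\philin-\phix)$, the accuracy estimate \eqref{diff_grad_phi_philin_dt} (with $m_s=m_t=0$) gives $|v-g(0)|\lesssim h^2+\Delta t^{q_t+1}$; together with strict monotonicity and the uniform slope this produces, for $h,\Delta t$ small, a unique smallest-modulus root $\d(x,t)$ with $|\d(x,t)|\lesssim h^2+\Delta t^{q_t+1}$, which in particular keeps $x+\d\G$ inside $\Ust{t}$. This settles well-posedness and the lowest-order cases of \eqref{eq:da}--\eqref{eq:db} in one stroke.

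For the regularity statement I would invoke the implicit function theorem: the map $(s,x,t)\mapsto g(s)-v$ is $C^{\lphi}$ on the interior of each space-time element of $\ThGQ$ (there $\philin$ is a polynomial and $\phix,\b\in C^{\lphi}$), and its $s$-derivative is bounded away from zero by the previous step, so $\d\in C^{\lphi}(\ThGQ)$ and the IFT supplies explicit formulas for its derivatives. Global continuity $\d\in C^0(\QGn)$ then follows because the right-hand side $v$ is globally continuous — only the \emph{gradient} of $\philin$ jumps across element faces, not $\philin$ itself — so $\d$ inherits continuity across faces.

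To obtain the bounds I differentiate the identity $\phix(x+\d\G,t)=v$ and solve for the highest-order derivative of $\d$, always dividing by the non-degenerate coefficient $\nabla\phix(x+\d\G,t)\cdot\G\simeq 1$. For a single spatial derivative, writing $y=x+\d\G$, this yields $(\partial_{x_j}\d)\,\nabla\phix(y)\!\cdot\!\G = \partial_{x_j} v - \partial_{x_j}\phix(y) - \d\,\nabla\phix(y)\!\cdot\!\partial_{x_j}\G$, and the crucial cancellation is that $\partial_{x_j} v - \partial_{x_j}\phix(y)$ reduces, after a first-order Taylor expansion of $\nabla\phix$ about $x$ (controlled by $\|D^2\phix\|_{\infty,\U}\lesssim 1$), to $(1-\b)\,\partial_{x_j}(\philin-\phix)$ plus terms of order $|\d|$ and $|\partial_{x_j}\b|\,|\philin-\phix|$. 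Estimating these with \eqref{diff_grad_phi_philin_dt} for $m_s=1$, the already-established bound on $|\d|$, and $\|D\b\|_{\infty}\lesssim 1$ gives $|\partial_{x_j}\d|\lesssim h+\Delta t^{q_t+1}$, i.e.\ \eqref{eq:da} for $(m_s,m_t)=(1,0)$; the time derivative is identical, now using $\|\partial_t\nabla\phix\|_{\infty,\U}\lesssim 1$ and \eqref{diff_grad_phi_philin_dt} with $m_t=1$, giving $|\partial_t\d|\lesssim h^2+\Delta t^{q_t}$. The mixed case, the higher time derivatives \eqref{eq:db} (where \cref{ass:lphi:1} guarantees enough temporal smoothness of $\phix$), and the uniform bounds \eqref{eq:dc} on element interiors all follow by induction on the total derivative order: each step isolates the top derivative of $\d$ through the non-degenerate coefficient and bounds the remaining, by Fa\`a di Bruno finitely many, terms using the induction hypothesis together with the regularity and accuracy estimates of \cref{ass:lsetreg}, \cref{diff_phihn_philin_dt} and \cref{ass:blending}; the restriction $m_s+m_t\le\lphi$ is precisely the smoothness that $\phix$ and $\b$ provide.

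The main obstacle I anticipate is the bookkeeping in this induction, namely checking that every term generated by differentiating the composition $\phix(x+\d\G,t)$ to high order either carries a factor $\philin-\phix$ (which supplies the powers of $h^2$, decremented by one per spatial derivative) or a derivative of $\d$ already controlled at lower order, so that none of the small factors is spuriously lost. The structural reason the argument closes is twofold: the weak signed distance property keeps the linearization $\nabla\phix(y)\cdot\G$ uniformly invertible independently of the cut position, and the convex-combination form of $v$ ensures that its leading parts cancel against $\phix(y)$, leaving only the accuracy difference $\philin-\phix$ to drive the estimates.
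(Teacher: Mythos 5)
Your proposal is correct and follows essentially the same route as the paper's proof: a non-degenerate scalar implicit equation whose linearization $\nabla\phix(y)\cdot\G \simeq 1$ is kept uniformly invertible via the weak signed distance property \eqref{signed_dist_like_prop}, implicit differentiation that isolates the top-order derivative of $\d$ and leaves only terms carrying $\philin-\phix$ or lower-order derivatives of $\d$, and an induction with Leibniz/Fa\`a di Bruno bookkeeping together with the elementwise implicit function theorem for \eqref{eq:db} and \eqref{eq:dc}. The only cosmetic deviation is the well-posedness step, where you argue via strict monotonicity of $g$ (bi-Lipschitz from \eqref{signed_dist_like_prop}), which also yields $|\d|\simeq|v-g(0)|\lesssim h^2+\Delta t^{q_t+1}$ directly, whereas the paper uses a Taylor expansion, the sign change $g(\pm \alpha_0 h)\lessgtr 0$ on an $\mathcal{O}(h)$ interval (which is why it invokes $\Delta t^{q_t+1}\lesssim h^{1+\epsilon}$ from \cref{ass:timefiner}) and the intermediate value theorem -- the conclusions coincide.
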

\begin{proof}
  The proof follows similar ideas as the proof of \cite[Lemma 3.1]{LR_IMAJNA_2018}. Differences originate from the fact that $\philin$ deviates from $\phix$ not only by spatial but also temporal errors and that we have to involve the blending function $\b(x,t)$ which was not considered in \cite{LR_IMAJNA_2018}. Furthermore, with the time-dependence more derivate terms become involved. We treat the proof of some basic properties in the main part and refer to appendices for the more technical proofs.

  \underline{Well-posedness of $d$}:\\ 
 Define for fixed $\alpha_0 > 0$ on $\alpha \in [-\alpha_0 h, \alpha_0 h]$ at the point $(x,t) \in \QGn$ the function
 \begin{align}
  g(\alpha) &\coloneqq \phix(x + \alpha \G(x,t),t) - (1- \b(x,t)) \philin(x,t) - \b(x,t) \phix(x,t). \label{g_def} \\
  & = \phix(x + \alpha \G(x,t),t) - \philin(x,t) + \b(x,t) (\philin(x,t) - \phix(x,t)). \nonumber
 \end{align}
 As $\phix \in C^{\lphi}(\QU)$, $g \in C^{\lphi}([-\alpha_0 h, \alpha_0 h])$ for $h\leq h_0$ sufficiently small.
 This allows us to apply the Taylor series theorem, leading to
 \begin{equation}
  g(\alpha) = g(0) + g'(0) \alpha + \nicefrac{1}{2}~ g''(\xi) \alpha^2 \quad \textnormal{ for some } \xi \in [\min\{0, \alpha\},\max\{0, \alpha\}]. \label{g_Taylor}
 \end{equation}
 Combining \eqref{diff_phi_phihn_spaceD} from \cref{diff_phi_phihn} and \eqref{diff_grad_phi_philin_dt} from \cref{diff_phihn_philin}, we obtain 
 at point $(x,t)$ that $|g(0)| = | (1-b) (\philin - \phix)| \lesssim  h^2 + \Delta t^{q_t + 1} \lesssim h^{1+\epsilon}$ (with \eqref{delta_t_smallness} from \cref{ass:timefiner}). From the chain rule, we obtain furthermore $g'(0)= \| \nabla \phix \|_2^2$. We now claim that there is a unique $\alpha \in [-\alpha_0 h, \alpha_0 h]$ such that $g(\alpha) = 0$. First assume that there are two such $\alpha$ and denote them as $\alpha_1$ and $\alpha_2$. Then they are equal by \cref{g_def} and \cref{signed_dist_like_prop} as at 
 $$0 = |g(\alpha_1) - g(\alpha_2)| \stackrel{\eqref{g_def}}{=} |\phix(x+\alpha_1 \G(x,t),t) - \phix(x+\alpha_2 \G(x,t),t)| \stackrel{\eqref{signed_dist_like_prop}}{\simeq} | \alpha_1 - \alpha_2 |.$$ 
 For existence, we see that with \eqref{eq:cnphi} and $|g(0)| \leq (\alpha_0 h)^{1+\epsilon}$
 \begin{align} \nonumber
  g(\alpha_0 h) & \!\stackrel{\eqref{g_Taylor}}{=}\!  g(0) +  \| \nabla \phix \|_2^2 \alpha_0 h + \nicefrac{1}{2}~ g''(\xi) \alpha_0^2 h^2 
  \geq \cnphi{2} (\alpha_0 h) - c (\alpha_0 h)^{1+\epsilon} \!\!-\! \nicefrac{1}{2}~c  (\alpha_0 h)^2  
  \end{align}
and hence $g(\alpha_0 h)>0$ for $h \leq h_0$ sufficiently small and by the same argument, $g(-\alpha_0 h)<0$. Then with the continuity of $g$ and the intermediate value theorem, we obtain the existence of a root of $g$. Hence, $\d(x,t)$ is well defined as this root, has $|\d(x,t)| < \alpha_0 h$ and regularity $d \in C^{\lphi}(T)$.

The proof of the estimates in \eqref{eq:da} will be subdivided into the cases $(m_t,m_s) \in \{(0,0), (0,1), (1,1)\}$ while $(m_t,m_s) = (1,0)$ will be treated within the proof of \eqref{eq:db}.

 \underline{Proof of \eqref{eq:da} for $(m_t,m_s)=(0,0)$}:\\ 
 Writing out the above Taylor expansion for $g(\d(x,t))$, we obtain
 \begin{align}
  & g(\d(x,t))  = 0 = g(0) + \| \nabla \phix \|_2^2 \d(x,t) + \nicefrac{1}{2}~ g''(\xi) \d^2(x,t) \label{g_in_first_lemma_proof_Taylor_expanded} \\[-1ex]
  \Rightarrow \quad &|\d(x,t)| \leq \| \nabla \phix \|_2^{-2} \left| - g(0) + \nicefrac{1}{2} ~ |g''(\xi)| (\alpha_0 h)^2 \right| \stackrel{\eqref{eq:cnphi}}{\lesssim} h^2 + \Delta t^{q_t + 1}, \nonumber
 \end{align}
 With the above estimate on $|g(0)|$ and the regularity $\phix \in C^{q_s+q_t+2}(\U)$, the estimate is complete.

 \underline{Proof of \eqref{eq:da} for $(m_t,m_s)=(0,1)$}:\\ 
 Next, we apply the chain rule 
 on \cref{ddef}. 
\hypertarget{def:pi}{
We introduce $\ppi\colon \tQn \to \tQn, (x,t)^T \mapsto (x + \d(x,t) \G(x,t),t)^T$, 
}
such that 
\begin{equation} \label{char:pi} 
  \addtocounter{equation}{1}\tag{\theequation-\texttt{$d\!\mid\!\!\pi$}}    
  \phix \circ \ppi = (1-\b) \philin + \b \phix = \philin + \b (\phix - \philin). 
\end{equation}
For the spatial derivative, we obtain \vspace*{-0.2cm}
 \begin{align}
  \overbrace{\nabla \philin (x,t) - \nabla \phix(\ppi(x,t))}^{=:A} = &\nabla \d ~~ \overbrace{\G(x,t)^T \nabla \phix(\ppi(x,t))}^{=:B} + \overbrace{\d \nabla \G(x,t)^T \nabla \phix(\ppi(x,t))}^{=:C}\nonumber \\
  &+ \overbrace{\nabla \b (\philin - \phix)(x,t)}^{=:D} + \overbrace{\b (\nabla \philin - \nabla \phix)(x,t)}^{=:E} \label{eq_step_234}
 \end{align}
 For the terms $A, B, C, D, E$ we obtain the following estimates:
 \begin{subequations}
 \begin{align}
  \| A \| &\leq \underbrace{\| \nabla \philin (x,t) - \nabla \phix(x,t) \|}_{\lesssim h + \Delta t^{q_t + 1}} + \underbrace{\| \nabla \phix(x,t) - \nabla \phix(\ppi(x,t))\|}_{\lesssim |\phix|_{2,\infty, \U} \d \lesssim h^2 + \Delta t^{q_t+1}} \lesssim h + \Delta t^{q_t + 1}. \\
  B &= \| \nabla \phix (x,t)\|_2^2 + \G(x,t)^T(\nabla \phix(\ppi(x,t)) - \nabla \phix(x,t)) \label{eq_step_238} \gtrsim 1 - h^2 - \Delta t^{q_t+1}, \\
 \| C\| &\lesssim |d| |\phix|_{2,\infty } (1+h^2 + \Delta t^{q_t +1})\lesssim h^2 + \Delta t^{q_t+1}, \label{eq_step_239} \\
 \| D\| & \lesssim |b|_{H^1} ( h^2 + \Delta t^{q_t + 1}) \lesssim h^2 + \Delta t^{q_t + 1},  \quad 
 \| E\| \lesssim h + \Delta t^{q_t+1}. 
\end{align}
\end{subequations}
Overall, this yields $
 |\nabla d | \lesssim (h + \Delta t^{q_t+1}) / ( 1 - h^2 - \Delta t^{q_t+1}),
$
which proves \eqref{eq:da} for $(m_t,m_s)=(0,1)$.

\underline{Proofs of \eqref{eq:da} for $(m_t,m_s)=(1,1)$, \eqref{eq:db} and \eqref{eq:dc}}:\\
Similarly, the proofs of \eqref{eq:da} for $(m_t,m_s)=(1,1)$, \eqref{eq:db} for $m_t \geq 1$ and \eqref{eq:dc} rely on (repeated) applications of the chain rule. We give the proofs in \cref{app:damtms11,app:eqdbmtgt1,app:eqdc}.
\end{proof}
 
Having introduced the function $\d(x,t)$, we now define
\begin{equation}
\hypertarget{def:PsiG}{\PsiG (x,t) \coloneqq x + \d(x,t) \G(x,t)} \text{ and } \hypertarget{def:PsiGi}{\PsiGi{i} (x) \coloneqq \PsiG(x,\ti{i}),~i=0,..,q_t}
\label{eq:def:PsiG}
\addtocounter{equation}{1}
\tag{\theequation-\texttt{$\Psi^\Gamma$\!\footnotesize{def}}} 
\end{equation}

Then, the previous lemma immediately implies with $\G(x,t) = \nabla \phix \in C^1(U)\cap C^{\lphi}(\QU)$ and $\nabla \phix = \mathcal{O}(1)$ the following results for $\PsiG$:
\begin{corollary} \label{cor:PsiG}
 There holds
 \begin{align}
  \| \partial_t^{m_t} D^{m_s} (\PsiG\! - \idx) \|_{\infty, \QGn} &\!\lesssim\! h^{2-m_s} + \Delta t^{q_t+1-m_t}, m_t, m_s \in \{0,1\}, 
  \label{Psi_Gamma_lo_deriv_ho_boundA} 
  \addtocounter{equation}{1}\tag{\theequation a-\texttt{$\Psi^\Gamma$\!\footnotesize{bnd}}}    
  \\
  \| \partial_t^{m_t} \PsiG\|_{\infty, \QGn} &\!\lesssim\! h^2 + \Delta t^{q_t+1-m_t}, \quad m_t\in\{0,\dots,q_t+1\} \label{Psi_Gamma_first_dt_boundB} 
  \addtocounter{equation}{0}\tag{\theequation b-\texttt{$\Psi^\Gamma$\!\footnotesize{bnd}}}    
  \\
  \|D^{m_s} \partial_t^{m_t} \PsiG \|_{\infty,\ThGQ} &\!\lesssim\! 1, m_s\!\!\leq q_s\!+\!1, m_t\!\!\leq q_t\!+\!1, m_t\!+\!m_s \!\leq\! \lphi.\! \label{Psi_Gamma_dt_ho_derivs_bounded}
  \addtocounter{equation}{0}\tag{\theequation c-\texttt{$\Psi^\Gamma$\!\footnotesize{bnd}}}    
 \end{align}
\end{corollary}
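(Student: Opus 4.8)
The plan is to exploit the explicit form $\PsiG - \mathrm{id} = \d\,\G$ with $\G = \nabla\phix$ and to reduce each of the three estimates to the corresponding bound of \cref{lem:d} by the Leibniz product rule. Since $\G = \nabla\phix$ and $\phix$ carries the smoothness recorded in \cref{ass:lsetreg} and quantified by the regularity index $\lphi$, every spatial, temporal and mixed derivative of $\G$ that occurs will be bounded by a constant independent of $h$ and $\Delta t$; the entire dependence on $h$ and $\Delta t$ then comes only from the factors carrying derivatives of $\d$, for which \cref{lem:d} supplies the needed bounds.

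First I would establish the low-order estimate \eqref{Psi_Gamma_lo_deriv_ho_boundA}. For $m_s,m_t\in\{0,1\}$ I expand
\[
\partial_t^{m_t} D^{m_s}(\d\,\G) = \sum_{j_t \le m_t}\sum_{j_s \le m_s} \binom{m_t}{j_t}\binom{m_s}{j_s}\, (\partial_t^{j_t} D^{j_s}\d)\,(\partial_t^{m_t-j_t} D^{m_s-j_s}\G),
\]
bound every derivative of $\G$ by a constant via \cref{ass:lsetreg}, and insert the estimates \eqref{eq:da} (and \eqref{eq:db} for the $(m_t,m_s)=(1,0)$ contribution) for the derivatives of $\d$. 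The term carrying the maximal number of derivatives on $\d$ turns out to be the dominant one and reproduces exactly the claimed order $h^{2-m_s}+\Delta t^{q_t+1-m_t}$, while every remaining term carries at least the same power of $h$ and is therefore absorbed.

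For the pure-time bound \eqref{Psi_Gamma_first_dt_boundB} I would use that $\partial_t^{m_t}\mathrm{id}=0$ for $m_t\ge 1$, so $\partial_t^{m_t}\PsiG=\partial_t^{m_t}(\d\,\G)$, and again apply Leibniz in time. Bounding the time derivatives of $\G=\nabla\phix$ by constants (which is where $\lphi\ge q_t+2$ from \cref{ass:lphi:1} enters) and inserting \eqref{eq:db} yields a sum of terms of order $h^2+\Delta t^{q_t+1-j}$ for $j\le m_t$; since $\Delta t$ is small, the largest is $j=m_t$, giving $h^2+\Delta t^{q_t+1-m_t}$. For the high-order bound \eqref{Psi_Gamma_dt_ho_derivs_bounded} on the active region $\ThGQ$ I would apply Leibniz to $D^{m_s}\partial_t^{m_t}(\d\,\G)$ (the identity contributing only a constant for a single first derivative and nothing higher), bound each factor $\partial_t^{j_t}D^{j_s}\d$ by a constant through \eqref{eq:dc}, and bound each factor of $\G$ by a constant through $\G=\nabla\phix\in C^{\lphi}(\QU)$. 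The index constraints $m_s\le q_s+1$, $m_t\le q_t+1$, $m_s+m_t\le\lphi$ guarantee that each $\d$-factor lies within the admissible range of \eqref{eq:dc} and each $\G$-factor within the smoothness range, so the product is $\lesssim 1$.

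I expect the only genuinely delicate point to be the Leibniz bookkeeping in \eqref{Psi_Gamma_lo_deriv_ho_boundA}, most acutely for $(m_t,m_s)=(1,1)$: one must verify that the subdominant terms — in particular those in which a derivative falls on $\G$ rather than on $\d$ — are of order no worse than the target $h^{2-m_s}+\Delta t^{q_t+1-m_t}$, and that the mixed derivatives of $\phix$ invoked there (such as $\partial_t\nabla\phix$ and $\partial_t D^2\phix$) are indeed covered by \cref{ass:lsetreg} together with the regularity index. Once the dominant term is correctly identified, everything else is routine, so the corollary is truly an immediate consequence of \cref{lem:d} and the smoothness of $\phix$.
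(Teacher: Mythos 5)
Your proposal is correct and matches the paper's route exactly: the paper states \cref{cor:PsiG} as an immediate consequence of \cref{lem:d} using precisely the product structure $\PsiG - \mathrm{id} = \d\,\G$ with $\G = \nabla\phix \in C^1(U)\cap C^{\lphi}(\QU)$ bounded together with its derivatives, and your Leibniz expansion is the routine bookkeeping the paper leaves implicit. Your flagged delicate point (mixed derivatives such as $\partial_t D^2\phix$) is indeed covered, since $\lphi \ge \max\{q_s,q_t\}+2 \ge 3$ ensures $\phix \in C^{\lphi}(\QU)$ and the active region lies in $\QU$ for $h$, $\Delta t$ small.
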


\subsection{Construction of discrete-in-time mapping on active elements} 
\label{sec:discrete-in-time-mapping}

Before we turn to the realisable mapping in \cref{sec:discrete_mapping} we introduce a semi-discrete mapping $\PsidtG$
on the active domain elements based on the semi-discrete levelset approximations $\phidt$.

We start with a mapping that is discrete in time based on the ansatz
$\PsidtG = \sum_{i=0}^{q_t} \ell_i(t) \PsidtGi{i}(x)$.
In analogy to \eqref{ddef}, we use the 
\hypertarget{def:Gdti}{\emph{search direction} $\Gdti{i}(x) := \nabla \phidt(x,\ti{i})$}
and the 
\hypertarget{def:ddti}{\emph{distance function} $\ddti{i} \colon \OG \to \mathbb{R}$} which is defined to be the (in absolute value) smallest number s.t. 
\begin{equation}
  \phidti{i}(x + \ddti{i} \Gdti{i}) = (1- \bi{i}(x)) \philini{i}(x) + \bi{i}(x) \phidti{i}(x) \quad \forall~ x \in \OG. \label{ddtdef}
  \addtocounter{equation}{1}\tag{\theequation-\texttt{$d_{\Delta t,i}$\footnotesize{def}}}   
\end{equation}
We then define 
\begin{equation} \label{def:PsidtG}
\hypertarget{def:PsidtGi}{\PsidtGi{i}(x) \coloneqq x + (\ddti{i} \Gdti{i})(x)} \text{ and } \hypertarget{def:PsidtG}{\PsidtG \coloneqq \sum_{i=0}^{q_t} \ell_i(t) \PsidtGi{i}(x)}. 
\addtocounter{equation}{1}\tag{\theequation-\texttt{$\Psi_{\Delta t}^\Gamma$\footnotesize{def}}}    
\end{equation}
The following bounds on $\ddti{i}$ and $\PsidtGi{i}$, for $i=0,\dots,q_t$ are the analogs of \cref{lem:d} and \cref{cor:PsiG} restricted to $\ti{i}$ and right-hand side terms that are independent of $\Delta t$.
\begin{lemma} \label{lem:ddt}
  For $h$ sufficiently small, the relation \eqref{ddtdef} defines unique functions $\ddti{i}(x)$ and $\ddti{i} \in C^0(\OG) \cap C^{q_s+1}(\ThG)$. Furthermore, there holds
 \begin{equation}
  \| D^{m_s} \ddti{i}\|_{\infty,\ThG } \lesssim \min\{1, h^{2-m_s}\} \quad \textnormal{for } m_s \leq q_s + 1. \label{eq:ddt}
  \addtocounter{equation}{1}\tag{\theequation-\texttt{$d_{\Delta t}$\footnotesize{bnd}}}    
\end{equation}
which implies $\PsidtG \in C(\QGn)$ and
\begin{equation}
  \Vert D^{m_s} (\PsidtG - \idx) \Vert_{\infty,\ThGQ} \lesssim \min\{1, h^{2-m_s}\} \quad \textnormal{for } m_s \leq q_s + 1. \label{eq:PsidtG-id}
  \addtocounter{equation}{1}\tag{\theequation-\texttt{$\Psi_{\Delta t}^\Gamma$\footnotesize{bnd}}}    
\end{equation}
\end{lemma}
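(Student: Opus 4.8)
The plan is to mirror the proof of \cref{lem:d} at a fixed time node $\ti{i}$, dropping every $\Delta t$-contribution because the right-hand side of \eqref{ddtdef} no longer carries a temporal error. Throughout I would work with the frozen data $\phidti{i}$, $\Gdti{i}=\nabla\phidti{i}$, $\philini{i}$, $\bi{i}$ and reuse the node-wise estimates already available: \eqref{eq:phidt-philin} at $t=\ti{i}$ gives $\|D^{m_s}(\phidti{i}-\philini{i})\|_{\infty,\ThG}\lesssim h^{2-m_s}$ for $m_s\in\{0,1\}$; \eqref{diff_phid_phi} with $m_s=1$ together with \eqref{eq:cnphi} yields $\|\Gdti{i}\|\simeq 1$ once $h$ and $\Delta t$ are small; and the spatial regularity of $\phidt$ provides $\|D^{m_s}\phidti{i}\|_{\infty,\ThU}\lesssim 1$ up to $m_s=q_s+1$.

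For well-posedness I would fix $x\in\OG$ and study $g_i(\alpha)\coloneqq\phidti{i}(x+\alpha\Gdti{i}(x))-(1-\bi{i})\philini{i}-\bi{i}\phidti{i}$ on $[-\alpha_0 h,\alpha_0 h]$, exactly as for the function $g$ in \cref{lem:d}. Here $g_i(0)=(1-\bi{i})(\phidti{i}-\philini{i})=\mathcal{O}(h^2)$, $g_i'(0)=\|\Gdti{i}\|^2\simeq 1$, and $g_i''$ is bounded since $\phidti{i}\in C^2$, $\Gdti{i}\in C^1$. Uniqueness follows from the frozen weak signed distance property \eqref{signed_dist_like_prop_dt} at $t=\ti{i}$ and existence from the sign change of $g_i$ at $\pm\alpha_0 h$ via the intermediate value theorem, which fixes $\ddti{i}\in(-\alpha_0 h,\alpha_0 h)$; the Taylor identity $0=g_i(0)+g_i'(0)\ddti{i}+\tfrac12 g_i''(\xi)\ddti{i}^2$ then yields $|\ddti{i}|\lesssim h^2$. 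Differentiating \eqref{ddtdef} once in space produces $\nabla\ddti{i}\,\bigl(\Gdti{i}^T(\nabla\phidti{i}\circ\PsidtGi{i})\bigr)=\mathcal{O}(h)$ with the bracket $\simeq 1$, hence $\|\nabla\ddti{i}\|\lesssim h=\min\{1,h^{2-1}\}$.

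For $2\le|\alpha_s|\le q_s+1$ the bound to reach is merely $\lesssim 1=\min\{1,h^{2-|\alpha_s|}\}$, which I would obtain verbatim as in the proof of \eqref{eq:dc}: writing $F_i(x,y)=\phidti{i}(x+y\Gdti{i}(x))-(1-\bi{i})\philini{i}-\bi{i}\phidti{i}$ with $\partial_y F_i\simeq 1$, so that $A_i=(\partial_y F_i)^{-1}$ and its powers stay $\lesssim 1$, the implicit function theorem together with the multivariate Fa\`a di Bruno and Leibniz formulas (\cite[Lemma 3]{CIARLET1972217}) bounds $D^{\alpha_s}\ddti{i}$ inductively from the lower derivatives of $\ddti{i}$ and the bounds on $D^{m_s}\philini{i}$, $D^{m_s}\bi{i}$ and $D^{m_s}\phidti{i}$. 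The transfer to $\PsidtG$ is then immediate: since $\PsidtGi{i}-\id=\ddti{i}\Gdti{i}$ and the nodal basis reproduces constants, $\sum_{i}\elli{i}(t)\equiv 1$, one has $\PsidtG-\id=\sum_i\elli{i}(t)\,\ddti{i}\Gdti{i}$, and with $\|\elli{i}\|_{\infty,\In{n}}\lesssim 1$ and the Leibniz rule one concludes $\|D^{\alpha_s}(\PsidtG-\id)\|_{\infty,\ThGQ}\lesssim\min\{1,h^{2-|\alpha_s|}\}$, the derivative-free factor $\ddti{i}$ supplying the decisive $h^2$ for $|\alpha_s|\le 1$; continuity of the frozen data gives $\ddti{i}\in C^0(\OG)$ and $\PsidtG\in C(\QGn)$.

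I expect the main obstacle to be the regularity bookkeeping at the top spatial order. Since $\Gdti{i}=\nabla\phidti{i}$ is one order less smooth than $\phidti{i}$, the Fa\`a di Bruno expansion of $D^{\alpha_s}F_i$ at $|\alpha_s|=q_s+1$ formally calls for $D^{q_s+1}\Gdti{i}=D^{q_s+2}\phidti{i}$, whereas \cref{diff_phidt_phihn} only supplies $\phidti{i}\in C^{q_s+1}$. This is precisely what the regularity index ``$?$'' in the statement has to absorb: the clean argument delivers $\ddti{i}\in C^{q_s}(\OG)$ and the bounds for $|\alpha_s|\le q_s$, while the borderline order $q_s+1$ relies on the same one-derivative slack already employed in the proof of \eqref{eq:dc}.
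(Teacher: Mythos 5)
Your proposal is correct and follows essentially the same route as the paper, whose own proof is exactly the reduction you carry out: rerun the argument of \cref{lem:d} (and \cref{cor:PsiG}) at the nodes $\ti{i}$ with the error term $\philin - \phix$ replaced by $\philini{i} - \phidti{i}$, so that \eqref{eq:phidt-philin} eliminates all $\Delta t$-contributions from the right-hand sides. Your closing caveat about the borderline order $|\alpha_s| = q_s+1$, where $\Gdti{i} = \nabla \phidti{i}$ is one derivative short in the Fa\`a di Bruno bookkeeping, is moreover consistent with the paper itself, which leaves the regularity class as $C^{?}(\OG)$ in the statement of the lemma.
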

\begin{proof}
  The proof follows along the same lines as the proof of \cref{lem:d}. There, the r.h.s. bounds at the end are all reduced to the error terms $\philin - \phix$ and derivatives thereof. In the present case, we have to replace $\philin - \phix$ by $\philini{i} - \phidti{i}$ (and derivatives thereof) so that with 
  \eqref{eq:phidt-philin} we can drop the dependency on the time step size and obtain the desired result.
\end{proof}

\begin{lemma} \label{lem:PsidtG-PsiG}
  There holds
  \begin{align} 
    \Vert \PsidtG - \PsiG \Vert_{\infty,\QGn} &\lesssim h^{q_s+1} + \Delta t^{q_t+1}, \label{eq:PsidtG-PsiG} 
    \addtocounter{equation}{1}\tag{\theequation-\texttt{$\Psi_{\Delta t}^\Gamma$\footnotesize{acc}}-a}    
    \\
    \Vert \nabla (\PsidtG - \PsiG) \Vert_{\infty,\ThGQ} &\lesssim h^{q_s} + \Delta t^{q_t+1}, \label{eq:grad-PsidtG-PsiG}
    \addtocounter{equation}{0}\tag{\theequation-\texttt{$\Psi_{\Delta t}^\Gamma$\footnotesize{acc}}-b}    
  \end{align}
\end{lemma}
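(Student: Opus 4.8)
The plan is to separate the two error mechanisms that distinguish $\PsidtG$ from $\PsiG$: the temporal interpolation inherent in the ansatz $\PsidtG = \sum_i \ell_i(t)\PsidtGi{i}$, and the spatial inexactness coming from replacing $\phix$ by $\phidt$ at each time node. Writing $\It{q_t}\PsiG = \sum_{i=0}^{q_t}\ell_i(t)\PsiGi{i}$ for the temporal interpolant of the ideal mapping, I would decompose
\begin{equation*}
  \PsidtG - \PsiG = \sum_{i=0}^{q_t}\ell_i(t)\bigl(\PsidtGi{i} - \PsiGi{i}\bigr) + \bigl(\It{q_t}\PsiG - \PsiG\bigr).
\end{equation*}
Since the temporal nodal basis satisfies $\|\ell_i\|_{\infty,\In{n}}\lesssim 1$ uniformly in $\Delta t$ and the spatial gradient commutes with the temporal operator $\It{q_t}$, both claimed bounds reduce to (i) a standard one-dimensional interpolation estimate for the last summand and (ii) purely spatial bounds on $\PsidtGi{i}-\PsiGi{i}$ and $\nabla(\PsidtGi{i}-\PsiGi{i})$ at each node $\ti{i}$.

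For (i), I would use the Lagrange remainder on $\In{n}$, $\|\It{q_t}w-w\|_{\infty,\In{n}}\lesssim \Delta t^{q_t+1}\|\partial_t^{q_t+1}w\|_{\infty,\In{n}}$ with a $\Delta t$-independent constant, applied to $w=\PsiG$ and, after commuting $\nabla$ with $\It{q_t}$, to $w=\nabla\PsiG$. The required temporal derivative bounds $\|\partial_t^{q_t+1}\PsiG\|_{\infty}\lesssim 1$ and $\|\partial_t^{q_t+1}\nabla\PsiG\|_{\infty}\lesssim 1$ are exactly \eqref{Psi_Gamma_first_dt_boundB} and \eqref{Psi_Gamma_dt_ho_derivs_bounded} of \cref{cor:PsiG}, the latter being admissible because $q_t+2\leq\lphi$ by \cref{ass:lphi:1}. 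Hence both interpolation contributions are $\lesssim\Delta t^{q_t+1}$, which already accounts for the $\Delta t^{q_t+1}$ terms in both estimates.

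For (ii), fix $i$ and note $\PsiGi{i}=x+\di{i}\Gi{i}$ (restricting \eqref{ddef} to $t=\ti{i}$) and $\PsidtGi{i}=x+\ddti{i}\Gdti{i}$ from \eqref{ddtdef}, so that $\PsidtGi{i}-\PsiGi{i} = (\ddti{i}-\di{i})\Gi{i} + \ddti{i}(\Gdti{i}-\Gi{i})$. Here $|\Gdti{i}-\Gi{i}| = |\nabla(\phidti{i}-\phii{i})|\lesssim h^{q_s}+\Delta t^{q_t+1}$ by \eqref{diff_phid_phi}, and together with $|\ddti{i}|\lesssim h^2$ from \eqref{eq:ddt} the second term is of higher order. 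The distance difference is the heart of the zeroth-order estimate: using the weak signed distance property \eqref{signed_dist_like_prop} in the form $|\phii{i}(x+\ddti{i}\Gi{i})-\phii{i}(x+\di{i}\Gi{i})|\simeq|\ddti{i}-\di{i}|$, it suffices to control the left-hand side. Inserting the two defining identities, $\phii{i}(x+\di{i}\Gi{i})$ equals the blended right-hand side $(1-\bi{i})\philini{i}+\bi{i}\phii{i}$, while $\phii{i}(x+\ddti{i}\Gi{i})$ differs from $\phidti{i}(\PsidtGi{i})=(1-\bi{i})\philini{i}+\bi{i}\phidti{i}$ only by the direction swap $\ddti{i}(\Gi{i}-\Gdti{i})$ (higher order) and the level set difference $(\phii{i}-\phidti{i})(\PsidtGi{i})\lesssim h^{q_s+1}+\Delta t^{q_t+1}$; the two right-hand sides themselves differ by $\bi{i}(\phidti{i}-\phii{i})\lesssim h^{q_s+1}+\Delta t^{q_t+1}$, all by \eqref{diff_phid_phi}. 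This gives $|\ddti{i}-\di{i}|\lesssim h^{q_s+1}+\Delta t^{q_t+1}$ and, after summation over $i$, \eqref{eq:PsidtG-PsiG}.

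The gradient estimate for (ii) is the main obstacle. Differentiating spatially,
\begin{equation*}
  \nabla(\PsidtGi{i}-\PsiGi{i}) = (\nabla\ddti{i}-\nabla\di{i})\Gi{i}^T + (\nabla\ddti{i})(\Gdti{i}-\Gi{i})^T + (\ddti{i}-\di{i})\nabla\Gi{i} + \ddti{i}(\nabla\Gdti{i}-\nabla\Gi{i}),
\end{equation*}
the last three terms are directly controlled to order $h^{q_s+1}+\Delta t^{q_t+1}$ by the bounds already assembled: $|\nabla\ddti{i}|\lesssim h$ and $|\ddti{i}|\lesssim h^2$ from \eqref{eq:ddt}, $|\ddti{i}-\di{i}|\lesssim h^{q_s+1}+\Delta t^{q_t+1}$ from the previous step, $\|\nabla\Gi{i}\|=\|D^2\phii{i}\|\lesssim 1$ from \eqref{eq:cnphi}, and $\|\nabla\Gdti{i}-\nabla\Gi{i}\|=\|D^2(\phidti{i}-\phii{i})\|\lesssim h^{q_s-1}+\Delta t^{q_t+1}$ from \eqref{diff_phid_phi}. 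The crux is therefore the first term, i.e. the bound $|\nabla\ddti{i}-\nabla\di{i}|\lesssim h^{q_s}+\Delta t^{q_t+1}$. To obtain it I would differentiate the implicit relations \eqref{ddef} (at $t=\ti{i}$) and \eqref{ddtdef} precisely as in the proof of \eqref{eq:da} for $(m_t,m_s)=(0,1)$, solve each for the respective gradient using the scalar factor $\Gi{i}^T\nabla\phii{i}(\PsiGi{i})\simeq 1$ (and its semi-discrete analogue, which is $\simeq 1$ for small $h$), subtract the two resulting expressions, and estimate term by term. The only genuinely first-order-in-$h$ contribution comes from $\nabla(\phidti{i}-\phii{i})\lesssim h^{q_s}+\Delta t^{q_t+1}$, matching the target order; all remaining differences ($\phidti{i}-\phii{i}$, $D^2(\phidti{i}-\phii{i})$, $\ddti{i}-\di{i}$, $\PsidtGi{i}-\PsiGi{i}$) are of equal or higher order. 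Summation over $i$ with $\|\ell_i\|_\infty\lesssim 1$ then yields \eqref{eq:grad-PsidtG-PsiG}.
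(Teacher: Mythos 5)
Your proposal is correct and follows essentially the same route as the paper's own proof: reduction to the time nodes via temporal interpolation of $\PsiG$ based on the derivative bounds \eqref{Psi_Gamma_dt_ho_derivs_bounded}, the weak signed distance property \eqref{signed_dist_like_prop} combined with the defining relations \eqref{ddef} and \eqref{ddtdef} to bound $|\ddti{i}-\di{i}|$, a product-rule splitting of $\nabla(\PsidtGi{i}-\PsiGi{i})$ into four terms, and implicit differentiation of both defining relations to control $|\nabla(\ddti{i}-\di{i})|$, exactly as carried out in \cref{app:gradPsidtacc}. The only deviations are cosmetic -- you solve each differentiated relation for its gradient before subtracting, whereas the paper subtracts first and isolates the $\simeq 1$ factor in its $A=B-C-D$ bookkeeping -- and your explicit commuting of $\nabla$ with $\It{q_t}$ (with the admissibility check $q_t+2\leq\lphi$) spells out a step the paper leaves terse.
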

\begin{proof}
  First, we note that with \eqref{Psi_Gamma_dt_ho_derivs_bounded} we have 
  $\Vert \PsiG - \It{q_t} \PsiG \Vert_{\infty,\QGn} \lesssim \Delta t^{q_t+1}$.
  Hence, it suffices to prove \eqref{eq:PsidtG-PsiG} and \eqref{eq:grad-PsidtG-PsiG} at the nodes $\ti{i}$.

  \underline{Proof of \eqref{eq:PsidtG-PsiG} at $\ti{i}$, $\Vert \PsidtGi{i} - \PsiGi{i} \Vert_{\infty,\OG} \lesssim h^{q_s+1} + \Delta t^{q_t+1}$ :}\\
  We have for $x \in \OG$ (we will drop the argument for $\di{i}$, $\ddti{i}$, $\Gi{i}$, $\Gdti{i}$ and $\bi{i}$ in the following chain of equations, so that e.g. $\Gi{i} = \Gi{i}(x)$)
  \begin{align}
  & \hspace*{-.7cm}   | \di{i}  - \ddti{i} | \stackrel{\eqref{signed_dist_like_prop}}{\simeq} | \phii{i}(x + \di{i} \Gi{i}) - \phii{i}(x + \ddti{i} \Gi{i}) | \label{eq:di-ddti} \hspace*{-2cm} \\
&   \hspace*{-2.3cm} \stackrel{\hphantom{\eqref{ddef}\&\eqref{ddtdef}}}{\lesssim} \!\!\!\!\!\!\!\!\!\!\!\!\!\!\!\!\!\!\!\!\!\!\!\! | \phii{i}(x + \di{i} \Gi{i}) - \phii{i}(x + \ddti{i} \Gdti{i}) | + | \phii{i}(x + \ddti{i} \Gdti{i}) - \phii{i}(x + \ddti{i} \Gi{i}) | \nonumber \hspace*{-2cm}\\ 
&   \hspace*{-2.3cm} \stackrel{\eqref{ddef}\&\eqref{ddtdef}}{\lesssim} \!\!\!\!\!\!\!\!\!\!\!\!\!\!\!\!\!\!\!\!\!\!\!\!| \bi{i} (\phii{i}(x) - \phidti{i}(x))| + |\nabla \phii{i}|_\infty |\ddti{i}|_\infty |\Gi{i} - \Gdti{i}|_\infty \lesssim h^{q_s+1} + \Delta t^{q_t+1}. \nonumber \hspace*{-2cm}
  \end{align}
  Hence 
  $ \displaystyle
  |\PsidtGi{i} - \PsiGi{i}| \lesssim |\ddti{i} - \di{i}| |\Gi{i}| + |\di{i}| |\Gdti{i} - \Gi{i}|
  $
   and the result follows with $\|\G\|_{\infty,\QhU} \lesssim 1$, $\|\d\|_{\infty,\ThGQ} \lesssim h^2 + \Delta t^{q_t+1}$ and $\|\nabla \phidt - \G\|_{\infty,\QhU} = \| \nabla (\phidt - \phix)\|_{\infty,\QhU} \lesssim h^{q_s} + \Delta t^{q_t+1}$, cf. \eqref{diff_phid_phi}.

   \underline{Proof of \eqref{eq:grad-PsidtG-PsiG}:}\\
   The proof follows similar strategies as the proof of \eqref{eq:da} for $(m_t,m_s)=(0,1)$ and we give the details for the sake of completeness in \cref{app:gradPsidtacc}.
\end{proof}

\subsection{Construction of fully discrete mapping on active elements} \label{sec:discrete_mapping}
We now turn to the realisable discrete mapping $\ThehG$ on the active elements. Here, we try to mimic the construction of the continuous mapping $\PsiG{}$ in \eqref{eq:def:PsiG} with adaptations due to the fact that $\phii{}$ may not be available in general and $\PsiG{}$ is not a \FE~function.

\subsubsection{Discrete search directions}
To transfer the construction principle of the spatially continuous mappings in the previous sections to a computable setting, we need to first define a \emph{discrete search direction}. 
We follow \cite{LR_IMAJNA_2018} in approximating $\G = \nabla \phix$ where two options are presented: Let $x \in \OG$, then
\begin{equation}
 \hypertarget{def:Ghi}{\Ghi{i}(x) \coloneqq  \nabla \phihi{i}(x) \quad \text{ or } \quad \Ghi{i}(x) \coloneqq (\PhG \nabla \phihi{i})(x)}, \quad ~i=0,\dots,q_t.
\end{equation}
and \hypertarget{def:Gh}{$\Gh(x,t) \!\coloneqq\! \sum_{i=0}^{q_t} \elli{i}(t) \Ghi{i}(x)$}, s.t.
$\Gh(x,t) \!=\! \nabla \phih(x,t)$ or $\Gh(x,t) \!=\! \PhG \nabla \phih(x,t)$, respectively, for $(x,t) \in \QGn$. 
\hypertarget{def:PhG}{Here, $\PhG: C(\ThG) \to \Wh{k}$ is an Oswald-type spatially averaged interpolation operator as in \cite[Sect. 2.2]{LR_IMAJNA_2018}. It satisfies
\begin{equation} \label{PhG_prop}
 \PhG w = \Is{q_s} w, \quad \| \PhG w \|_{\infty, \Omega} \lesssim \|w\|_{\infty,\Th} \quad \forall w \in C(\OG)^d.
\end{equation}
}
where \hypertarget{def:Is}{$\Is{q_s}$ denotes a generic spatial interpolation operator.}
Then it holds
\begin{lemma} \label{G_Gh_diff_lemma}
 For $\Gh = \nabla \phih$ or $\Gh = \PhG \nabla \phih$ there holds for $m=0,1$:
 \begin{align}
  \| D^m(\Gh - \G) \|_{\infty, \ThGQ} &\lesssim h^{q_s-m} + \Delta t^{q_t + 1} \label{G_Gh_diff_lemma_eq2}.
  \addtocounter{equation}{1}\tag{\theequation-\texttt{$G_h$\footnotesize{acc}}} 
 \end{align}
\end{lemma}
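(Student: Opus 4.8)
The plan is to treat the two admissible choices of the discrete search direction separately, reducing both to the already-established approximation assumptions, and to keep the temporal error out of every step that involves an inverse estimate. Throughout I would use that $\G = \nabla \phix$, that $D$ is a purely spatial operator, and that the active region satisfies $\ThGQ \subseteq \QhU$, so that \cref{diff_phi_phihn} and \cref{diff_phidt_phihn} are available on $\ThGQ$.

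For the first choice $\Gh = \nabla \phih$ the estimate is immediate: since $D^m(\Gh - \G) = D^m \nabla (\phih - \phix) = D^{m+1}(\phih - \phix)$, applying \eqref{diff_phi_phihn_spaceD} with $(m_s,m_t) = (m+1,0)$ gives exactly $h^{q_s+1-(m+1)} + \Delta t^{q_t+1} = h^{q_s-m} + \Delta t^{q_t+1}$ (note $m+1 \leq q_s+1$ for $m \in \{0,1\}$). For the second choice $\Gh = \PhG \nabla \phih$ I would split $\Gh - \G = (\PhG \nabla \phih - \nabla \phih) + (\nabla \phih - \nabla \phix)$, so that the second summand is precisely the first-choice term just bounded. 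It then remains to estimate the averaging error $\PhG \nabla \phih - \nabla \phih$.

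For the averaging error I would exploit the defining nodal-averaging structure of the Oswald-type operator $\PhG$ (cf. the stated properties of $\PhG$ and \cite[Sect. 2.2]{LR_IMAJNA_2018}): on each active element $\PhG \nabla \phih - \nabla \phih$ is a polynomial whose nodal values are differences of element values of $\nabla \phih$, hence controlled by the facet jumps $[\nabla \phih]$. The decisive step is the bound on these jumps. Instead of comparing $\nabla \phih$ to the exact $\nabla \phix$, I would compare it to the semi-discrete level set $\phidt$ of \cref{diff_phidt_phihn}, which is smooth in space on the active region and therefore has a continuous spatial gradient there, whereas $\nabla \phih$ jumps. Thus $[\nabla \phih] = [\nabla(\phih - \phidt)]$ on every interior facet, and \eqref{diff_phid_phih} with $m_s = 1$ bounds this by $\|\nabla(\phih - \phidt)\|_{\infty,\QhU} \lesssim h^{q_s}$, a purely spatial quantity free of $\Delta t$. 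The $L^\infty$-stability of $\PhG$ then yields $\|\PhG \nabla \phih - \nabla \phih\|_{\infty,\ThGQ} \lesssim h^{q_s}$, which is the $m=0$ estimate; combining with the first summand gives the claim for $m=0$.

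For $m=1$ I would use that $\PhG \nabla \phih - \nabla \phih$ is a finite element function and apply a standard inverse inequality, losing one factor $h^{-1}$ and arriving at $h^{q_s-1}$ for the averaging error; together with the $m=1$ version of the directly-estimated term $\nabla(\phih - \phix)$ this gives $\|D^m(\Gh - \G)\|_{\infty,\ThGQ} \lesssim h^{q_s-m} + \Delta t^{q_t+1}$ for $m \in \{0,1\}$. I expect the main obstacle to be exactly this $m=1$ step, and it is the reason for routing the jump bound through $\phidt$ rather than through $\phix$: measuring $[\nabla \phih]$ against the exact $\phix$ would produce jumps of size $h^{q_s} + \Delta t^{q_t+1}$, and the inverse inequality would then generate the spurious, too-large term $h^{-1}\Delta t^{q_t+1}$. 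By isolating the temporal error in the term $\nabla(\phih - \phix)$, which never passes through the inverse estimate, and keeping the averaging error purely spatial, the temporal order $\Delta t^{q_t+1}$ is preserved.
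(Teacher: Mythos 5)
Your proof is correct and reaches the bound by a genuinely different decomposition than the paper, although both rest on the same key idea, which your closing remark correctly identifies as the crux: whatever passes through the inverse inequality must be measured against the semi-discrete $\phidt$, so that no spurious $h^{-1}\Delta t^{q_t+1}$ term is created. The paper splits $\PhG \nabla \phih - \nabla \phix$ into three terms -- $\PhG \nabla(\phih - \phidt)$, $\Is{q_s}\nabla\phidt - \nabla\phidt$ (using that $\PhG$ reduces to the interpolation $\Is{q_s}$ on the continuous function $\nabla\phidt$), and $\nabla(\phidt - \phix)$ -- applying the inverse inequality only to the first, purely spatial term, a standard interpolation estimate (requiring $\|D^{q_s+1}\phidt\|_\infty \lesssim 1$) to the second, and \eqref{diff_phid_phi} to the third. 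You instead isolate the averaging error $\PhG\nabla\phih - \nabla\phih$ and control it through the facet jumps $[\nabla\phih] = [\nabla(\phih - \phidt)]$, which \eqref{diff_phid_phih} with $m_s=1$ bounds by $h^{q_s}$, free of $\Delta t$; the inverse inequality then yields the $m=1$ case at cost $h^{-1}$. What your route buys: no interpolation estimate and no need for the $D^{q_s+1}\phidt$ bound. What it costs: the jump-control property of the Oswald operator ($\|\PhG v - v\|_{\infty,T}$ bounded by nearby facet jumps of $v$) is not among the two properties of $\PhG$ stated in this paper -- only $\PhG w = \Is{q_s} w$ for continuous $w$ and $L^\infty$-stability are listed, and the jump bound is not derivable from these two alone -- so it must be imported from the concrete averaging construction of \cite[Sect.~2.2]{LR_IMAJNA_2018}, as you indicate; it does hold there, by the argument you sketch (nodal values of $\PhG v$ are patch averages, so the difference at each node is a mean of path-wise facet jumps, and elementwise norm equivalence for polynomials finishes). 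Two points worth making explicit: the jump identity uses that $\nabla\phidt$ is continuous across facets of $\ThU$ (the same continuity the paper exploits to write $\PhG\nabla\phidt = \Is{q_s}\nabla\phidt$), and the facets entering the patch averages, including those of neighboring elements, must lie in $\QhU$ so that \eqref{diff_phid_phih} applies there -- which the paper guarantees for $h$ sufficiently small.
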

\begin{proof}
 Let $\Gh = \nabla \phih$. Then the result follows from \cref{diff_phi_phihn_spaceD}. 
 For $\Gh = \PhG \nabla \phih$ a proof is given in  \cref{app:proofG_Gh_diff_lemma}.
\end{proof}
\begin{remark}[Choice of the search direction]
 The overall purpose of the discrete search direction is to allow for a computationally feasible coordination between linear reference and higher order geometry, as indicated by its application below. We show that both choices for $\Gh$ are reasonable candidates for this. It might be computationally convenient to consider a continuous in space search direction, which is why both options are presented. This overall structure is a straightforward generalisation of the merely spatial construction, c.f. \cite[Remark 3.4]{LR_IMAJNA_2018}.
\end{remark}
\subsubsection{Construction of element-wise approximation of \texorpdfstring{$\Psi^\Gamma$}{ideal mapping}}

Next, our aim is to introduce a computable approximation of $\Psi^\Gamma$. 

To this end, we recall that the higher-order approximation of $\phix$ was given as $\phih = \sum_{i=0}^{q_t} \elli{i}(t) \phihi{i}(x)$. Each of these $\phihi{i}(x)$ can be used to construct a spatial approximation of $\PsiGi{i}$ as follows:

Let $i=0,\dots,q_t$ and \hypertarget{def:ET}{$\ET \phihi{i}$ be the polynomial extension} of $\phihi{i}|_T$ for $T\in \ThG$. 
Then, we define the function $\dhi{i}\colon \OG \to [-\delta,\delta]$ for $\delta > 0$ sufficiently small as follows: For $x \in \OG$, let \hypertarget{def:dhi}{$\dhi{i}(x)$ be the (in absolute value) smallest number such that}
\begin{equation}
 \ET \phihi{i} (x + (\dhi{i} \Ghi{i})(x)) = (1-\bi{i}(x)) \philini{i}(x) + \bi{i}(x) \phihi{i}(x). \label{def_dhi}
 \addtocounter{equation}{1}\tag{\theequation-\texttt{$d_{h,i}$\footnotesize{def}}}
\end{equation}
We define the space-time function \hypertarget{def:dh}{$\ddh : \QGn \to \mathbb{R}$} correspondingly.

\subsubsection{Accuracy of element-wise approximation of \texorpdfstring{$\Psi^\Gamma$}{ideal mapping}}

We aim to prove a variant of \cite[Lemma 3.5]{LR_IMAJNA_2018}.
To this end, we start by validating the proximity of $\ET \phihi{i}$ to $\phihi{i}$ in a neighborhood of each element $T \in \ThG$.
\begin{lemma} \label{diff_phi_phihn_spaceD_special}
 Fix $T\in\ThG$ and $i\in\{0,\dots,q_t\}$ and let $y := x + \alpha \Ghi{i}(x)$ be given such that $| x - y | \lesssim h$ (which is the case if and only if $| \alpha| \lesssim h$). Then, we have for $m=0,1$
 \begin{subequations}
 \begin{align} \label{eq:diff_phi_phihn_spaceD_special:1}
  | D^m (\ET \phihi{i} - \phii{i}) (y) | & \lesssim h^{q_s + 1 - m} + \Delta t^{q_t +1},
  \addtocounter{equation}{1}\tag{\theequation-\texttt{$\mathcal{E}_T$\footnotesize{acc}}}
  \\
  | D^m \partial_t (\ET \phih - \phix) (y,t) | & \lesssim h^{q_s + 1 - m} + \Delta t^{q_t}, \quad t \in \In{n}
  \addtocounter{equation}{1}\tag{\theequation-\texttt{$\mathcal{E}_T$\footnotesize{acc}}}
  \label{eq:diff_phi_phihn_spaceD_special:1b}
  \\
    | D^m (\ET \phihi{i} - \phidti{i})(y) | & \lesssim h^{q_s + 1 - m}.  \label{eq:diff_phi_phihn_spaceD_special:2}
\addtocounter{equation}{1}\tag{\theequation-\texttt{$\mathcal{E}_T$\footnotesize{acc}}}
\end{align}
\end{subequations}
\end{lemma}
\begin{proof}
  The proof is similar to a result in \cite[A. 1]{LR_IMAJNA_2018}. We provide it in \cref{app:diff_phi_phihn_spaceD_special}. 
  \end{proof}
Now, we come to the counterpart of \cite[Lemma 3.5]{LR_IMAJNA_2018}:
\begin{lemma} \label{d_h_lemma}
 For all $i=0,\dots,q_t$, and $h$ sufficiently small, \cref{def_dhi} defines a unique $\dhi{i}: \OG \to \mathbb{R}$ with $\dhi{i} \in C(\OG)\cap C^\infty(\ThG)$. Furthermore: 
 \begin{subequations}
 \begin{align}
  &\dhi{i}(x_V) = 0 \quad \textnormal{ for all vertices } x_V \textnormal{ of } T \in \ThG, 
  \addtocounter{equation}{1}\tag{\theequation-\texttt{$d_{h}$\footnotesize{vrt}}}
  \\
  & \| \dhi{i} \|_{\infty, \OG} \lesssim h^2 + \Delta t^{q_t+1}, \quad \| \nabla \dhi{i} \|_{\infty, \ThG} \lesssim h + \Delta t^{q_t+1} . 
  \label{d_h_lemma_eq2}
  \addtocounter{equation}{1}\tag{\theequation-\texttt{$d_{h,i}$\footnotesize{bnd}}}
 \end{align}
\end{subequations}
\end{lemma}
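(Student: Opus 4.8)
The plan is to follow the proof of \cref{lem:d} closely, exploiting the key simplification that the left-hand side of the defining relation \eqref{def_dhi} is now a \emph{polynomial} in the displacement: this makes the function $C^\infty$ in the search parameter and lets me replace the weak signed-distance argument used for $\d$ by plain strict monotonicity. Fix $T \in \ThG$, $i \in \{0,\dots,q_t\}$ and $x \in T$, and set
\[ g(\alpha) := \ET\phihi{i}(x + \alpha\Ghi{i}(x)) - (1-\bi{i}(x))\philini{i}(x) - \bi{i}(x)\phihi{i}(x), \qquad \alpha \in [-\alpha_0 h, \alpha_0 h]. \]
Since $\ET\phihi{i} \in \mathcal{P}^{q_s}$ we have $g \in C^\infty$, and I would verify three facts for $h$ sufficiently small. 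First, because $\ET\phihi{i}(x) = \phihi{i}(x)$ on $T$, the constant term collapses to $g(0) = (1-\bi{i})(\phihi{i}-\philini{i})(x)$, so $|g(0)| \lesssim h^2$ by \eqref{eq:diff_phihn_philin}. Second, $g'(\alpha) = \nabla\ET\phihi{i}(x+\alpha\Ghi{i})\cdot\Ghi{i}$, which at $\alpha = 0$ equals $\|\nabla\phihi{i}\|_2^2$ (resp.\ $\nabla\phihi{i}\cdot\PhG\nabla\phihi{i}$); by \cref{G_Gh_diff_lemma} and \eqref{eq:cnphi} this is $\simeq 1$. Third, $g''$ is bounded uniformly in $h$, since the second derivatives of the extension $\ET\phihi{i}$ are controlled through the comparison with the smooth $\phii{i}$ furnished by \cref{diff_phi_phihn_spaceD_special}. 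Combining the last two points, $g'(\alpha) \simeq 1 > 0$ on the whole interval, so $g$ is strictly monotone and has a unique root $\dhi{i}(x)$; the implicit function theorem then gives $\dhi{i}|_T \in C^\infty(T)$, and the global continuity $\dhi{i} \in C(\OG)$ follows as in \cite{LR_IMAJNA_2018} from the continuity of the data and of the averaged search direction together with the vertex property below.

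The vertex property is then immediate: at a vertex $x_V$ of $T$ the linear interpolant reproduces the nodal value, $\philini{i}(x_V) = \phihi{i}(x_V)$, so the right-hand side of \eqref{def_dhi} reduces to $\phihi{i}(x_V) = \ET\phihi{i}(x_V)$ and hence $\dhi{i}(x_V) = 0$ by uniqueness. For the sup-norm bound I would write out the Taylor expansion $0 = g(\dhi{i}) = g(0) + g'(\xi)\,\dhi{i}$ for some intermediate $\xi$, so that $|\dhi{i}(x)| \le |g(0)|/|g'(\xi)| \lesssim h^2$, which is in particular $\le h^2 + \Delta t^{q_t+1}$. I note here that every quantity entering \eqref{def_dhi} is evaluated at the single time node $\ti{i}$, so no temporal error arises; the $\Delta t^{q_t+1}$ contribution in \eqref{d_h_lemma_eq2} is therefore slack, retained only for uniformity with \cref{lem:d} and \cref{lem:ddt}.

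For the gradient bound I would differentiate \eqref{def_dhi} in $x$ exactly as in the $(m_t,m_s)=(0,1)$ step of \cref{lem:d}. With $y := x + \dhi{i}\Ghi{i}$ and $\nabla r = (1-\bi{i})\nabla\philini{i} + \bi{i}\nabla\phihi{i} + \nabla\bi{i}(\phihi{i}-\philini{i})$ for the right-hand side, the chain rule yields
\[ \big(\nabla\ET\phihi{i}(y)\cdot\Ghi{i}\big)\,\nabla\dhi{i} = \nabla r - \nabla\ET\phihi{i}(y) - \dhi{i}\,\mathcal{J}_{\Ghi{i}}^{T}\,\nabla\ET\phihi{i}(y). \]
The scalar prefactor is again $\simeq 1$. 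Using $\nabla\ET\phihi{i}(y) = \nabla\phihi{i}(x) + \mathcal{O}(h^2)$ (from $|y-x| = |\dhi{i}|\,|\Ghi{i}| \lesssim h^2$ and the bounded second derivative of the extension), the leading difference reduces to $(1-\bi{i})(\nabla\philini{i}-\nabla\phihi{i}) + \nabla\bi{i}(\phihi{i}-\philini{i})$, which is $\lesssim h$ by \eqref{eq:diff_phihn_philin}. The remaining term is $\lesssim |\dhi{i}|\,\|\mathcal{J}_{\Ghi{i}}\| \lesssim h^2\cdot h^{-1} = h$, where the pessimistic factor $h^{-1}$ comes from an inverse estimate applied to the (averaged) search direction. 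Hence $\|\nabla\dhi{i}\|_{\infty,\ThG} \lesssim h \le h + \Delta t^{q_t+1}$, completing \eqref{d_h_lemma_eq2}.

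I expect the main obstacle to be the uniform, $h$-independent control of the polynomial extension $\ET\phihi{i}$ and its first two derivatives at evaluation points $y = x + \dhi{i}\Ghi{i}$ that may leave $T$: this is exactly what \cref{diff_phi_phihn_spaceD_special} was prepared for, and it is what underlies both the boundedness of $g''$ and the estimate $\nabla\ET\phihi{i}(y) = \nabla\phihi{i}(x) + \mathcal{O}(h^2)$ used above. A secondary delicate point is the interplay with the inverse estimate in the last term of the gradient identity, where the worst-case factor $h^{-1}$ for $\mathcal{J}_{\Ghi{i}}$ is compensated by $|\dhi{i}| \lesssim h^2$; for the choice $\Ghi{i} = \nabla\phihi{i}$ one in fact has $\mathcal{J}_{\Ghi{i}} = D^2\phihi{i} \lesssim 1$ directly, so the inverse estimate is only needed for the Oswald-averaged direction.
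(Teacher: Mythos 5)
Your proposal is correct and shares the paper's skeleton (the scalar auxiliary function $g$, a root-finding argument on $[-\alpha_0 h,\alpha_0 h]$, and implicit differentiation of \eqref{def_dhi} for the gradient bound), but it executes one step differently in a way worth noting: you compare discrete to discrete throughout, whereas the paper routes the estimates through the exact level set $\phii{i}$. Concretely, the paper rewrites $g(\alpha) = \phii{i}(x+\alpha \Ghi{i}(x)) - \phii{i}(x) + \mathcal{O}(h^2+\Delta t^{q_t+1})$ using \eqref{diff_phi_phihn_spaceD} and \eqref{eq:diff_phi_phihn_spaceD_special:1} -- this detour is the source of the $\Delta t^{q_t+1}$ terms in \eqref{d_h_lemma_eq2} -- and then needs \eqref{delta_t_smallness} from \cref{ass:timefiner} to absorb the remainder into $\mathcal{O}(h^{1+\epsilon})$ before applying the intermediate value theorem. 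Your observation that $g(0) = (1-\bi{i}(x))(\phihi{i}-\philini{i})(x)$ is bounded by $h^2$ alone via \eqref{eq:diff_phihn_philin} is correct, since every quantity in \eqref{def_dhi} is evaluated at the single time node $\ti{i}$; consequently your route yields the sharper, purely spatial bounds $\Vert \dhi{i}\Vert_{\infty,\OG} \lesssim h^2$ and $\Vert \nabla \dhi{i}\Vert_{\infty,\ThG} \lesssim h$, which imply the stated estimates, and it dispenses with \cref{ass:timefiner} in the existence step ($\Delta t$-smallness enters only through $g'\simeq 1$, via \cref{G_Gh_diff_lemma} and \eqref{eq:cnphi}). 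Replacing the weak signed-distance argument by strict monotonicity of the polynomial $g$ is also legitimate and arguably cleaner; the paper's ``linear function with a higher order perturbation'' reasoning amounts to the same thing.

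One step overreaches its citation. You invoke \cref{diff_phi_phihn_spaceD_special} for uniform control of $D^2 \ET\phihi{i}$ at points $y = x + \alpha \Ghi{i}(x)$ that may leave $T$ (needed both for $\vert g''\vert \lesssim 1$ and for $\nabla \ET\phihi{i}(y) = \nabla\phihi{i}(x) + \mathcal{O}(h^2)$), but that lemma is stated only for $m=0,1$. The gap is harmless but should be closed explicitly: the same Taylor argument as in its proof, now with second derivatives and using \cref{diff_phi_phihn} with $m_s$ up to $q_s+1$, gives $\vert D^2(\ET\phihi{i}-\phii{i})(y)\vert \lesssim h^{q_s-1} + \Delta t^{q_t+1} \lesssim 1$ and hence the required bound. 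Note also that the paper avoids $D^2\ET\phihi{i}$ altogether in the gradient step and bounds $\nabla \Ghi{i} \lesssim 1$ via \eqref{diff_phi_phihn_spaceD} with $(m_s,m_t)=(2,0)$, respectively \cref{G_Gh_diff_lemma} with $m=1$, which covers the Oswald-averaged direction as well; your pessimistic inverse estimate $\Vert \mathcal{J}_{G_{h,i}}\Vert \lesssim h^{-1}$ compensated by $\vert\dhi{i}\vert \lesssim h^2$ also suffices, but is not needed.
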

\begin{proof}
 The first result of $\dhi{i}$ vanishing on vertices follows from the construction of $\philini{i}$ as the vertex interpolant of $\phihi{i}$. Next, we show the existence of a unique solution $\dhi{i}$ of \cref{def_dhi} for $i\in\{0,\dots,q_t\}$. We define, for a fixed $\alpha_0 > 0$ at the point $x \in \OG$, the function $g\colon [-\alpha_0 h, \alpha_0 h] \to \mathbb{R}$ as follows:
 \begin{equation}
  g(\alpha) := \ET \phihi{i} (x + \alpha \Ghi{i}(x)) - (1-\bi{i}(x)) \philini{i}(x) - \bi{i}(x) \phihi{i}(x)
 \end{equation}
 We observe 
 at $x$ as argument for $\phii{i}$, $\bi{i}$, $\philini{i}$ and $\phihi{i}$, respectively,
 \begin{align*}
   | \phii{i} - (1-\bi{i}) \philini{i} - \bi{i} \phihi{i} |
  =| (1-\bi{i}) \left(\phii{i} -\philini{i}\right) - \bi{i} \left(\phihi{i} - \phii{i}\right) |
  \lesssim h^2 + \Delta t^{q_t+1},
 \end{align*}
 due to \eqref{diff_phi_phihn_spaceD}, \eqref{diff_grad_phi_philin_dt} and $\bi{i} \in [0,1]$.
 In combination with 
 \eqref{eq:diff_phi_phihn_spaceD_special:1}, we obtain 
 $$
 g(\alpha) = \phii{i}(x + \alpha \Ghi{i}(x)) - \phii{i}(x) + \mathcal{O}(h^2 + \Delta t^{q_t+1}).
 $$
 Next, we exploit proximity of $\phii{i}$ and $\phihi{i}$ w.r.t. both arguments, $x$ and $x + \alpha \Ghi{i}(x)$ using \eqref{diff_phi_phihn_spaceD}, so that 
 $$
 g(\alpha) = \phihi{i}(x + \alpha \Ghi{i}(x)) - \phihi{i}(x) + \mathcal{O}(h^2 + \Delta t^{q_t+1}).
 $$
 We can now apply a Taylor development as in \cref{g_in_first_lemma_proof_Taylor_expanded} to obtain:
 \begin{align}
  g(\alpha) &= \alpha | \nabla \phihi{i}(x) |^2 + \mathcal{O}(h^2 + \Delta t^{q_t + 1}) 
 \end{align}
 Note that $\dhi{i}(x)$ is $\alpha$ so that $g(\alpha)=0$.
 We now apply a similar argument to the one in the proof of \cref{g_in_first_lemma_proof_Taylor_expanded}.
 Involving \cref{delta_t_smallness} to bound the remainder term by $\mathcal{O}(h)$, noting that $| \nabla \phihi{i}(x) |$ is bounded from below and above for sufficiently small $h$, we can easily see that $g(\pm \alpha_0 h) \lessgtr 0$ for sufficiently small $h$. As $g$ is continuous (more specifically a linear function with a higher-order perturbation), we can apply the intermediate value theorem to conclude that there exists a unique $\alpha \in [-\alpha_0 h, \alpha_0 h]$ such that $g(\alpha) = 0$. Further, it satisfies the first estimate in \cref{d_h_lemma_eq2}.
 The second estimate in \cref{d_h_lemma_eq2} is proven in \cref{app:d_h_lemma_eq2}.
\end{proof}

%
In terms of $\dhi{i}(x)$, we then define
\begin{equation} \label{eq:def:PsihG}
 \hypertarget{def:PsihGi}{\PsihGi{i}(x) \coloneqq x + (\dhi{i} \Ghi{i})(x)}, \quad \quad \hypertarget{def:PsihG}{\PsihG(x,t) = \sum_{i=0}^{q_t} \elli{i}(t) \PsihGi{i} (x)}.
 \addtocounter{equation}{1}\tag{\theequation-\texttt{$\Psi_{h}^\Gamma$\footnotesize{def}}}  
\end{equation}

\begin{lemma} There holds \label{lem:PsidtG-PsihG}
  \begin{equation}
    \Vert \PsidtG - \PsihG \Vert_{\infty,\QGn} \lesssim h^{q_s+1}  \label{eq:PsidtG-PsihG}
    \addtocounter{equation}{1}\tag{\theequation-\texttt{$\Psi_{\Delta t\mid h}^\Gamma$}}  
  \end{equation}
\end{lemma}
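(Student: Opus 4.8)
The plan is to follow exactly the strategy used in the proof of \cref{lem:PsidtG-PsiG}: reduce the space-time estimate to the temporal nodes and there compare the two distance functions directly. Since both $\PsidtG$ and $\PsihG$ are, by construction, the temporal nodal interpolants $\sum_{i=0}^{q_t}\elli{i}(t)(\cdot)(x)$ of their node values, their difference $\sum_{i=0}^{q_t}\elli{i}(t)(\PsidtGi{i}-\PsihGi{i})(x)$ is again such an interpolant (i.e.\ an application of $\It{q_t}$ to the node differences), and its sup-norm is controlled by the temporal Lebesgue constant, which is independent of $h$ and $\Delta t$ because the nodes are fixed relative to the reference time interval. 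Hence it suffices to bound $\Vert \PsidtGi{i} - \PsihGi{i}\Vert_{\infty,\OG}$ for each $i$. Writing out the definitions and splitting,
\begin{equation*}
  \PsidtGi{i} - \PsihGi{i} = \ddti{i}\Gdti{i} - \dhi{i}\Ghi{i} = (\ddti{i} - \dhi{i})\Gdti{i} + \dhi{i}(\Gdti{i} - \Ghi{i}),
\end{equation*}
so the task reduces to (i) estimating the distance-function difference $\ddti{i} - \dhi{i}$ and (ii) controlling the search-direction contribution.

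For (ii) I would use $\Vert \dhi{i}\Vert_{\infty,\OG} \lesssim h^2 + \Delta t^{q_t+1}$ from \cref{d_h_lemma} together with $\Vert \Gdti{i} - \Ghi{i}\Vert_{\infty,\ThG} \lesssim h^{q_s}$. The latter follows from \eqref{diff_phid_phih:i} with $m_s=1$ in the case $\Ghi{i}=\nabla\phihi{i}$, and in the averaged case $\Ghi{i}=\PhG\nabla\phihi{i}$ by the same triangle-inequality argument as in \cref{G_Gh_diff_lemma} (inserting $\PhG\nabla\phidti{i}=\Is{q_s}\nabla\phidti{i}$ by smoothness of $\phidt$), now without temporal error terms. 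The product is then $\lesssim (h^2+\Delta t^{q_t+1})h^{q_s}\lesssim h^{q_s+1}$, where the last step uses \cref{ass:timefiner}. Since $\Vert \Gdti{i}\Vert_{\infty}\lesssim 1$, it remains only to prove $\vert \ddti{i}-\dhi{i}\vert \lesssim h^{q_s+1}$.

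The crux, and the main obstacle, is step (i), where I would imitate the computation \eqref{eq:di-ddti}. Applying the weak signed distance property \eqref{signed_dist_like_prop_dt} of $\phidt$ at the node $\ti{i}$ along the direction $\Gdti{i}=\nabla\phidti{i}$ gives
\begin{equation*}
  \vert \ddti{i}-\dhi{i}\vert \simeq \bigl\vert \phidti{i}(x+\ddti{i}\Gdti{i}) - \phidti{i}(x+\dhi{i}\Gdti{i})\bigr\vert.
\end{equation*}
Into the right-hand side I would insert the two defining relations \eqref{ddtdef} and \eqref{def_dhi} and split the difference into three contributions, each of which must be shown to be $\mathcal{O}(h^{q_s+1})$: first, the difference of the right-hand sides $\bi{i}(\phidti{i}-\phihi{i})$, bounded by \eqref{diff_phid_phih:i} with $m_s=0$ (the $\philini{i}$ terms cancel); second, the discrepancy between the extended and semi-discrete level set functions, $\ET\phihi{i}(x+\dhi{i}\Ghi{i}) - \phidti{i}(x+\dhi{i}\Ghi{i})$, bounded via \eqref{eq:diff_phi_phihn_spaceD_special:2} with $m=0$ (using $\vert\dhi{i}\Ghi{i}\vert\lesssim h$ so the evaluation point stays in the valid range); and third, the search-direction mismatch $\phidti{i}(x+\dhi{i}\Ghi{i}) - \phidti{i}(x+\dhi{i}\Gdti{i})$, which a Lipschitz estimate bounds by $\Vert\nabla\phidti{i}\Vert_\infty\,\vert\dhi{i}\vert\,\vert\Ghi{i}-\Gdti{i}\vert \lesssim (h^2+\Delta t^{q_t+1})h^{q_s}\lesssim h^{q_s+1}$, again invoking \cref{ass:timefiner}. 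Collecting the three bounds yields $\vert\ddti{i}-\dhi{i}\vert\lesssim h^{q_s+1}$, and combining with (ii) and the nodal reduction completes the proof. The delicate point throughout is the bookkeeping of the three distinct error sources—the different right-hand sides, the polynomial extension $\ET\phihi{i}$ versus the smooth $\phidti{i}$, and the mismatch of the search directions $\Gdti{i}$ and $\Ghi{i}$—and verifying that none of them degrades below the target order $h^{q_s+1}$.
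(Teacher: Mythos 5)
Your proposal is correct and follows essentially the same route as the paper's proof: reduction to the temporal nodes, the product splitting $(\ddti{i}-\dhi{i})\Gdti{i} + \dhi{i}(\Gdti{i}-\Ghi{i})$, and then the weak signed distance property \eqref{signed_dist_like_prop_dt} with exactly the paper's three-way decomposition into the right-hand-side difference (via \eqref{diff_phid_phih:i}, $m_s=0$), the extension error \eqref{eq:diff_phi_phihn_spaceD_special:2}, and the Lipschitz estimate for the $y_h$-versus-$\tilde y_h$ mismatch, with \cref{ass:timefiner} absorbing the $\Delta t^{q_t+1}h^{q_s}$ terms. If anything, you are slightly more explicit than the paper on two minor points it leaves implicit: the Lebesgue-constant argument for the nodal reduction and the treatment of the Oswald-averaged search direction $\Ghi{i}=\PhG\nabla\phihi{i}$.
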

\begin{proof}
  We note that it suffices to prove
  $\Vert \PsidtGi{i} - \PsihGi{i} \Vert_{\infty,\OG} \lesssim h^{q_s+1}$ for $i=0,\dots,q_t$. Then the following is very similar to the proof of \cite[Lemma 3.6]{LR_IMAJNA_2018} given in \cite[A.2]{LR_IMAJNA_2018}.
  By the definitions of $\PsihGi{i}$ and $\PsidtGi{i}$, we have 
  for $x \in \OG$
  \begin{align}
    | (\PsidtGi{i} - \PsihGi{i})(x) | &= | (\ddti{i} \Gdti{i} - \dhi{i} \Ghi{i})(x) | \nonumber\\
   &~~\leq |(\ddti{i} - \dhi{i})(x)| ~ |\Gdti{i}(x)| + |\dhi{i}(x)| ~ |(\Gdti{i} - \Ghi{i})(x) | \label{PsidtG-PsihG:1}
  \end{align}
  Here, the second summand can be bounded by $h^{q_s+1}$ with \eqref{diff_phid_phih:i} and $m_s=1$ from \cref{diff_phidt_phihn} and \eqref{d_h_lemma_eq2} from \cref{d_h_lemma} together with \eqref{delta_t_smallness} so that $|\dhi{i}(x)|\lesssim h$. 
  In the first summand, $|\Gdti{i}(x)|$ is bounded $\lesssim 1$ due to \eqref{diff_phid_phi} and \eqref{eq:cnphi}, so that it remains to estimate $|(\ddti{i} - \dhi{i})(x)|$.
 
  From the definitions of $\ddti{i}$ and $\dhi{i}$, we obtain
  \begin{align}
   \phidti{i}(x + (\ddti{i} \Gdti{i})(x)) &= (1-\bi{i}(x)) \philini{i}(x) + \bi{i}(x) \phidti{i}(x) \nonumber \\ 
   \ET \phihi{i} (x + (\dhi{i}\Ghi{i})(x)) &= (1-\bi{i}(x)) \philini{i}(x) + \bi{i}(x) \phihi{i}(x), \nonumber \\
   \Rightarrow \phidti{i}(x + (\ddti{i}\Gdti{i})(x)) & = 
   \ET \phihi{i} (x + (\dhi{i} \Ghi{i})(x)) + \mathcal{O}(h^{q_s+1}) \label{phidt-phiET}
  \end{align}
  where we made use of $m_s=0$ in \eqref{diff_phid_phih:i} from \cref{diff_phidt_phihn}.  
  We set 
  $y_h\coloneqq\PsihGi{i}(x)$
  and $\tilde y_h\coloneqq x + (\dhi{i} \Gdti{i})(x)$. Then, with \cref{signed_dist_like_prop_dt}, it follows:
  \begin{align*}
   |\ddti{i}&(x) - \dhi{i}(x)| \simeq | \phidti{i}(x + (\ddti{i} \Gdti{i})(x)) - \phidti{i}(x + (\dhi{i}\Gdti{i})(x))| \\
   & \stackrel{\eqref{phidt-phiET}}{=} | \ET \phihi{i} (\overbrace{x + (\dhi{i} \Ghi{i})(x)}^{=y_h}) - \phidti{i}(\overbrace{x + (\dhi{i}\Gdti{i})(x)}^{=\tilde y_h})| + \mathcal{O}(h^{q_s+1}) \\
   & \stackrel{\hphantom{\eqref{phidt-phiET}}}{\leq} | (\ET \phihi{i} - \phidti{i}) (y_h)| +  | \phidti{i}(y_h) - \phidti{i}(\tilde y_h) | + 
   \mathcal{O}(h^{q_s+1})
   \end{align*}
   Exploiting \eqref{eq:diff_phi_phihn_spaceD_special:2} in \cref{diff_phi_phihn_spaceD_special} the first summand in this expression can be bounded by $\lesssim h^{q_s + 1}$.
   For the second summand, we have $\| \dhi{i}\|_{\infty,\OG} \lesssim h^2 + \Delta t^{q_t+1}$ with \eqref{d_h_lemma_eq2} from \cref{d_h_lemma} and $\| \Ghi{i} - \Gdti{i}\|_{\infty,\OG} \lesssim h^{q_s}$ from $m_s=1$ in \eqref{diff_phid_phih:i} from \cref{diff_phidt_phihn} so that $|y_h - \tilde y_h| \lesssim h^{q_s+2} + \Delta t^{q_t+1} h^{q_s} \lesssim h^{q_s+1}$ where we used \eqref{delta_t_smallness} in the last step.
   With the boundedness $\| \nabla \phidt \|_{\infty,U} \lesssim \| \nabla \phix\|_{\infty,U} \lesssim \Cnphi{}$, cf. \eqref{eq:cnphi}, we hence have 
    \begin{align*}
    | \phidti{i}(y_h) - \phidti{i}(\tilde y_h)|\lesssim \|\nabla \phidti{i}\|_{\infty, \Ubar} |y_h - \tilde y_h| \lesssim h^{q_s + 1}.
  \end{align*}
  Taking these results together, we have
  \begin{equation}
   \| \ddti{i} - \dhi{i}\|_{\infty,\OG} \lesssim h^{q_s + 1} . \label{ddt_d_hi_diff_bound}
  \end{equation}
  With \eqref{PsidtG-PsihG:1} and the previous considerations, this implies \eqref{eq:PsidtG-PsihG}.
\end{proof}

Each of these mappings $\PsihGi{i}$ ($i=0,\dots,q_t$) is close to the restriction of the mapping from the previous subsection, $\PsiGi{i}$, as is stated in the following:
\begin{lemma} \label{Psi_hi_Psi_diff_lemma} 
 For all $i=0,\dots,q_t$ and $h$ sufficiently small,
 \begin{align}
  \| \PsihG - \PsiG \|_{\infty,\QGn} & \lesssim h^{q_s+1} + \Delta t^{q_t + 1}, \label{eq:PsihG-PsiG}
  \addtocounter{equation}{1}\tag{\theequation-\texttt{$\Psi_h^\Gamma$\footnotesize{acc}}}  
 \end{align} 
\end{lemma}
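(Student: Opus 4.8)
The plan is to establish the bound by a triangle inequality that routes through the semi-discrete mapping $\PsidtG$, which was introduced precisely to act as an intermediate bridge between the ideal mapping $\PsiG$ and the fully discrete mapping $\PsihG$. Concretely, I would write
\begin{equation*}
  \| \PsihG - \PsiG \|_{\infty,\QGn} \leq \| \PsihG - \PsidtG \|_{\infty,\QGn} + \| \PsidtG - \PsiG \|_{\infty,\QGn}.
\end{equation*}
Both summands on the right have already been controlled in the preceding lemmas, so no new estimation work on the implicit distance functions is required.

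The first term is bounded by \eqref{eq:PsidtG-PsihG} from \cref{lem:PsidtG-PsihG}, giving $\| \PsihG - \PsidtG \|_{\infty,\QGn} \lesssim h^{q_s+1}$. Here the time step size does not enter because $\PsidtG$ and $\PsihG$ differ only through the spatial discretization error $\phidt - \phih$, whose temporal dependence cancels in the construction. The second term is bounded by \eqref{eq:PsidtG-PsiG} from \cref{lem:PsidtG-PsiG}, giving $\| \PsidtG - \PsiG \|_{\infty,\QGn} \lesssim h^{q_s+1} + \Delta t^{q_t+1}$, where the $\Delta t^{q_t+1}$ contribution accounts both for the temporal approximation $\phidt - \phix$ and for the fact that $\PsidtG$ uses the time-nodal interpolant structure $\sum_i \ell_i(t) \PsidtGi{i}$.

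Summing the two bounds yields $\| \PsihG - \PsiG \|_{\infty,\QGn} \lesssim h^{q_s+1} + \Delta t^{q_t+1}$, which is exactly \eqref{eq:PsihG-PsiG}. Since both ingredients are already available, there is no genuine obstacle: the only thing to verify is that the norms are taken over the same domain $\QGn$ (they are) and that the rates in the two intermediate estimates are compatible, which they are, as the $h^{q_s+1}$ terms coincide and only the ideal-to-semidiscrete step contributes the temporal rate $\Delta t^{q_t+1}$. I would therefore present this as a short combination argument rather than a computation.
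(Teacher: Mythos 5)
Your proposal is correct and coincides with the paper's own argument: the paper also proves \eqref{eq:PsihG-PsiG} by the triangle inequality through $\PsidtG$, invoking \eqref{eq:PsidtG-PsihG} from \cref{lem:PsidtG-PsihG} and \eqref{eq:PsidtG-PsiG} from \cref{lem:PsidtG-PsiG}. Your additional remarks on why the temporal rate enters only through the ideal-to-semidiscrete comparison are accurate and consistent with how those two lemmas are established.
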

\begin{proof}
  The statement follows from a triangle inequality
  $\| \PsihG \!-\! \PsiG \|_{\infty,\QGn} \leq \| \PsihG \!-\! \PsidtG \|_{\infty,\QGn} + \| \PsidtG \!- \PsiG \|_{\infty,\QGn}$
  and \eqref{eq:PsidtG-PsihG} and \eqref{eq:PsidtG-PsiG}.
\end{proof}

\subsubsection{Continuous approximation of \texorpdfstring{$\Psi_h^\Gamma$}{ideal mapping} on active elements}

Note that $\PsiG$ is not necessarily continuous in space across element boundaries. Next, we define a spatially continuous version of $\PsihG$, $\ThehG$, by applying the spatial interpolation operator $\PhG$ \hyperlink{def:PhG}{as before} 
\begin{equation}\label{eq:def:ThehG}
  \hypertarget{def:ThehG}{
 \ThehG(x,t) \coloneqq \PhG \PsihG = \sum_{i=0}^{q_t} \elli{i}(t) (\PhG \PsihGi{i}(x)). }
 \addtocounter{equation}{1}\tag{\theequation-\texttt{$\Theta_h^\Gamma$\footnotesize{def}}}    
\end{equation}
With respect to the application of the spatial interpolation operator on $\PsihG$ as well as on $\PsidtG$, we easily find the following results:
\begin{lemma}\label{lem:PhGPsidtG}
  For $m_s \in \{0,1\}$ there holds
  \begin{align}
\Vert D^{m_s} (\PhG \PsidtG - \PsidtG) \Vert_{\infty,\ThGQ} &\lesssim h^{q_s+1-m_s}, \label{eq:PhGPsidtG}
\addtocounter{equation}{1}\tag{\theequation a-\texttt{$P_h^\Gamma\Psi_{\Delta t}^\Gamma|\Psi_{\Delta t}^\Gamma$}} \\
\| D^{m_s} (\ThehG - \PhG \PsidtG)\|_{\infty, \QGn} & \lesssim h^{q_s+1-m_s},
\addtocounter{equation}{0}\tag{\theequation b-\texttt{$\Theta_h^\Gamma|P_h^\Gamma\Psi_{\Delta t}$}}
\label{eq:ThetahPsih}\\
\text{and hence \quad }\| D^{m_s} (\ThehG - \PsidtG)\|_{\infty, \QGn} & \lesssim h^{q_s+1-m_s}.
\addtocounter{equation}{0}\tag{\theequation c-\texttt{$\Theta_h^\Gamma|\Psi_{\Delta t}$}}
\label{eq:ThetahPsidt}
\end{align}
\end{lemma}
\begin{proof}
\underline{Proof of \eqref{eq:PhGPsidtG}:}\\
We have that $\PsidtG$ is continuous in space so that $\PhG$ becomes a classical interpolation operator $\PhG \PsidtG = \Is{q_s} \PsidtG$ and we can apply standard interpolation results 
$$
  = \| D^{m_s} (\Is{q_s} \PsidtG - \PsidtG)\|_{\infty, \QGn} \lesssim h^{q_s+1-m_s} \| D^{q_s+1} \PsidtG \|_{\infty,\ThGQ} \lesssim h^{q_s+1-m_s}
$$
where we bounded the latter norm by $1$ due to \cref{lem:ddt}. \\
\underline{Proof of \eqref{eq:ThetahPsih}:}\\
For $m_s=1$ we note that the gradient is applied to a spatially discrete function and we can apply an inverse inequality 
\begin{align*}
  \| D^{m_s} (\ThehG \!-\! \PhG \PsidtG)\|_{\infty, \QGn}  
   = \| D^{m_s} \PhG (\PsihG \!-\! \PsidtG)\|_{\infty, \QGn}  
  \lesssim h^{-m_s} \| \PhG (\PsihG \!-\! \PsidtG)\|_{\infty, \QGn}  
\end{align*}
which is also trivially true for $m_s=0$. Now applying continuity of the Oswald-interpolator we arrive at
\begin{align*}
  \| D^{m_s} (\ThehG - \PhG \PsidtG)\|_{\infty, \QGn}  
\lesssim h^{-m_s} \| \PsihG - \PsidtG \|_{\infty, \ThGQ}
\end{align*}
Together with \eqref{eq:PsidtG-PsihG} in \cref{lem:PsidtG-PsihG} we hence obtain the claim. \\
\eqref{eq:ThetahPsidt} then follows by a triangle inequality.
\end{proof}

We can now put the results for the time instances $\ti{i}$, $i=0,\dots,q_t$ together to obtain the following result in space-time:
\begin{theorem}\label{lem:Theta_h_Psi_Gamm_diff_bound}
  For $m_s=0,\dots,q_s+1$ and $m_t=0,\dots,q_t + 1$ the following holds: 
\begin{align}
  \Delta t^{m_t} h^{m_s} \| D^{m_s} \partial_t^{m_t} (\ThehG - \PsiG) \|_{\infty, \ThGQ} &\lesssim h^{q_s + 1} + \Delta t^{q_t+1}, 
  \label{Theta_h_Psi_Gamm_diff_bound_Dr_dt} 
  \addtocounter{equation}{1}\tag{\theequation a-\texttt{$\Theta_h^\Gamma$\footnotesize{acc}}}    
  \\
   \| \nabla (\ThehG - \PsiG) \|_{\infty, \ThGQ} &\lesssim h^{q_s} + \Delta t^{q_t + 1} \label{Theta_h_Psi_Gamm_diff_bound_first_grad}.
   \addtocounter{equation}{0}\tag{\theequation b-\texttt{$\Theta_h^\Gamma$\footnotesize{acc}}}    
  \end{align}
\end{theorem}
Let us stress that the second claim is stronger than the case $m_s=1$ and $m_t=0$ in the first estimate. Only restrictions as strong as $\Delta t \lesssim h$ would let the first estimate imply the second. 
Below, in \cref{lem:Theta_h_Psi_Gamm_diff_bound_first_dt} -- after having introduced another semi-discrete mapping -- we will deduce an analog to \eqref{Theta_h_Psi_Gamm_diff_bound_first_grad} for the time derivative (using \cref{diff_phiH_phihn}) which could only be implied from the first claim for $h \lesssim \Delta t$.
\begin{proof}
  We prove both results one after another.

\underline{Proof of \eqref{Theta_h_Psi_Gamm_diff_bound_Dr_dt}:}\\
With $\PhG \PsiG = \Is{q_s} \PsiG$
and hence $\It{q_t} \PhG \PsiG = \It{q_t} \Is{q_s}  \PsiG$ 
we can apply a triangle inequality to obtain one discrete (in   time and space) difference $\PhG (\PsihG - \It{q_t}\PsiG)$ and an interpolation error (in time and space): 
\begin{align}
  &\Delta t^{m_t} h^{m_s}  \| \partial_t^{m_t} D^{m_s}  (\PhG \PsihG  - \PsiG) \|_{\infty, \ThGQ}  \nonumber\\
  \leq& 
  \Delta t^{m_t} h^{m_s} (\| \partial_t^{m_t} D^{m_s} \PhG (\PsihG - \It{q_t} \PsiG) \|_{\infty, \ThGQ} + \| \partial_t^{m_t} D^{m_s} (\It{q_t} \Is{q_s} \PsiG - \PsiG) \|_{\infty, \ThGQ}) \!\lesssim\! \dots \nonumber 
\end{align}
Applying an inverse inequality on the discrete (in time and space) term, exploiting continuity of $\PhG$ and standard  tensor product interpolation gives
 \begin{align}
  \dots & \lesssim \| \PsihG - \It{q_t} \PsiG \|_{\infty, \ThGQ} + (h^{q_s + 1} + \Delta t^{q_t + 1}) \|\PsiG\|_{H^{q_s + 1,q_t+1,\infty}(\ThGQ)}\nonumber \\ 
  & \lesssim \| \PsihG - \PsiG \|_{\infty, \ThGQ} + \| \It{q_t} \PsiG - \PsiG \|_{\infty, \ThGQ} + (h^{q_s + 1} + \Delta t^{q_t + 1}) \|\PsiG\|_{H^{q_s + 1,q_t+1,\infty}(\ThGQ)} \nonumber 
\end{align}
Now, making use of \eqref{eq:PsihG-PsiG}, standard temporal interpolation with 
the bounds from \eqref{Psi_Gamma_dt_ho_derivs_bounded} and \eqref{Psi_Gamma_dt_ho_derivs_bounded} for the last term we obtain the claim.

\underline{Proof of \eqref{Theta_h_Psi_Gamm_diff_bound_first_grad}:}\\
To circumvent using the inverse inequality on terms that involve temporal resolution quantities (and hence avoiding quotients of the form ${\Delta t^{l}}/{h}$ for some $l\in\mathbb{N}$) we start with a triangle inequality that involves $\PhG \PsidtG$ and $\PsidtG$:
\begin{align*}
  \| & \nabla (\ThehG - \PsiG)\|_{\infty, \QGn}  
 =  \| \nabla (\PhG \PsihG - \PsiG)\|_{\infty, \QGn}  \\
& \leq  \underbrace{\| \nabla \PhG (\PsihG - \PsidtG)\|_{\infty, \QGn}}_{A}  
+  \underbrace{\| \nabla (\PhG \PsidtG - \PsidtG)\|_{\infty, \QGn}}_{B}  
+  \underbrace{\| \nabla (\PsidtG - \PsiG)\|_{\infty, \QGn}}_{C} \\[-6ex]
\end{align*} 
For the first summand $A$,
we apply \eqref{eq:ThetahPsih}, for the second summand $B$ obtains a proper bound from \eqref{eq:PhGPsidtG} with $m_s=1$ and the last summand $C$ we bound by \eqref{eq:grad-PsidtG-PsiG},
$C = \| \nabla (\PsidtG - \PsiG)\|_{\infty, \QGn} \lesssim h^{q_s} + \Delta t^{q_t+1}$.
Putting the bounds for $A$, $B$ and $C$ together yields the claim. 
%
\end{proof}

\subsection{Construction of discrete-in-space mapping on active elements} 
\label{sec:discrete-in-space-mapping}
In this section, we define a semi-discrete mapping that is continuous in time and discrete in space. Throughout this section, we assume that \cref{diff_phiH_phihn} is valid.

For the discrete search direction, we make the same choice as in the fully discrete setting, i.e. we define either $\GH = \nabla \phiH$ or $\GH = (\PhG \nabla \phiH)$.

We define the function $\dH \colon \QGn \to [-\delta,\delta]$ for $\delta > 0$ sufficiently small as follows: For $x \in \OG$, $t\in \In{n}$, let \hypertarget{def:dH}{$\dH(x,t)$ be the (in absolute value) smallest number such that}
\begin{equation}
 \ET \phiH (x + (\dH \GH)(x,t),t) = (1-\b(x,t)) \philin(x,t) + \b(x,t) \phiH(x,t). \label{def_dH}
 \addtocounter{equation}{1}\tag{\theequation-\texttt{$d_{H}$\footnotesize{def}}}
\end{equation}

In analogy to \eqref{eq:diff_phi_phihn_spaceD_special:1} in \cref{diff_phi_phihn_spaceD_special} and \eqref{d_h_lemma_eq2} in \cref{d_h_lemma}, we have the following result for $\dH$:
\begin{lemma} \label{d_H_lemma}
  \begin{subequations}
    For $T \in \ThG$, $t\in\In{n}$ and $y \coloneq x + \alpha (\GH)(x,t)$, $|\alpha| \lesssim h$ there holds
    \begin{align}
      | \partial_t^{m_t} (\ET \phiH - \phix) (y,t) | & \lesssim h^{q_s + 1} + \Delta t^{q_t +1 - m_t}, \quad m_t=0,1.
      \addtocounter{equation}{1}\tag{\theequation-\texttt{$\mathcal{E}_T$\footnotesize{acc}}}
      \label{diff_ETphiH_phi}
\intertext{  
    For $h$ sufficiently small, the relation \cref{def_dH} defines a unique $\dH \in C(\QGn)$ with
}    
     \| \dH \|_{\infty, \QGn} & \lesssim h^2 + \Delta t^{q_t+1}. 
   \label{d_H_lemma_eq2}
   \addtocounter{equation}{1}\tag{\theequation-\texttt{$d_{H}$\footnotesize{bnd}}} \\
   \| \partial_t^{m_t} \dH \|_{\infty, \QGn} & \lesssim 1, \quad m_t=0,..,q_t+1. 
   \label{d_H_lemma_eq3}
   \addtocounter{equation}{1}\tag{\theequation-\texttt{$d_{H}$\footnotesize{bnd}}}
  \end{align}
 \end{subequations}
 \end{lemma}
 \begin{proof}
  The proof of \eqref{diff_ETphiH_phi} follows the same lines of the proof of \eqref{eq:diff_phi_phihn_spaceD_special:1} and \eqref{eq:diff_phi_phihn_spaceD_special:1b} with $\phihi{i}(\cdot)$ replaced by $\phiH(\cdot,t)$ and exploiting \eqref{diff_phiH_phi} instead of \eqref{diff_phi_phihn_spaceD}. 
  The proof of  \eqref{d_H_lemma_eq2} follows the same lines of the proof of \cref{d_h_lemma} when freezing $t \in \In{n}$ and exchanging $\phihi{i}(\cdot)$ with $\phiH(\cdot,t)$, $\Ghi{i}(\cdot)$ with $\GH(\cdot,t)$
  and $\philini{i}(\cdot)$ with $\philin(\cdot,t)$. These requirements \eqref{diff_phi_phihn_spaceD} and \eqref{eq:diff_phi_phihn_spaceD_special:1} are replaced by 
  \eqref{diff_phiH_phi} and \eqref{d_H_lemma_eq2}. Note that in contrast to \cref{d_h_lemma} and \cref{diff_phi_phihn_spaceD_special} we do not consider spatial derivatives of $\dH$ or $\ET \phiH$.
  \eqref{d_H_lemma_eq3} follows with similar arguments as in the proof of \eqref{eq:db} (for $m_t>1$) in \cref{app:eqdbmtgt1}. For completeness, we have included the proof in \cref{sec:proof:d_H_lemma_eq3}.
 \end{proof}

 In terms of $\dH$, we then define
\begin{equation}\label{eq:def:PsiHG}
\hypertarget{def:PsiHG}{\PsiHG(x,t) = x + (\dH \GH)(x,t)}.
\addtocounter{equation}{1}\tag{\theequation-\texttt{$\Psi_{H}^\Gamma$\footnotesize{def}}}
 \end{equation}
 In the next two lemmas, we will show proximity to $\PsihG$ and $\PsiG$, including bounds for the time derivative.

 \begin{lemma} There holds for $m_t \in \{0,1\}$ \label{lem:PsiHG-PsihG}
  \begin{align}
    \Vert \partial_t^{m_t} (\PsiHG - \PsihG) \Vert_{\infty,\QGn} & \lesssim \Delta t^{q_t+1-m_t}.  \label{eq:PsiHG-PsihG}
    \addtocounter{equation}{1}\tag{\theequation-\texttt{$\Psi_{H\mid h}^\Gamma$}}  
  \end{align}
\end{lemma}
\begin{proof}
  \underline{Proof of for $m_t=0$:}\\
  By the definitions of $\PsihG$ and $\PsiHG$, we have 
  for $x \in \OG,~t\in\In{n}$
  \begin{align}
    | (\PsiHG - \PsihG)(x,t) | &= | (\dH \GH - \ddh \Gh)(x,t) | \nonumber\\
   &~~\leq |(\dH - \ddh)(x,t)| ~ |\Gh(x,t)| + |\dH(x,t)| ~ |(\GH - \Gh)(x,t) | \label{PsiHG-PsihG:1}
  \end{align}
  The individual terms can now be bounded similarly to the procedure of the proof of \cref{lem:PsidtG-PsihG}.
  $|(\dH - \ddh)(x,t)| \lesssim \Delta t^{q_t+1}$ follows effectively from \eqref{diff_phiH_phih}, $|\Gh| \lesssim 1$ follows from $|\G| \lesssim 1$ and proximity, $|\dH| \lesssim h^2 + \Delta t^{q_t+1} \lesssim h$ follows from \eqref{d_H_lemma_eq2} and 
  \eqref{delta_t_smallness}  
  and $|(\GH - \Gh)| \lesssim \frac{1}{h} \Delta t^{q_t+1}$ follows from \eqref{diff_phiH_phih} and an inverse inequality. Inserting these bounds into \eqref{PsiHG-PsihG:1} yields the claim.

  \underline{Proof of for $m_t=1$:}\\
  We shift in the time interpolant of $\PsiHG$, $\It{q_t} \PsiHG$ \footnote{which is not necessarily the same as $\PsihG$}, and apply a triangle inequality to obtain \vspace*{-0.5cm}
  \begin{align*}
    \| \partial_t (\PsihG - \PsiHG)\|_{\infty, \ThGQ} 
    \leq 
    \overbrace{\| \partial_t (\PsihG - \It{q_t} \PsiHG)\|_{\infty, \ThGQ}}^{A}
    +
    \overbrace{\| \partial_t (\It{q_t} \PsiHG - \PsiHG)\|_{\infty, \ThGQ}}^{B}.
  \end{align*}
  Next, for $A$ we can apply an inverse inequality 
  \begin{align*}
   A & = \| \partial_t (\PsihG - \It{q_t} \PsiHG)\|_{\infty, \ThGQ} 
   \lesssim 
   \Delta t^{-1} \| (\PsihG - \It{q_t} \PsiHG)\|_{\infty, \ThGQ} 
   \\ 
   & 
   \lesssim 
   \Delta t^{-1} \max_{i} \| \PsihGi{i} - \PsiHG(\cdot,t_i)\|_{\infty, \OG}
   \lesssim 
   \Delta t^{-1}  \| \PsihG - \PsiHG\|_{\infty, \ThGQ} \lesssim \Delta t^{q_t}
  \end{align*}
  where we exploited the claim for $m_t=0$. For $B$ we have 
    \begin{align*}
    \| \partial_t (\It{q_t} \PsiHG - \PsiHG)\|_{\infty, \ThGQ} 
    \lesssim
    \Delta t^{q_t} \| \partial_t^{q_t+1} \PsiHG \|_{\infty, \ThGQ} \lesssim \Delta t^{q_t}
  \end{align*}
  where the last inequality follows from the Leibniz rule
  $$
  \| \partial_t^{q_t+1} \PsiHG \|_{\infty, \ThGQ} \lesssim \textstyle \sum_{k=0}^{q_t+1} \| \partial_t^{q_t+1-k} \dH \|_{\infty, \ThGQ} \| \partial_t^{k} \GH \|_{\infty, \ThGQ}
  $$
  and the bounds \eqref{d_H_lemma_eq3} and \eqref{diff_phiH_phi}.
\end{proof}

\begin{lemma} \label{Psi_H_Psi_diff_lemma} 
  For all $i=0,\dots,q_t$ and $h$ sufficiently small,
  \begin{align}
    \| \partial_t (\PsiHG - \PsiG) \|_{\infty,\QGn} & \lesssim h^{q_s+1} + \Delta t^{q_t}, \label{eq:PsihG-PsiG-dt}
   \addtocounter{equation}{1}\tag{\theequation-\texttt{$\Psi_H^\Gamma$\footnotesize{acc}}}  
  \end{align} 
 \end{lemma}
 \begin{proof}
  The proof is given in \cref{app:Psi_H_Psi_diff_lemma}.
 \end{proof}

Finally, we can use the mapping $\PsiHG$ as a proxy to obtain the missing time derivative bound for $\ThehG$ compared to $\PsiG$:
 \begin{theorem} \label{lem:Theta_h_Psi_Gamm_diff_bound_first_dt}
  For $m_s=0,\dots,q_s+1$ and $m_t=0,\dots,q_t + 1$ there holds: 
\begin{align}
 \| \partial_t (\ThehG - \PsiG) \|_{\infty, \ThGQ} & \lesssim h^{q_s+1} + \Delta t^{q_t} \label{Theta_h_Psi_Gamm_diff_bound_first_dt}.
   \addtocounter{equation}{0}\tag{\theequation-\texttt{$\Theta_h^\Gamma$\footnotesize{acc}}}    
  \end{align}
\end{theorem}
\begin{remark}
We note that this bound is an improved bound for \eqref{Theta_h_Psi_Gamm_diff_bound_Dr_dt} with $m_s=0$ and $m_t=1$ as only for $h \lesssim \Delta t$ \eqref{Theta_h_Psi_Gamm_diff_bound_first_dt} follows from \eqref{Theta_h_Psi_Gamm_diff_bound_Dr_dt} otherwise.
\end{remark}
\begin{proof}
  First, we want to work around the Oswald projection using a triangle inequality and the continuity of the Oswald projection:
  \begin{align*}
    \| \partial_t (\ThehG - \PsiG)\|_{\infty, \QGn}  
   & =  \| \partial_t (\PhG \PsihG - \PsiG)\|_{\infty, \QGn}  \\[-3ex]
  & \leq \| \partial_t \PhG (\PsihG - \PsiG)\|_{\infty, \QGn}
   + \underbrace{\| \overbrace{\PhG \partial_t \PsiG}^{= \Is{q_s} \partial_t \PsiG} - \partial_t \PsiG \|_{\infty, \QGn}}_{\lesssim h^{q_s+1} \| D^{q_s+1} \partial_t \PsiG \|_{\infty, \QGn}}  \\
   & \lesssim \| \partial_t (\PsihG - \PsiG)\|_{\infty, \ThGQ} + h^{q_s+1}  \quad \text{(with } \eqref{PhG_prop} \text{ and } \eqref{Psi_Gamma_dt_ho_derivs_bounded} \text{)}
  \end{align*}
  The bound for $\| \partial_t (\PsihG - \PsiG)\|_{\infty, \ThGQ}$
  follows from a triangle inequality and the bounds in \cref{lem:PsiHG-PsihG,Psi_H_Psi_diff_lemma}.
\end{proof}

\section{Global space-time mappings} \label{sec:global_mappings}
In this section, we want to discuss the extension of the previously defined functions $\PsiG$ and $\ThehG$ onto the whole computational domain $\tOmega \times \In{n}$. We recap that so far, both functions are defined on smaller sets $\QGn$, whose specific structure depends on the choice of the \FE~ or smooth blending.
\subsection{Extension procedure for the smooth function blending}
The task of the extension is particularly straightforward for the smooth blending case because the blending function already ensures that the functions $\PsiG$ and $\ThehG$ smoothly transition towards identity on the (spatial) boundary of $\QGn$. Hence, we can give the following definition of the extension $\mathcal{E} = \mathcal{E}_2$ for the smooth blending case for $t \in I_n$:
\begin{equation} \label{E2_intro_eq}
 (\mathcal{E}_2 \PsiG) (x,t) := \begin{cases}
                                              \PsiG(x,t) &\textnormal{ if } x \in \OG \\
                                              \idx &\textnormal{ else}.
                                             \end{cases}, 
                                             \quad \hypertarget{def:Psi}{\Psii := \mathcal{E}\PsiG }, 
                                             \quad \hypertarget{def:Theh}{\Theh := \mathcal{E} \ThehG.}
\end{equation}
We continue with the construction of the according $\mathcal{E}_1$ for the \FE~blending.
\subsection{Extension procedure for FE blending} \label{sec:extension_procedure}
We now introduce the extension procedure for the \FE~blending procedure. Hence, for the following assume that $\OG$ is the domain of the active elements and the input functions $\PsiG$ and $\ThehG$ do not transition into identity at $\partial \QGn$. Otherwise the next steps and results in this section become trivial\footnote{This is the case for the smooth blending.}.

As a first technical step, we introduce notation for all elements neighbouring $\OG$: \vspace*{-0.4cm}
\begin{subequations}
\begin{align}
  \hypertarget{def:ThGp}{
    \ThGp }&:= \{ T \in \Th \, | \, \overline{T} \cap \overline{\OG} \neq \varnothing \}, &
   \hypertarget{def:OGp}{
    \quad \OGp} & := \bigcup \ThGp,
    \\
  \hypertarget{def:ThGQp}{
     \ThGQp }&:= \{ T \times \In{n} \mid T \in \ThGp \}, &
 \hypertarget{def:QGnp}{
  \quad \QGnp} &:= \OGp \times \In{n}. 
\end{align}
\end{subequations}
These are the elements that share at least one vertex with elements in the cut domain $\ThG$. On all the elements $T \in \ThGp \backslash \ThG$, we will apply a blending step which realises a continuous transition from the given values on $\partial \OG$ to the identity on $\partial \OGp$ by a construction involving barycentric coordinates on the reference element. It is described in detail in \cite[Section 3.3]{LR_IMAJNA_2018} as $\mathcal{E}^{\partial \OG}$ and we assume from here on that this linear extension operation is given w.r.t. our set of relevant elements. Central for the applicability of this mapping is the following lemma, which gives bounds on $\mathcal{E}^{\partial \OG} w$ and spatial derivatives thereof in terms of higher derivatives of $w$: \cite[Theorem 3.11]{LR_IMAJNA_2018}
\begin{lemma} \label{ext_bnd_lemma}
 Let \hypertarget{def:verts}{$\Verts(\partial \OG)$ denote the set of vertices in $\partial \OG$} and \hypertarget{def:facets}{$\Facets(\partial \OG)$} the set of all edges ($d=2$) or faces ($d=3$) in $\partial \OG$. The following estimate holds for $m_s =0,1$ and sufficiently regular $w$ s.t. the r.h.s. is well-defined and bounded:
 \begin{subequations}
 \begin{align}
  \!\!\!\! \| D^{m_s} \!\mathcal{E}^{\partial \OG}\!\! w \|_{\infty, \OGp \backslash \OG} \!\!\lesssim \! \sum_{r={m_s}}^{q_s+1} \! h^{r-{m_s}} \| D^r  w \|_{\infty,\Facets(\partial \OG)} \!+\! h^{-{m_s}} \|w\|_{\infty,\Verts(\partial \OG)}. \!\!\!\!\label{ext_bnd_lemma_lo}
  \addtocounter{equation}{1}\tag{\theequation-\texttt{$\mathcal{E}\!$\footnotesize{bnd}}}    
 \end{align}
 Moreover, for ${m_s}=2,\dots,q_s+1$, for $T \in \ThGp \backslash \ThG$ and $\Facets(T)$ the facets of $T$ on $\partial \OG$  \vspace*{-0.25cm}
 \begin{align}
  \| D^{m_s} \mathcal{E}^{\partial \OG} w \|_{\infty, T} \lesssim \sum_{r={m_s}}^{q_s+1} h^{r-{m_s}} \|D^r w \|_{\infty, \Facets(T)}. \label{ext_bnd_lemma_ho}
  \addtocounter{equation}{1}\tag{\theequation-\texttt{$\mathcal{E}\!$\footnotesize{bnd}}}    
 \end{align}
\end{subequations}
\end{lemma}
 The extension can be applied to any continuous, piecewise (sufficiently) smooth function $w$. 
 If the argument $w$ is piecewise (spatial) polynomial, $\mathcal{E}^{\partial \OG} w$ will be as well, cf. \cite[Lemma 3.9]{LR_IMAJNA_2018}.
Applying the $\mathcal{E}^{\partial \OG}$ to the mapping $\PsiG$, we define for $t \in \In{n}$
\begin{equation} \label{eq:intro_Psi_FE_blend}
 (\mathcal{E}_1 \PsiG) (x,t) = \begin{cases}
                                              \PsiG(x,t) &\textnormal{ if } x \in \OG \\
                                              \idx + \mathcal{E}^{\partial \OG}(\PsiG(\cdot,t) - \idx) &\textnormal{ if } x \in  \OGp \backslash \OG \\
                                              \idx &\textnormal{ else}.
                                             \end{cases}
\end{equation}
%
%
As a first step in the proving of
geometry approximation and interpolation results for $\Psii, \Theh$, we state a commutation property for the \emph{spatial} extension $\mathcal{E}^{\partial \OG}$.
\begin{lemma} \label{dt_E_commutation_lemma}
 For $u \in C^{m_t}(\In{n};C(\partial \OG)),~m_t\in\mathbb{N}$ time derivates and spatial extension commute, i.e.  $\partial_t^{m_t} \mathcal{E}^{\partial \OG} u(\cdot,t) = \mathcal{E}^{\partial \OG} \partial_t^{m_t} u(\cdot,t)$. 
\end{lemma}
\begin{proof}
 For the proof of this lemma, we refer to the construction of $\mathcal{E}^{\partial \OG}$ in \cite{LR_IMAJNA_2018}. 
\end{proof}
\begin{remark}[Continuity of space-time mesh deformation across time slabs]
  If the \FE~blending approach is used, the mesh deformation on subsequent time slabs may not be continuous in time as the set of active elements will change in general. This applies both to the mappings $\Psi$ and $\Theta_h$. In relation to $\Theta_h$, the change in the set of active elements will induce discontinuities both before and after the \FE~blending extension is applied. First, the Oswald operator involved in \Cref{eq:def:ThehG} might refer to different domains when seen from above and below in time. Hence, from the local perspective of one active element, and averaging on the boundary degrees of freedom might take place or not depending on whether the according neighbor element is active or not. Second, a similar situation occurs one layer further outside for both $\Theta_h$ and $\Psi$ with the extension mechanism in \Cref{eq:intro_Psi_FE_blend}, as one particular element might be, for instance, in the set of active elements when seen from one side and in the transition set of elements $\ThGp$ when seen from the other side. This will in general cause the resulting deformation function to be discontinuous.
  
  For the discretization of PDEs on the deformed meshes mesh transfer operations will become necessary. One approach for a mesh transfer operation has been proposed and analyzed for an unfitted time stepping method in \cite{LL_ARXIV_2021}.
  
  For the smooth blending approach the space-time mesh deformation will be continuous in time. The difference to the \FE~blending case is that the Oswald operator in \Cref{eq:def:ThehG} will only be applied to identity mappings on the boundary degrees of freedom of active element because of the structure of the blending function defining the set of active elements. Moreover, the trivial extension mechanism of \Cref{E2_intro_eq} will not induce discontinuities.
\end{remark}

\subsection{Interpolation results: Boundedness of $\protect\Psii$}
We bound the difference between $\Psii$ and $\idx$ in different norms:
\begin{theorem} \label{lemma_diff_Psi_id_small} Let $q_t^\ast = q_t + 1$ for the smooth blending and $q_t^\ast = q_t$ for the \FE~blending. Then, it holds
  \begin{subequations}
 \begin{align}
  \| \Psii - \idx \|_{\infty, \tQn } &\lesssim h^2 + \Delta t^{q_t+1} \label{Psi_to_id_lo_boundedA},
  \addtocounter{equation}{1}\tag{\theequation-\texttt{$\Psi$\footnotesize{bnd}}}    
  \\
   \| \partial_t \Psii \|_{\infty, \tQn} &\lesssim h^2 + \Delta t^{q_t}. \label{Psi_to_id_lo_boundedB}
  \addtocounter{equation}{1}\tag{\theequation-\texttt{$\Psi$\footnotesize{bnd}}}    
  \\
  \| \nabla \Psii - \Idx \|_{\infty, \ThQ} &\lesssim h + 
      \Delta t^{q_t^\ast}, \label{Psi_to_id_lo_boundedC}
  \addtocounter{equation}{1}\tag{\theequation-\texttt{$\Psi$\footnotesize{bnd}}}    
  \\
  \| D^{m_s} \partial_t^{m_t} \Psii \|_{\infty,\ThQ} &\lesssim 1 \quad \textnormal{for } m_s  \leq q_s +1, m_t \leq q_t^\ast, m_t + q_t \leq \lphi.  
\label{Psi_ho_deriv_boundedA}
  \addtocounter{equation}{1}\tag{\theequation-\texttt{$\Psi$\footnotesize{bnd}}}    
  \end{align}
  \end{subequations}
\end{theorem}
\begin{remark}
\eqref{Psi_to_id_lo_boundedA}--\eqref{Psi_to_id_lo_boundedC} can be summarized as in 
\eqref{Psi_Gamma_lo_deriv_ho_boundA} of \cref{cor:PsiG} only for the smooth blending case. 
Resulting from the $1/h$-factor in \cref{ext_bnd_lemma_lo} for $m_s=1$ for the \FE~blending, the bound for the first spatial derivative is weaker by one order in $\Delta t$ than for the smooth blending for which no extension step is required.
Similarly, the bound \eqref{Psi_ho_deriv_boundedA} requires a stronger limitation on the order of the time derivative for the \FE~blending than for the smooth blending.
\end{remark}
\begin{proof}

 We start with the smooth function blending. By construction of the extension operator, only the space-time mesh $\ThGQ$ needs to be discussed, where $\Psii = \PsiG$. Hence, all results follow from \cref{cor:PsiG}. Next, we note that the same argument applies to $\ThGQ$ for the \FE~blending.
For $\ThGQS$ with the \FE~blending, we apply \Cref{dt_E_commutation_lemma} and \cref{ext_bnd_lemma} (at every $t\in\In{n}$) to gain control over $\| \partial_t^{m_t} D^{m_s} (\Psii - \idx) \|_{\infty, \ThGQS} =  \| D^{m_s} \mathcal{E}^{\partial \OG} \partial_t^{m_t}(\PsiG - \idx) \|_{\infty, \ThGQS}$. The structure of \cref{ext_bnd_lemma} suggests a distinction of cases $m_s = 2, \dots,q_s+1$ (case 1), 
$m_s = 0$ (case 2) and  $m_s=1$ (case 3).

\underline{Case 1, proof of \eqref{Psi_ho_deriv_boundedA} for $m_s = 2, \dots,q_s+1$:}\\
We start with applying \cref{ext_bnd_lemma_ho} in order to yield for $m_s = 2, \dots,q_s+1$
\begin{align}
 & \| D^{m_s} \partial_t^{m_t} \Psii \|_{\infty, \ThGQS} =  \| D^{m_s} \mathcal{E}^{\partial \OG} \partial_t^{m_t}\PsiG \|_{\infty, \ThGQS} \nonumber \\
 \overset{\cref{ext_bnd_lemma_ho}}{\lesssim} & \max_{F \in \Facets(T)} \sum_{r=m_s}^{q_s+1} \underbrace{h^{r-m_s}}_{\lesssim 1} \| D^r \partial_t^{m_t}\PsiG \|_{\infty,F \times \In{n}} \lesssim 1 \quad \mathrm{by} \quad \cref{Psi_Gamma_dt_ho_derivs_bounded}. 
\end{align}
In the last step we exploited \cref{ass:reg_feblend} so that to bound
$\| D^r \partial_t^{m_t}\PsiG \|_{\infty,F \times \In{n}}$ with \cref{Psi_Gamma_dt_ho_derivs_bounded} we have for the indices $r$ and $m_t$ the bounds $r \leq q_s+1$, $m_t \leq q_t+1$ and $r+m_t \leq q_s+q_t+2 \leq \lphi$.

\underline{Case 2, proof of \eqref{Psi_to_id_lo_boundedA}, \eqref{Psi_to_id_lo_boundedB} and \eqref{Psi_ho_deriv_boundedA} for $m_s=0$:}\\[-3.5ex]
 \begin{align}
 & \| \partial_t^{m_t} (\Psii - \idx) \|_{\infty, \ThGQS} = \| D^0 \mathcal{E}^{\partial \OG} \partial_t^{m_t}(\PsiG - \idx) \|_{\infty, \ThGQS}
 \nonumber
 \\[-1ex]
 \!\!\! \overset{\cref{ext_bnd_lemma_lo}}{\lesssim} \!\!\!\!\!\!\! & \max_{F \in \Facets(\partial \OG)} \sum_{r=0}^{q_s+1} h^r \| D^r \partial_t^{m_t} (\PsiG - \idx)\|_{\infty,F \times \In{n}} + \overbrace{ \|\partial_t^{m_t} (\PsiG - \idx)\|_{\infty,\Verts(\partial \OG) \times \In{n}}}^{\lesssim h^2 + \Delta t^{q_t+1-m_t} \textnormal{ by } \cref{Psi_Gamma_first_dt_boundB}} \nonumber
\end{align}
Different values of $m_t$, yield the different results:
For $m_t = 0,1$, we bound all the summands $r=2,\dots, q_s+1$ by $h^2$, again by \cref{Psi_Gamma_dt_ho_derivs_bounded}. The summand for $r=0$ can be bounded with $\lesssim h^2 + \Delta t^{q_t+1-m_t}$ by \cref{Psi_Gamma_lo_deriv_ho_boundA}. The summand $r=1$ similarly with $h (h + \Delta t^{q_t+1-m_t}) \lesssim h^2 + \Delta t^{q_t+1-m_t}$. This concludes the proofs of \cref{Psi_to_id_lo_boundedA} and \cref{Psi_to_id_lo_boundedB}.
For $m_t=2,\dots,q_t+1$, we combine $h^r \lesssim 1$ with \cref{Psi_Gamma_dt_ho_derivs_bounded} (using \cref{ass:reg_feblend}) to yield  \cref{Psi_ho_deriv_boundedA}. 

\underline{Case 3, proof of \eqref{Psi_to_id_lo_boundedC} and \eqref{Psi_ho_deriv_boundedA} for $m_s=1$:} \\ 
We start by observing
\begin{align}
 & 
 \| D^1 \mathcal{E}^{\partial \OG} \partial_t^{m_t}(\PsiG - \idx) \|_{\infty, \ThGQS} \label{eq_613} \\[-2ex]
 \!\!\overset{\cref{ext_bnd_lemma_lo}}{\lesssim} \!\!\!\!\!\!\!\!\!\! \max_{F \in \Facets(\partial \OG)} & \sum_{r=1}^{q_s+1} h^{r-1} \| D^r \partial_t^{m_t} (\PsiG - \idx)\|_{\infty,F \times \In{n}} \!+\! \frac{1}{h} \overbrace{ \|\partial_t^{m_t} (\PsiG - \idx)\|_{\infty,\Verts(\partial \OG)\times\In{n}}}^{\lesssim h^2 + \Delta t^{q_t+1-m_t} \textnormal{ by } \cref{Psi_Gamma_first_dt_boundB}} \nonumber
\end{align}
We first note that -- for the purposes of proving \cref{Psi_ho_deriv_boundedA} -- by \cref{ass:dtlesssimh_feblend}, we obtain in regards to the last summand $\frac{1}{h}(h^2 + \Delta t^{q_t + 1 - m_t}) \lesssim h + \Delta t^{q_t - m_t}$. Together with $m_t \leq q_t$ required for the \FE~blending, the boundedness of this summand follows. In relation to the other summands, we confirm boundedness as before from \cref{Psi_Gamma_dt_ho_derivs_bounded}. 
\end{proof}

We introduce the following Sobolev spaces of space-time functions with weak derivatives of possibly non-uniform degree in space and time:
Let $Q$ be a space-time domain, then we define for $r,s \in \mathbb{N}$
\begin{equation} \label{eq:Hrs}
 \hypertarget{def:Hrs}{\Hrs{r}{s}}(Q) := \{ u \, | \, \partial_t^q D^\alpha u \in L^2(Q), p,q \in \mathbb{N}, p = |\alpha| \leq r, q \leq s \}.
\end{equation}
We will only need the spaces or corresponding norms for the cases $(r,s) \in \mathbb{N} \times \{0,1\} \cup \{0,1\} \times \mathbb{N}$ in the following.
\begin{remark}
The concept of mixed regularities in space and time can further be generalized and refined within the concept of $t$-anisotropic Sobolev spaces. In that context, Sobolev spaces $H^{r,s}$ are typically defined through interpolation between spaces with bounded derivatives for $(r,0)$, i.e. only higher-order spatial derivatives and $(0,s)$, i.e. only higher-order temporal derivatives.
In our simpler definition \eqref{eq:Hrs} $(r,s)$-regularity implies that also the weak $(r,s)$-derivative, i.e. the combined $r$-th order spatial and $s$-th order temporal derivate, exists. We indicate this stronger notion by the rectangle in the index%
\footnote{as opposed to a triangle that would correspond to the usual $t$-anisotropic Sobolev spaces 
$H^{r,s}(Q) = H_{\scalebox{0.45}[0.45]{$\vert$}\scalebox{0.5}[0.5]{$\!\underline{~~~\!}$}\scalebox{0.85}{\!}\!\!\scalebox{1.2}[0.4]{$\backslash$}}^{r,s}(Q) := \{ u \, | \, \partial_t^p D^\alpha u \in L^2(Q),p,q \in \mathbb{N}, q = |\alpha|, \frac{q}{r} + \frac{p}{s} \leq 1 \}$.
}
and note that w.r.t. the standard (isotropic) Sobolev space $H^m(Q)$ there holds $H^{r+s}(Q) \subseteq \Hrs{r}{s}(Q) \subseteq H^{\min(r,s)}(Q)$.
\end{remark}

\begin{corollary} \label{boundedness_result_final}
  Let $q_t^\ast = q_t + 1$ for the smooth and $q_t^\ast = q_t$ for the \FE~blending. 
  There holds
\begin{equation}
 \| \Psii\|_{\Hrs{q_s+1}{1}(\ThQ)} + \| \Psii\|_{\Hrs{1}{q_t^\ast}(\ThQ)} \lesssim 1.
\end{equation}
\end{corollary}

\subsection{Space-Time mapping functions}
In the following, 
we introduce canonical variants of the mappings with a space-time range, 
$\Psist \colon \tQn \to \tQn$ and $\Thest \colon \tQn \to \tQn$ are the space-time variants of $\Psii$ and $\Theh$ with
\begin{equation}
  \hypertarget{def:Psist}{\Psist (x,t) \coloneqq ( \Psii(x,t), t) }, \quad 
  \hypertarget{def:Thest}{\Thest (x,t) \coloneqq (\Theh(x,t), t). }
\end{equation}
In these terms, we also define the discrete space-time domain of integration:
\begin{equation}
 \hypertarget{def:Qhn}{\Qhn \coloneqq \Thest ( \Qlin )}, \quad 
 \hypertarget{def:Oh}{\Ohi{t} \coloneqq \Theh (\Omlin{t}, t).} 
\end{equation}
%

\subsection{Geometry approximation results}
Taking together results from the previous parts, we obtain a higher-order bound on the difference between $\Psi$ and $\Theta_h$:
\begin{theorem} \label{lemma_diff_Theta_h_Psi_small}
  Let $q_t^\ast = q_t + 1$ for the smooth and $q_t^\ast = q_t$ for the \FE~blending. Then, for $m_s = 0,\dots,q_s+1$, $m_t=0, \dots, q_t+1$ it holds
 \begin{subequations} \label{Theta_h_Psi_diff_ho_bounds}
 \begin{align}
  \Delta t^{m_t} h^{m_s} \| D^{m_s} \partial_t^{m_t} (\Thest - \Psist) \|_{\infty, \ThQ} 
  &\lesssim h^{q_s +1} + \Delta t^{q_t + 1},  \label{Theta_h_Psi_diff_ho_boundA}
  \addtocounter{equation}{1}\tag{\theequation-\texttt{$\Theta_h$\footnotesize{acc}}}    \\
  \|  \nabla (\Thest - \Psist)\|_{\infty, \ThQ} &\lesssim h^{q_s\hphantom{+ 1}} + \Delta t^{q_t^\ast}. \label{Theta_h_Psi_diff_ho_boundB}
  \addtocounter{equation}{1}\tag{\theequation-\texttt{$\Theta_h$\footnotesize{acc}}}    \\
  \intertext{Under assumption \cref{diff_phiH_phihn} there further holds}
   \|  \partial_t (\Theh - \Psii)\|_{\infty, \ThQ} & \lesssim h^{q_s+1} + \Delta t^{q_t}. \label{Theta_h_Psi_diff_ho_boundC-new}
  \addtocounter{equation}{1}\tag{\theequation-\texttt{$\Theta_h$\footnotesize{acc}}}   
\end{align}
\end{subequations}
\end{theorem}
\begin{proof}
First, we observe that for the smooth blending the claims \eqref{Theta_h_Psi_diff_ho_boundA} and \eqref{Theta_h_Psi_diff_ho_boundB} follow directly from \cref{Theta_h_Psi_Gamm_diff_bound_Dr_dt} 
and \cref{Theta_h_Psi_Gamm_diff_bound_first_grad}. 
In the case of the \FE~blending the same results are obtained by making use of \cref{ext_bnd_lemma} and \cref{ass:dtlesssimh_feblend} (for \eqref{Theta_h_Psi_diff_ho_boundB}). For completeness, we give the proof in \cref{app:Theta_h_Psi_diff_ho_bounds:FEblend}.\\
\eqref{Theta_h_Psi_diff_ho_boundC-new} follows directly from 
\cref{lem:Theta_h_Psi_Gamm_diff_bound_first_dt} and -- in the case of the \FE~blending -- the commuting property of the 
\emph{spatial} extension $\mathcal{E}^{\partial \OG}$, cf. \cref{dt_E_commutation_lemma}.
\end{proof}
Taking the results of this lemma together with \cref{lemma_diff_Psi_id_small}, we conclude that for $h$ and $\Delta t$ sufficiently small, the mapping $\Phihst$ introduced as follows is a bijection:
\begin{equation}
 \hypertarget{def:Psihst}{\Phihst := \Psist \circ (\Thest)^{-1}}
\end{equation}
By this definition, $\Phihst$ translates between the higher-order geometry approximation and the exact domain, $\Phihst( \Qhn )=\Qn{n}$. Following the convention so far, let $\Phi_h$ be the spatial components of $\Phihst$, i.e. let $\Phi_h$ be the function such that $\Phihst(x,t) = ( \Phi_h(x,t),t )$.
The previous results imply bounds on $\Phihst$:
\begin{corollary} \label{lemma_Phi_h_Id_diff_small}
  Let $q_t^\ast = q_t + 1$ for the smooth and $q_t^\ast = q_t$ for the \FE~blending. Then, it holds for
$m_s = 0,\dots,q_s+1$, $m_t=0, \dots, q_t+1$ 
\begin{subequations}
\begin{align}
  \Delta t^{m_t} h^{m_s} 
  \| D^{m_s} \partial_t^{m_t} (\Phihst - \idxt) \|_{\infty, \ThQ}& \lesssim h^{q_s +1} + \Delta t^{q_t + 1}. 
  \addtocounter{equation}{1}\tag{\theequation-\texttt{$\Phi_h^{\!\text{st}}$\footnotesize{bnd}}}    
  \\
  \|  \nabla \Phihst - \Idxt\|_{\infty, \tQn} &\lesssim h^{q_s} + \Delta t^{q_t^\ast},
  \addtocounter{equation}{1}\tag{\theequation-\texttt{$\Phi_h^{\!\text{st}}$\footnotesize{bnd}}}    
   \intertext{ If further \cref{diff_phiH_phihn} holds, there is
  }
  \| \partial_t \Phi_h \|_{\infty, \tQn} &\lesssim h^{q_s +1} + \Delta t^{q_t}.
  \addtocounter{equation}{1}\tag{\theequation-\texttt{$\Phi_h^{\!\text{st}}$\footnotesize{bnd}}}    
 \end{align}
\end{subequations} 
\end{corollary}
\begin{proof}
 We observe that $\Phihst - \idxt = (\Psist - \Thest )\circ (\Thest)^{-1}$. Then, with \cref{lemma_diff_Psi_id_small} and \cref{lemma_diff_Theta_h_Psi_small}, we obtain boundedness of $\Thest$ as well as of $(\Thest)^{-1}$ and its derivatives, and the result follows from 
 an application of the Faà di Bruno formula and
 \cref{Theta_h_Psi_diff_ho_boundA,Theta_h_Psi_diff_ho_boundB,Theta_h_Psi_diff_ho_boundC-new}.
\end{proof}

\subsection{Properties of \texorpdfstring{$\Theta_h$}{discrete mapping}} \label{ss_prop_of_Thetah}
Note that due to a triangle inequality, taking into account 
\cref{Theta_h_Psi_diff_ho_boundA,Theta_h_Psi_diff_ho_boundB,Theta_h_Psi_diff_ho_boundC-new} and \cref{Psi_to_id_lo_boundedC} we can conclude the following bound on the difference between $\Thest$ and $\idxt$:
\begin{corollary} \label{cor_Theta_h_Id_diff}
Let $q_t^\ast = q_t + 1$ for the smooth and $q_t^\ast = q_t$ for the \FE~blending. Then, there holds
 \begin{align}
  \| \Thest - \idxt \|_{\infty, \tQn}\lesssim h^2 + \Delta t^{q_t + 1}, ~~ \| \nabla \Thest - \Idxt\|_{\infty, \tQn} &\lesssim h + \Delta t^{q_t^\ast}.
 \end{align}
\end{corollary}
This implies that the norm of a function defined on $\Omlin{t}$ or $\Qlin$ is equivalent to the function lifted to $\Ohi{t}$ or $\Qhn$, respectively, in accordance with $\Theh$:
\begin{lemma}
 Let $w \in L^2(\Qlin)$ and $v_t \in L^2(\Omlin{t})$ for $t \in \In{n}$. Then it holds
 \begin{equation}
  \| w \|_{\Qlin} \simeq \| w \circ (\Thehst)^{-1} \|_{\Qhn}, \quad 
  \| v_t \|_{\Omlin{t}} \simeq \| v_t \circ \Theh(\cdot,t)^{-1} \|_{\Ohi{t}} 
 \end{equation}
\end{lemma}
\begin{proof}
 From \cref{cor_Theta_h_Id_diff}, we have $\det(\nabla \Thehst) = 1 + \mathcal{O}(h + \Delta t^{q_t})$.
\end{proof}

\subsection{An estimate for the discrete space-time normal} \label{ss_discrete_normal}
We define the outward pointing \emph{space-time normals} to the space-time domains $\Qn{n}$ and $\Qhn$ on the \emph{spatial} boundary as $n $ and $n_h$ respectively. 
We can give bounds between the difference of these two normals (after lifting them on the same domain):
\begin{lemma}
With \cref{diff_phiH_phihn} there holds
 \begin{equation}
  \| n \circ \Phihst - n_h \|_{\infty, \partials \Qhn} \lesssim h^{q_s}  + \Delta t^{q_t}. 
 \end{equation}
\end{lemma}
\begin{proof}
 We start with the same argument as given above of \cite[Eq. (A.20)]{LR_IMAJNA_2018}, which builds on the observation that the tangent space is transported by the mapping $\Phihst$: Fix a point $(x,t) \in \partials Q^h$ and let $t_1,\dots, t_d$ be an orthonormal basis of $n_h^\perp$. The tangent space to $\partials Q = \Phihst (\partials Q^h)$ is given by $\operatorname{span}\{ \Dxt \Phihst(x,t) t_j \, | \, 1 \leq j \leq d\} = n^\perp$. Hence, we have $n ( \Phihst(x,t)) \in \operatorname{span}\{(\Dxt \Phihst(x,t))^{-T} n_h \}$, so that we can write
  $
  n \circ \Phihst = \nicefrac{(\Dxt \Phihst)^{-T} n_h }{\| (\Dxt \Phihst)^{-T} n_h\|}
  $.
 With the help of \cref{lemma_Phi_h_Id_diff_small}, we have $(\Dxt \Phihst)^{-T} = \Idxt + \mathcal{O}(h^{q_s} + \Delta t^{q_t})$ from which we obtain the claim after a triangle inequality.
 
%

\end{proof}
\begin{remark}
  Without \cref{diff_phiH_phihn} an additional term $\nicefrac{h^{q_s+1}}{\Delta t}$ would enter from the estimates in \cref{lemma_Phi_h_Id_diff_small} yielding a worse estimate of the form 
  $
    \| n \circ \Phihst - n_h \|_{\infty, \partials \Qhn} \lesssim h^{q_s}  + \Delta t^{q_t} + \nicefrac{h^{q_s+1}}{\Delta t}
$.
\end{remark}

\section{Interpolation on \texorpdfstring{$\Psi$-}{}deformed space-time meshes} \label{sec:interpolation}

In the analysis of numerical discretizations in the isoparametric setting, we have a mismatch in the domains of definition, as the solution $u$ is defined on the physical domain, whereas discrete functions are fundamentally defined on $\Qlin$/ $\Qhn$. As the function $\Psii$ maps from the piecewise linear reference geometry to the exact geometry, it is instructive to search for a discrete variant of the function $\hat u = u^e \circ \Psist$, for an extension $u^e$ of $u$. To obtain this approximation, we will apply a result from \cite{preuss18} and discuss the relevant implications in relation to the geometry approximation.

Note that the domain of definition of $\hat u$ depends on the physical problem of interest. In a surface problem, it would be reasonable to use the set $Q^\Gamma_{h1} := \bigcup_{T \in \ThbOne}  T \times \In{n}$ as the smallest set of complete elements containing all relevant information. For a bulk problem, the following sets serve the purpose of the domain of definition
\begin{align}
 \hypertarget{def:EQlin}{\EQlin} := \bigcup \{ T \times I_n \, | \, (T \times I_n) \cap \Qlin \!\! \neq \! \varnothing \}, ~~
 \hypertarget{def:EOmlin}{\EOmlin} := \bigcup \{ T \, | \, (T \times I_n) \cap \Qlin \! \neq \! \varnothing \}.
\end{align}
From now on, we adopt the point of view of the bulk problem and interpolate on $\EQlin$, but the respective results on the smaller set $Q^\Gamma_{h1}$ follow.

We now show interpolation results on the space-time domain $\EQlin$.
\begin{lemma} \label{interpollemma1}
  There exists an interpolation operator $\Pi^n_W\colon \! L^2\!(\EQlin) \to \Vh{k_s,k_t}\!(\EQlin)$, s.t. for all functions $\hat u \in L^2(\EQlin)$ with sufficient regularity, s.t. the r.h.s. expressions in the following estimates are well-defined and bounded, there holds for the interpolation error $\hat e_u = \hat u - \Pi_W^n \hat u $ and $\ell_s \in \{ 1,\dots,k_s+1\}$, $\ell_t \in \{ 1,\dots,k_t+1\}$:
  \begin{subequations}
   \begin{align}
   \| \hat e_u \|_{\EQlin} 
   &
   \lesssim ~ \Delta t^{\ell_t\hphantom{-1}} \| \hat u \|_{\Hrs{0}{\ell_t}(\EQlin)} \!+\! \hphantom{\Delta t^{-\frac12}} h^{\ell_s\hphantom{-1}} \| \hat u\|_{\Hrs{\ell_s}{0}(\EQlin)}, 
   \\
   \!\! \| \partial_t \hat e_u \|_{\EQlin} 
   &
   \lesssim ~ \Delta t^{\ell_t-1} \| \hat u \|_{\Hrs{0}{\ell_t}(\EQlin)} \!+\! \hphantom{\Delta t^{-\frac12}} h^{\ell_s\hphantom{-1}} \| \hat u\|_{\Hrs{\ell_s}{1}(\EQlin)}\hphantom{,}~\text{if } \ell_t \!\ge\! 2, 
   \\
   \| \nabla \hat e_u \|_{\EQlin} 
   &
   \lesssim ~ \Delta t^{\ell_t\hphantom{-1}} \|\hat u \|_{\Hrs{1}{\ell_t}(\EQlin)} \!+\! \hphantom{\Delta t^{-\frac12}}h^{\ell_s-1} \| \hat u\|_{\Hrs{\ell_s}{0}(\EQlin)}\hphantom{,}~\text{if } \ell_s \!>\! 2, 
   \\
   \!\!\!\!\!\!\!\!\!\!  \| \hat e_u(\cdot,t_n) \|_{\EOmlin} 
   &
   \lesssim ~ \Delta t^{\ell_t-\frac12\!} \| \hat u \|_{\Hrs{0}{\ell_t}(\EQlin)} 
   \!+\! \Delta t^{-\frac12} h^{\ell_s\hphantom{-1}} \| \hat u\|_{\Hrs{\ell_s}{0}(\EQlin)}\hphantom{,}~\text{if } \ell_t \!\ge\! 2.
 \end{align}
\end{subequations}
\end{lemma}
 \begin{proof}
  We apply \cite[Theorem 3.18 and Lemma 3.21]{preuss18}. Note that we choose the polygonal spatial domain $\EOmlin$, which corresponds to $\Omega$ internally in \cite[Sec. 3.3]{preuss18}.
 \end{proof}
This result asks for upper bounds for $\| \hat u \|_{\Hrs{0}{\ell_t}(\EQlin)}$, $\| \hat u\|_{\Hrs{\ell_s}{0}(\EQlin)}$, as well as for $\| \nabla \hat u \|_{\Hrs{0}{\ell_t}(\EQlin)}$ and $\| \partial_t \hat u\|_{\Hrs{\ell_s}{0}(\EQlin)}$. Those will be influenced both by the regularity of $\hat u$. For $\hat u = u^e \circ \Psist$ this regularity crucially also depends on $\Psist$ where we need the bounds derived before.
\begin{lemma} \label{interpollemma2}
Bounds on higher-order spatial or temporal derivatives of $\Psist$ imply bounds on higher-order derivatives of $u^e \circ \Psist$ by norms of $u^e$ as follows for $\ell_t, \ell_s \in \mathbb{N}$:  
\vspace*{-0.5cm}
\begin{subequations}
\begin{align} 
  \| \Psist\|_{\Hrs{0}{\ell_t}(\EQlin)} \lesssim 1 
  &~\quad\Longrightarrow~&
  \| u^e \circ \Psist\|_{\Hrs{0}{\ell_t}(\EQlin)} &\lesssim \| u^e \|_{H^{\ell_t}(\Psist(\EQlin))},  \\ 
  \| \Psist\|_{\Hrs{\ell_s}{0}(\EQlin)} \lesssim 1
  &~\quad\Longrightarrow~&
  \| u^e \circ \Psist\|_{\Hrs{\ell_s}{0}(\EQlin)} &\lesssim \| u^e \|_{H^{\ell_s}(\Psist(\EQlin))}, \\
  \| \Psist\|_{\Hrs{\ell_s}{1}(\EQlin)} \lesssim 1
  &~\quad\Longrightarrow~&
  \| \partial_t (u^e \circ \Psist)\|_{\Hrs{\ell_s}{0}(\EQlin)} &\lesssim \| u^e \|_{H^{\ell_s+ 1}(\Psist(\EQlin))}, \\
  \| \Psist\|_{\Hrs{1}{\ell_t}(\EQlin)} \lesssim 1
  &~\quad\Longrightarrow~&
  \| \nabla (u^e \circ \Psist)\|_{\Hrs{0}{\ell_t}(\EQlin)} &\lesssim \| u^e \|_{H^{\ell_t+ 1}(\Psist(\EQlin))}. 
\end{align} 
\end{subequations}
\end{lemma}
\begin{proof}
  The proof relies on a Faà di Bruno formula in combination with the l.h.s. bounds. The proof is given in \cref{app:interpollemma2} for completeness.
\end{proof}  
The previous two results do not depend on the results of the previous sections but display the need and purpose of the higher-order bounds on $\Psist$. Combining the bounds for $\Psist$ with the interpolation results, allows to derive interpolation results for isoparametrically mapped space-time \FE~spaces for space-time discretizations.

\section{Numerical examples} \label{ref:numexp}
We conclude the paper with a section on numerical experiments. Note that in \cite{HLP2022}, we already studied numerically convection-diffusion discretisations involving our geometry approximation with \FE~blending. Hence, we complement these results with some further numerical investigations.

All experiments are performed with \texttt{ngsxfem} \cite{xfem_joss}, an unfitted \FE~extension of \texttt{ngsolve}.
Reproduction data are available at \url{https://gitlab.gwdg.de/fabian.heimann/repro-ho-unf-space-time-fem-geom}.
The linear systems are solved with the direct solvers \texttt{umfpack} and \texttt{pardiso} of IntelMKL.

To fix a specific setting, we choose a polynomial order parameter $k=1,\dots,6$ and set all geometry approximation and discrete \FE~space orders to this, $k=k_s = k_t = q_s = q_t$. Similarly, we focus in the first subsections on simultaneous space-time refinements, i.e. we introduce parameters $i= i_s = i_t$, where the time step $\Delta t$ and the mesh size $h$ satisfy
$\Delta t = {T} \cdot {2^{-i_t-2}}$, $h = 0.9 \cdot {2^{-i_s}}.$
For the evolving geometry, we use the first example of \cite{HLP2022}, where a circular geometry in 2D deforms into a kite. It is described in terms of the following levelset function $\phi$
\begin{equation*}
 \rho(t, y) = (1 - y^2) \cdot t, \quad r = \sqrt{(x- \rho)^2 + y^2}, \quad \phi = r - r_0, \quad r_0 = 1,
\end{equation*}
where $\tilde \Omega = [-1.6, 2.1] \times [-1.6,1.6]$, $T=0.5$. We use unstructured simplicial meshes of the maximal mesh size $h$ given above.

For the smooth blending function, we use the following construction: Fixing a smoothness order parameter $s \in \mathbb{N}$ 
we define $b$ for a blending width parameter $w_b$:
\begin{equation}
  b = \begin{cases}
    0 &\textnormal{if }  |\phi| \leq w_b, \\
    1  &\textnormal{if }  2 w_b < |\phi|, \\
    \pi_s\left(\frac{|\phi|-w_b}{w_b} \right)  &\textnormal{else,} 
   \end{cases}             
 \text{ with }\pi_s (x) = \begin{cases}
              2^{s-1} x^s & \textnormal{if} \quad x \leq \frac{1}{2} \\
              1 - 2^{s-1} (1-x)^s  &\textnormal{else}.
             \end{cases} \label{eq:def_b_example}
\end{equation}
We show examples of this blending function with the resulting deformation magnitudes in \cref{fig:blending_def_examples}, as well as a plot of $\pi_s$.
\begin{figure}
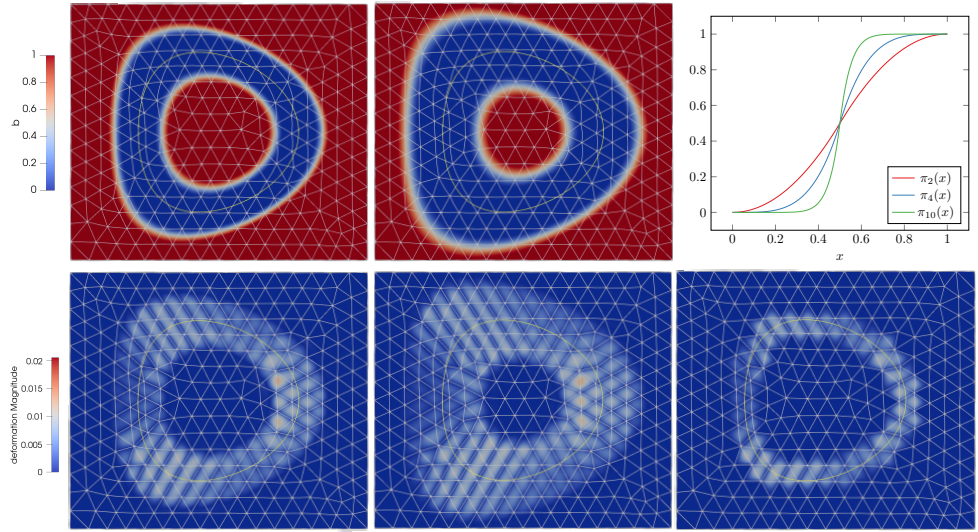

  \includegraphics[width=0.05\textwidth]{mesh_examples/medium_blend.0039_a_legend}
  \includegraphics[width=0.31\textwidth, angle=0.5]{mesh_examples/medium_blend.0039_a_no_legend} \hspace{-0.25cm}
  \includegraphics[width=0.31\textwidth, angle=0.5]{mesh_examples/thick_blend.0039_a_no_legend} 
  \begin{tikzpicture}[scale=0.5]
\begin{axis}[
xlabel={$x$}, legend entries={$\pi_2(x)$, $\pi_4(x)$, $\pi_{10}(x)$}, legend pos=south east,
]
\addplot [Set1-A, domain=0:1, samples=201] {(x<0.5)*(2)*x^2 + (x>=0.5)*(1 - (2)*(1-x)^2) };
\addplot [Set1-B, domain=0:1, samples=201] {(x<0.5)*(2^3)*x^4 + (x>=0.5)*(1 - (2^3)*(1-x)^4) };
\addplot [Set1-C, domain=0:1, samples=201] {(x<0.5)*(2^9)*x^10 + (x>=0.5)*(1 - (2^9)*(1-x)^10) };
\end{axis}
  \end{tikzpicture} \\
  \includegraphics[width=0.0525\textwidth]{mesh_examples/fe_blend.0039_b_legend}
  \includegraphics[width=0.31\textwidth, angle=0.5]{mesh_examples/medium_blend.0039_b_no_legend} \hspace{-0.25cm}
  \includegraphics[width=0.31\textwidth, angle=0.5]{mesh_examples/thick_blend.0039_b_no_legend} \hspace{-0.25cm}
  \includegraphics[width=0.31\textwidth, angle=0.5]{mesh_examples/fe_blend.0039_b_no_legend}
 \vspace*{-0.25cm}
 \caption{Upper row left and middle: Two examples of blending functions $\protect\b$ at a fixed time for increasing values of $w_b$. Upper row right: Examples of the function $\pi_s$ involved in the definition of $\protect\b$ in terms of $\phi$, \cref{eq:def_b_example}. Bottom row left and middle: Absolute value of the deformation stemming from the smooth blending functions respectively above. Bottom row right: Absolute value of the deformation from the \protect\FE~blending for the same discretisation parameters and a small time step.} \vspace{-0.2cm}
 \label{fig:blending_def_examples} 
\end{figure}

We note that in view of the analysis, the blending width needs to be chosen sufficiently large ($w_b \gtrsim h$) so that all cut elements of a time slice have vanishing $b$, cf. first point of \cref{ass:blending}.
Numerically, we also include studies where cut elements are only partially contained in the region with vanishing $b$. 
\subsection{Geometrical Accuracy}
In the first numerical experiment, we want to study the geometrical accuracy of the suggested method. We measure numerically the distance between the numerical domain and the exact geometry on $\partial \Ohi{t}$. 
To this end, we take the maximal value of the exact levelset function on the discrete higher-order geometry $\partial \Ohi{t}$. As the chosen levelset function is a signed distance function up to a constant for an upper bound and a constant for a lower bound (stemming from the deformation of the coordinate system) 
this yields a proper distance measure that we denote by $\operatorname{dist}^\ast$ which is equivalent to the Euclidean distance.
In \cref{fig:geom_approx_conv} we observe for the chosen (equal) orders and refinements in space and time and with the smooth blending that
$\max_{t \in I_n} \mathrm{dist}^\ast(\partial \Ohi{t}, \partial \Omega(t)) \lesssim h^{k + 1} \simeq \Delta t^{k +1}$ which is in agreement with the theoretical predictions from \cref{lemma_Phi_h_Id_diff_small}.
We note that this study for the \FE~blending was already included in \cite{HLP2022} with similar results. We conclude that for the geometry approximation at the interface, both blending options work properly.

\begin{figure}
\centering
\vspace*{-0.25cm}
\begin{tikzpicture}[scale=0.825]
    \begin{semilogyaxis}[ xlabel=$i$, 
      title={$\max_{t \in [0,T]} \mathrm{dist}^\ast(\partial \Ohi{t}, \partial \Omega(t))$}, 
    title style={at={(0.5,0.8)},draw=black},
    legend entries ={ $k=1$, $k=2$, $k=3$, $k=4$, $k=5$, $k=6$}, legend style={anchor=north,legend columns=1, draw=none}, legend pos = south west, x label style={at={(axis description cs:0.97,0.06)},anchor=east},
     x tick label style={yshift=0cm,xshift=0.0em},
      y label style={at={(axis description cs:0.25,0.85)},anchor=east},
      ymax = 3e0,
      ymin = 1e-12,
      width=1.1\textwidth,
      height=0.45\textwidth,
     ]
     \addplot table [x index = 0, y index =5] {num_exp/out/conv_kite_DG_ks1_kt1_both_nref8_gamma0.05_sm2_interpolspecial_blend_p4_w0.1.dat};
     \addplot table [x index = 0, y index =5] {num_exp/out/conv_kite_DG_ks2_kt2_both_nref8_gamma0.05_sm2_interpolspecial_blend_p4_w0.1.dat};
     \addplot table [x index = 0, y index =5] {num_exp/out/conv_kite_DG_ks3_kt3_both_nref7_gamma0.05_sm2_interpolspecial_blend_p4_w0.1.dat};
     \addplot table [x index = 0, y index =5] {num_exp/out/conv_kite_DG_ks4_kt4_both_nref7_gamma0.05_sm2_interpolspecial_blend_p4_w0.1.dat};
     \addplot table [x index = 0, y index =5] {num_exp/out/conv_kite_DG_ks5_kt5_both_nref6_gamma0.05_sm2_interpolspecial_blend_p4_w0.1.dat};
     \addplot table [x index = 0, y index =5] {num_exp/out/conv_kite_DG_ks6_kt6_both_nref6_gamma0.05_sm2_interpolspecial_blend_p4_w0.1.dat};
     \addplot[gray, dashed, domain=0:7] {(1/2^(x+1)))^2};
     \addplot[gray, dashed, domain=0:7] {(1/2^(x+1)))^3};
     \addplot[gray, dashed, domain=0:6] {(1/2^(x+1)))^4};
     \addplot[gray, dashed, domain=0:6] {(1/2^(x+1)))^5};
     \addplot[gray, dashed, domain=0:5] {(1/2^(x+0.5)))^6};
     \addplot[gray, dashed, domain=0:5] {(1/2^(x+0.5)))^7};
    \end{semilogyaxis}
                  \node[scale=0.75] at (11.15,3.85) {$O(h^{k+1})= O(\Delta t^{k+1})$};
     \draw[scale=0.75, gray, dash=on 2.25pt off 2.25pt phase 0pt, line width=0.4*0.75pt] (12.05,5.125) -- (12.75,5.125);
   \end{tikzpicture} \vspace*{-0.3cm}
 \caption{Convergence of geometrical accuracy in terms of $\operatorname{dist}^\ast$ for simultaneous space and time refinements for the smooth blending on the kite geometry, $s = 4$, $w_b = 0.1$.} \vspace*{-0.25cm}
 \label{fig:geom_approx_conv}
\end{figure}
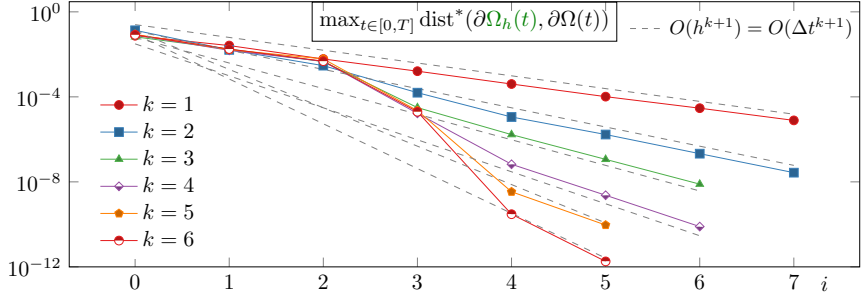


\subsection{Interpolation Quality}
As the next step, we solve a Ghost-penalty (\cite{burman2010ghost, burman15, preuss18, heimann20}) stabilized $L^2$ projection problem for a function $u = \cos (\pi \frac{r}{r_0}) \cdot \sin (\pi t)$. The discrete approximation $u_h \in \Vh{k_s,k_t}(\EQlin) \circ \Thehst^{-1}$ solves $a(u_h,v_h) = (u_h,v_h)_{\Qhn} + J(u_h,v_h) = (u, v_h)_{\Qhn}$ for all $v_h \in \Vh{k_s,k_t}(\EQlin) \circ \Thehst^{-1}$. where $J(\cdot,\cdot)$ is defined as in \cite{HLP2022} but without the $\frac{1}{h^2}$ scaling. We measure the numerical error in this refinement process in terms of the norms:
$ \| \cdot \|_{L^2(\Ohi{T})}$, and $\| \cdot \|_{L^2( \mathcal{Q}_h )}$
with $\mathcal{Q}_h = \bigcup_n \Qhn$. With these two $L^2$ norms, we choose two reasonable candidate norms for a first investigation, but of course also gradients in space or time derivatives could be included. (See e.g. \cite{preuss18})
We observe the results shown in \cref{num_res2}.
\begin{table}[htb!]\small \centering
 \begin{tabular}{l@{$\;$}l@{~~}l@{}l@{$\;$}l@{$\;$}l@{}l@{$\;$}l@{$\;$} l@{}l@{$\;$}l} \toprule
   &  & \multicolumn{3}{c}{FE blending} & \multicolumn{3}{c}{smooth blend., $w_b =0.1$} & \multicolumn{3}{c}{smooth blend., $w_b =0.2$} \\
  $k$ & $i$ & err: $L^2(T)$, & $L^2(\mathcal{Q}_h)$ & \scriptsize\texttt{(eoc)}\normalsize & err: $L^2(T)$, & $L^2(\mathcal{Q}_h)$ & \scriptsize\texttt{(eoc)}\normalsize & err: $L^2(T)$, & $L^2(\mathcal{Q}_h)$ & \scriptsize\texttt{(eoc)}\normalsize \\ \midrule
  2 & 0 & $1.24\!\cdot\!10^{-1}$ & $6.22\!\cdot\!10^{-2}$ &  & $1.41\!\cdot\!10^{-1}$ & $6.59\!\cdot\!10^{-2}$ &  & $1.51\!\cdot\!10^{-1}$ & $6.75\!\cdot\!10^{-2}$ & \\ 
 & 1 & $3.13\!\cdot\!10^{-2}$ & $1.42\!\cdot\!10^{-2}$ & \scriptsize\texttt{(2.1)}\normalsize & $3.56\!\cdot\!10^{-2}$ & $1.61\!\cdot\!10^{-2}$ & \scriptsize\texttt{(2.0)}\normalsize & $3.30\!\cdot\!10^{-2}$ & $1.57\!\cdot\!10^{-2}$ & \scriptsize\texttt{(2.1)}\normalsize \\ 
 & 2 & $3.85\!\cdot\!10^{-3}$ & $1.65\!\cdot\!10^{-3}$ & \scriptsize\texttt{(3.1)}\normalsize & $4.41\!\cdot\!10^{-3}$ & $1.73\!\cdot\!10^{-3}$ & \scriptsize\texttt{(3.2)}\normalsize & $3.09\!\cdot\!10^{-3}$ & $1.32\!\cdot\!10^{-3}$ & \scriptsize\texttt{(3.6)}\normalsize \\ 
 & 3 & $4.01\!\cdot\!10^{-4}$ & $1.67\!\cdot\!10^{-4}$ & \scriptsize\texttt{(3.3)}\normalsize & $3.39\!\cdot\!10^{-4}$ & $1.36\!\cdot\!10^{-4}$ & \scriptsize\texttt{(3.7)}\normalsize & $2.72\!\cdot\!10^{-4}$ & $1.21\!\cdot\!10^{-4}$ & \scriptsize\texttt{(3.4)}\normalsize \\ 
 & 4 & $4.25\!\cdot\!10^{-5}$ & $1.83\!\cdot\!10^{-5}$ & \scriptsize\texttt{(3.2)}\normalsize & $3.46\!\cdot\!10^{-5}$ & $1.55\!\cdot\!10^{-5}$ & \scriptsize\texttt{(3.1)}\normalsize & $3.11\!\cdot\!10^{-5}$ & $1.41\!\cdot\!10^{-5}$ & \scriptsize\texttt{(3.1)}\normalsize \\ 
 & 5 & $4.99\!\cdot\!10^{-6}$ & $2.17\!\cdot\!10^{-6}$ & \scriptsize\texttt{(3.1)}\normalsize & $4.25\!\cdot\!10^{-6}$ & $1.91\!\cdot\!10^{-6}$ & \scriptsize\texttt{(3.0)}\normalsize & $3.79\!\cdot\!10^{-6}$ & $1.74\!\cdot\!10^{-6}$ & \scriptsize\texttt{(3.0)}\normalsize \\ 
 & 6 & $6.13\!\cdot\!10^{-7}$ & $2.68\!\cdot\!10^{-7}$ & \scriptsize\texttt{(3.0)}\normalsize & $5.34\!\cdot\!10^{-7}$ & $2.41\!\cdot\!10^{-7}$ & \scriptsize\texttt{(3.0)}\normalsize & $4.77\!\cdot\!10^{-7}$ & $2.19\!\cdot\!10^{-7}$ & \scriptsize\texttt{(3.0)}\normalsize \\ 
 \midrule
  4 & 0 & $3.66\!\cdot\!10^{-2}$ & $2.16\!\cdot\!10^{-2}$ &  & $1.03\!\cdot\!10^{-1}$ & $3.78\!\cdot\!10^{-2}$ &  & $1.07\!\cdot\!10^{-1}$ & $4.34\!\cdot\!10^{-2}$ & \\ 
 & 1 & $1.96\!\cdot\!10^{-3}$ & $9.39\!\cdot\!10^{-4}$ & \scriptsize\texttt{(4.5)}\normalsize & $1.18\!\cdot\!10^{-2}$ & $5.88\!\cdot\!10^{-3}$ & \scriptsize\texttt{(2.7)}\normalsize & $2.15\!\cdot\!10^{-2}$ & $8.60\!\cdot\!10^{-3}$ & \scriptsize\texttt{(2.3)}\normalsize \\ 
 & 2 & $5.91\!\cdot\!10^{-5}$ & $2.61\!\cdot\!10^{-5}$ & \scriptsize\texttt{(5.2)}\normalsize & $3.78\!\cdot\!10^{-3}$ & $1.38\!\cdot\!10^{-3}$ & \scriptsize\texttt{(2.1)}\normalsize & $8.18\!\cdot\!10^{-4}$ & $4.62\!\cdot\!10^{-4}$ & \scriptsize\texttt{(4.2)}\normalsize \\ 
 & 3 & $7.13\!\cdot\!10^{-7}$ & $2.33\!\cdot\!10^{-7}$ & \scriptsize\texttt{(6.8)}\normalsize & $1.24\!\cdot\!10^{-4}$ & $3.52\!\cdot\!10^{-5}$ & \scriptsize\texttt{(5.3)}\normalsize & $3.48\!\cdot\!10^{-6}$ & $1.45\!\cdot\!10^{-6}$ & \scriptsize\texttt{(8.3)}\normalsize \\ 
 & 4 & $1.41\!\cdot\!10^{-8}$ & $4.90\!\cdot\!10^{-9}$ & \scriptsize\texttt{(5.6)}\normalsize & $4.82\!\cdot\!10^{-7}$ & $1.84\!\cdot\!10^{-7}$ & \scriptsize\texttt{(7.6)}\normalsize & $9.91\!\cdot\!10^{-8}$ & $4.14\!\cdot\!10^{-8}$ & \scriptsize\texttt{(5.1)}\normalsize \\ 
 & 5 & $3.38\!\cdot\!10^{-10}$ & $1.17\!\cdot\!10^{-10}$ & \scriptsize\texttt{(5.4)}\normalsize & $1.52\!\cdot\!10^{-8}$ & $5.61\!\cdot\!10^{-9}$ & \scriptsize\texttt{(5.0)}\normalsize & $2.90\!\cdot\!10^{-9}$ & $1.15\!\cdot\!10^{-9}$ & \scriptsize\texttt{(5.2)}\normalsize \\ 
 \midrule
  6 & 0 & $2.04\!\cdot\!10^{-2}$ & $8.75\!\cdot\!10^{-3}$ &  & $5.84\!\cdot\!10^{-2}$ & $2.27\!\cdot\!10^{-2}$ &  & $1.30\!\cdot\!10^{-1}$ & $4.23\!\cdot\!10^{-2}$ & \\ 
 & 1 & $7.71\!\cdot\!10^{-4}$ & $3.64\!\cdot\!10^{-4}$ & \scriptsize\texttt{(4.6)}\normalsize & $1.89\!\cdot\!10^{-2}$ & $7.86\!\cdot\!10^{-3}$ & \scriptsize\texttt{(1.5)}\normalsize & $8.72\!\cdot\!10^{-3}$ & $5.16\!\cdot\!10^{-3}$ & \scriptsize\texttt{(3.0)}\normalsize \\ 
 & 2 & $4.09\!\cdot\!10^{-5}$ & $8.95\!\cdot\!10^{-6}$ & \scriptsize\texttt{(5.3)}\normalsize & $2.22\!\cdot\!10^{-3}$ & $1.06\!\cdot\!10^{-3}$ & \scriptsize\texttt{(2.9)}\normalsize & $9.44\!\cdot\!10^{-4}$ & $2.48\!\cdot\!10^{-4}$ & \scriptsize\texttt{(4.4)}\normalsize \\ 
 & 3 & $1.21\!\cdot\!10^{-8}$ & $2.62\!\cdot\!10^{-9}$ & \scriptsize\texttt{(11.7)}\normalsize & $7.49\!\cdot\!10^{-5}$ & $2.49\!\cdot\!10^{-5}$ & \scriptsize\texttt{(5.4)}\normalsize & $9.27\!\cdot\!10^{-7}$ & $3.32\!\cdot\!10^{-7}$ & \scriptsize\texttt{(9.5)}\normalsize \\ 
 & 4 & $2.30\!\cdot\!10^{-11}$ & $4.94\!\cdot\!10^{-12}$ & \scriptsize\texttt{(9.1)}\normalsize & $1.22\!\cdot\!10^{-7}$ & $4.53\!\cdot\!10^{-8}$ & \scriptsize\texttt{(9.1)}\normalsize & $2.05\!\cdot\!10^{-8}$ & $7.48\!\cdot\!10^{-9}$ & \scriptsize\texttt{(5.5)}\normalsize \\ 
 \bottomrule
 \end{tabular}
\caption{Numerical results for stabilized $L^2$ projection problem on unfitted domain approximated isoparametrically. The function $u$ is smooth and the discretisation orders in space and time, and the respective geometry approximation orders are chosen to be $k$, and $i$ denotes simultaneous refinements in space and time. In the columns, different blending options are used: The \protect\FE~blending, and two variants of the smooth blending with $w_b\in\{0.1,0.2\}$ and $s=4$. Estimated orders of convergence are calculated based on the $L^2(\mathcal{Q}_h)$-norm. 
See \cite[Table 2]{arXiv} for $k\in\{1,3,5\}$, and \cite[Table 3]{arXiv} for $k=4,s=10$.} \vspace*{-0.7cm}
\label{num_res2}
\end{table}
As the function $u$ is smooth, and we chose the geometry approximation and discrete function space orders to be $k=k_s=k_t=q_s=q_t$, we expect e.g. for the $L^2(\mathcal{Q}_h)$-norm from \cref{interpollemma1} in combination of the boundedness of the according norms of $\hat u$ by means of \cref{interpollemma2} and \cref{boundedness_result_final} that for the smooth blending
\begin{equation*}
 \| u^e - u_h \|_{\mathcal{Q}_h} \lesssim h^{\min(q_s,k_s) +1}+ \Delta t^{\min(q_t, k_t)+1} \simeq h^{k+1} \simeq \Delta t^{k+1},
\end{equation*}
and for the \FE~blending
\begin{equation*}
 \| u^e - u_h \|_{\mathcal{Q}_h} \lesssim h^{\min(q_s,k_s) +1}+ \Delta t^{\min(q_t, k_t+1)} \simeq h^{k+1} + \Delta t^{k}.
\end{equation*}
Numerically, after a slightly different behavior in the pre-asymptotic regime, we observe that in all cases
\begin{equation*}
 \| u^e - u_h \|_{\mathcal{Q}_h} \lesssim h^{k+1} \simeq \Delta t^{k+1}.
\end{equation*}
This confirms the mathematical analysis for the smooth blending. For the \FE~blending, we obtain similar results, so that the theoretical loss of one order in time is not showing in the particular example. 
\begin{remark}[Approximation in the $L^2(T)$ norm]
In \cref{num_res2}, we have also included results for the respective numerical interpolation errors in the $L^2(T)$ norm.
Numerically, we observe the same convergence orders in this norms as with the $L^2(\mathcal{Q}_h)$ space-time norm, i.e. errors of order $\mathcal{O}(h^{k+1}) \simeq \mathcal{O}(\Delta t^{k+1})$. Our theoretical results would guarantee only $\mathcal{O}(h^{k+\frac{1}{2}}) \simeq \mathcal{O}(\Delta t^{k+\frac{1}{2}})$, cf. \cref{interpollemma1}. 
\end{remark}
\subsection{Boundedness of $\nabla \protect\Psii$ under general space-time refinements for the \protect\FE~blending}
In this section, we want to investigate numerically if specifically the refinement restriction of \cref{ass:dtlesssimh_feblend} is necessary in the case of the \FE~blending. To this end, we fix a large time step, $\Delta t= T$ (one time step) or $\Delta t = T/2$ (two time steps), for the kite geometry and refine the mesh with $h = 0.9 \cdot 0.5^{i_s}$ for $i=i_s=1,2,\dots$. Considering the proof of \cref{lemma_diff_Psi_id_small}, one would expect that potentially for fine meshes, a temporal error scaled by $\frac{1}{h}$ might result in an unbounded summand for e.g. the upper bound on $\| \nabla \Psii\|_\infty$. This, in turn, could become problematic for the interpolation procedure. To investigate whether we can observe this behavior in practice, we calculate a numerical approximation of $\Psii$ as follows: First, we use integration points of a third-order numerical integration on the cut elements $\ThbOne \times I_n$ as sample points to calculate $\d$ explicitly from its definition by a Newton search (based on the given levelset $\phii{}$). Next, we use these sampled points to derive an approximate discrete function $\tilde d$ by solving a \FE~interpolation problem. Finally, we combine this function with the exact $\G$, and apply the blending procedure (which implicitly follows from setting a discrete function on $\ThbOne$)
and calculate $\| \nabla \tilde \Psi \|_\infty$ of this function.
As a numerical compromise, we focus on one intermediate time level for each $\In{n}$ to construct $\tilde \Psi$ and calculate $\| \nabla \tilde \Psi \|_\infty$.
Two results are shown in \cref{fig:PsiTheta_bound_exps}. We observe that asymptotically, $\| \nabla \tilde \Psi \|_\infty$ scales as $\frac{1}{h}$, whereas the error of the $L^2$ interpolation problem remains constant. We conclude that the assumption \cref{ass:dtlesssimh_feblend} seems necessary to avoid unbounded spatial gradients of $\tilde \Psi$ (and hence $\Psi$), although the interpolation error might not deteriorate immediately. Let us stress that very fine spatial meshes were necessary to trigger the problem.
We note in passing that the behavior of $\| \nabla \Theh\|_\infty$ is different from $\Psii$, we observe a norm decay with $\mathcal{O}(h)$. 
\begin{figure}
\centering \vspace*{-0.25cm}
\begin{tikzpicture}[scale=0.75]
    \begin{semilogyaxis}[ xlabel=$i$, title={$\Delta t = T$},title style={at={(0.5,0.83)},draw=black},
    legend entries ={$\| \nabla \tilde \Psi \|_\infty$, $\| \nabla \Theh \|_\infty$, interp. err. $L^2(\mathcal{Q}_h)$}, legend style={anchor=north,legend columns=1, draw=none}, legend pos = south west, x label style={at={(axis description cs:0.95,0.05)},anchor=east},
     x tick label style={yshift=0cm,xshift=0.0em},
      y label style={at={(axis description cs:0.25,0.85)},anchor=east},
     ]

     \addplot table [x index = 0, y index =5] {num_exp/out/conv_kite_DG_ks2_kt1_space_nref8_gamma0.05_sm2_interpol_ro-10.dat};
     \addplot table [x index = 0, y index =6] {num_exp/out/conv_kite_DG_ks2_kt1_space_nref8_gamma0.05_sm2_interpol_ro-10.dat};
     \addplot table [x index = 0, y index =2] {num_exp/out/conv_kite_DG_ks2_kt1_space_nref8_gamma0.05_sm2_interpol_ro-10.dat};
     \addplot[gray, dashed, domain=3:7] {(1/2^(x+0.7)))};
     \addplot[gray, dotted, domain=3:7] {(1/2^(x-6)))^(-1)};
    \end{semilogyaxis}
    \node[scale=0.75, gray] at (6,1.25) {$O(h)$}; \node[scale=0.75, gray] at (5.5,4) {$O(\frac{1}{h})$};
   \end{tikzpicture}
   \begin{tikzpicture}[scale=0.75]
    \begin{semilogyaxis}[ xlabel=$i$, title={$\Delta t = T/2$},title style={at={(0.5,0.83)},draw=black},
    legend entries ={$\| \nabla \tilde \Psi \|_\infty$, $\| \nabla \Theh \|_\infty$, interp. err. $L^2(\mathcal{Q}_h)$}, legend style={anchor=north,legend columns=1, draw=none}, legend pos = south west, x label style={at={(axis description cs:0.95,0.05)},anchor=east},
     x tick label style={yshift=0cm,xshift=0.0em},
      y label style={at={(axis description cs:0.25,0.85)},anchor=east},
     ]

     \addplot table [x index = 0, y index =5] {num_exp/out/conv_kite_DG_ks2_kt1_space_nref8_gamma0.05_sm2_interpol_ro-9.dat};
     \addplot table [x index = 0, y index =6] {num_exp/out/conv_kite_DG_ks2_kt1_space_nref8_gamma0.05_sm2_interpol_ro-9.dat};
     \addplot table [x index = 0, y index =2] {num_exp/out/conv_kite_DG_ks2_kt1_space_nref8_gamma0.05_sm2_interpol_ro-9.dat};
     \addplot[gray, dashed, domain=3:7] {(1/2^(x+0.7)))};
     \addplot[gray, dotted, domain=3:7] {(1/2^(x-8)))^(-1)};
    \end{semilogyaxis}
    \node[scale=0.75, gray] at (6,1.25) {$O(h)$}; \node[scale=0.75, gray] at (5.5,3.5) {$O(\frac{1}{h})$};
   \end{tikzpicture} \vspace*{-0.25cm}
\caption{Numerical studies about $\| \nabla \tilde \Psi \|_\infty$ and $\| \nabla \protect\Theh\|_\infty$ for the \protect\FE~blending in the limit $h \to 0$ and $\Delta t$ large and fixed. In both cases, $\| \nabla \protect\Theh \|_\infty$ scales with $\mathcal{O}(h)$, and $\| \nabla \tilde \Psi \|_\infty$ with $O(\frac{1}{h})$. The correspondig interpolation error remains constant.} \vspace*{-0.7cm}
\label{fig:PsiTheta_bound_exps}
\end{figure}
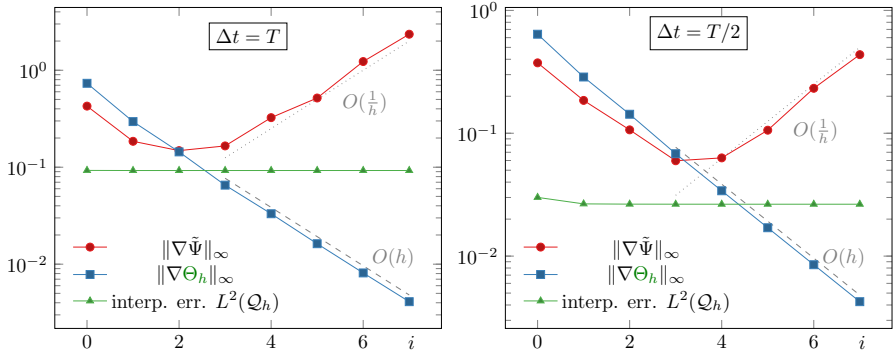


\section{Conclusion and Outlook}  \label{ref:concl}
We have presented a strategy to solve the problem of higher-order numerical integration on implicitly defined (smooth) space-time domains, arising within the context of unfitted space-time discretisations on moving domains. 
Crucial for the strategy are two space-time mesh deformations, a computable mesh deformation $\Thest$ and an ideal mesh deformation $\Psist$, which is not computationally feasible. The focus of this work is the detailed and rigorous analysis of the geometry approximation approach which revealed important properties such as proximity and boundedness which are important to understand the geometrical approximation and interpolation properties of \FE~methods on the deformed meshes.

A major output of this study is collected in \cref{lemma_diff_Theta_h_Psi_small} which gives bounds on the proximity of $\Psist$ and $\Thest$ (in different norms). Notable is the dependence of these results -- especially for the anisotropic cases where $\Delta t \not \simeq h$ -- on the chosen blending from deformations on active elements to the exterior. Only the \emph{smooth} blending yields optimal results here, whereas the \FE~blending can slightly suffer from a mesh-dependent decay of the mesh deformation. Proximity and the effect of the different blendings are also studied and validated numerically.

A second output of major importance in this study is the boundedness of $\Psist$ in higher-order derivatives which directly relates to the application of (optimal) space-time interpolation estimates. The corresponding results are collected in \cref{boundedness_result_final}.
Also here, we observe a dependence on the chosen blending.


Both the geometry approximation and interpolation results can now be taken into account in the rigorous mathematical analysis of unfitted space-time discretisations for different problems. In particular, we will apply the techniques to the DG discretisation in \cite{HLP2022} in upcoming work. Apart from this method for a bulk convection-diffusion problem, the structures proven in this paper can be applied similarly to other problems, such as surface problems (see e.g. \cite{sassreusken23,reusken2024analysis}) or multi-domain problems (see e.g. \cite{preuss18}).\footnote{To see how the corresponding geometry analysis for the merely spatial problem, \cite{LR_IMAJNA_2018}, finds application to a range of different problems, we refer the reader to \cite{L_GUFEMA_2017}, \cite{grande2018analysis}, and \cite{LO_ESAIM_2019}.}

\bibliographystyle{siamplain}
\bibliography{paper}

\appendix

\section*{Selected proofs}
In this section, we collect some proofs that are either tedious, but elementary or similar to other proofs presented in the main part or other literature.
\section{Selected proofs for \cref{lem:d}} \label{sec:more_proofsa} 
\subsection{Proof of \eqref{eq:da} for $(m_t,m_s)=(1,1)$} \label{app:damtms11}
\begin{proof}
We start to consider the first time derivative of \cref{eq_step_234} and bounding the left-hand side (term $A$) by $\mathcal{O}(h + \Delta t^{q_t})$:
\begin{align*}
 &\partial_t (\nabla\! \philin (x,t) \!-\! \nabla\! \phix(\ppi(x,t))) 
 =\underbrace{\partial_t \!\nabla\! \philin (x,t) - \partial_t \!\nabla\! \phix(x,t)}_{\lesssim h + \Delta t^{q_t} \text{by} \cref{diff_grad_phi_philin_dt}} + \underbrace{\partial_t \!\nabla\! \phix(x,t) - \partial_t \!\nabla\! \phix(\ppi(x,t))}_{\lesssim |\phix|_{3,\infty, \U} d \lesssim h^2 + \Delta t^{q_t+1}}
\end{align*}
Now, we have to apply the product rule on all the right-hand-side terms. We start with the first two summands (related to terms $B$ and $C$):
\begin{align*}
 &\partial_t (\nabla \d \G(x,t)^T \nabla \phix(\ppi(x,t)) + \d \nabla \G^T \nabla \phix(\ppi(x,t))) && \\
 =& (\partial_t \nabla \d) \G(x,t)^T \nabla \phix(\ppi(x,t)) + \nabla \d (\partial_t \G (x,t)^T) \nabla \phix(\ppi(x,t)) && (\hspace*{-0.25cm}\text{\hphantom{IIV}I}) + (\hspace*{-0.25cm}\text{\hphantom{IV}II})\\
 & + \nabla \d \G(x,t)^T \partial_t \nabla \phix(\ppi(x,t)) + (\partial_t \d) \nabla \G^T \nabla \phix(\ppi(x,t)) && (\hspace*{-0.25cm}\text{\hphantom{V}III}) + (\hspace*{-0.25cm}\text{\hphantom{II}IV})\\
 &+ \d (\partial_t \nabla \G^T) \nabla \phix(\ppi(x,t)) + \d \nabla \G^T  (\partial_t \nabla \phix(\ppi(x,t))). && (\hspace*{-0.25cm}\text{\hphantom{III}V}) + (\hspace*{-0.25cm}\text{\hphantom{II}VI})
\end{align*}
For I, we observe that the argument in \cref{eq_step_238} can be applied here as well to yield
\begin{equation} \nonumber
 |\text{I}| \simeq (\partial_t \nabla \d) (1 + h^2 + \Delta t^{q_t+1})
\end{equation}
For II, 
we argue with \eqref{eq:da} for $(m_s,m_t)=(1,0)$ to see
\begin{align*}
  |\text{II}| = | \nabla \d (\partial_t \G)^T (\nabla \phix) \!\circ\! \ppi | &\lesssim \|\nabla \d\| \| \partial_t \nabla \phix\|_\infty \left(\| \nabla \phix \|_\infty \! +\! h^2 \!+\! \Delta t^{q_t+1}\right) \label{eq_step_243-new} 
  \lesssim h + \Delta t^{q_t+1} 
\end{align*}
Regarding the third term, we argue as follows:
\begin{align}
 |\text{III}| =& |\nabla d \G(x,t)^T \partial_t \nabla \phix(\ppi(x,t))| \lesssim \| \nabla d \| | \phix |_{1,\infty,\U} \left( \partial_t \nabla \phix(x,t) + \mathcal{O}(h^2 + \Delta t^{q_t+1})\right) \nonumber \\
 &\lesssim \| \nabla d\|  |\phix|_{1,\infty,\U} |\phix|_{2,\infty,\U} \lesssim h + \Delta t^{q_t +1} \text{ by } \cref{eq:da}. \nonumber
\end{align}
For the fourth term, we combine the argument of \cref{eq_step_239} with \cref{eq_step_244} to obtain
\begin{align}
 |\text{IV}| \lesssim |\partial_t d| |\phix|_{2,\infty}(1+ h^2 + \Delta t^{q_1 +1}) \lesssim h^2 + \Delta t^{q_t} \nonumber
\end{align}
For V, we give the following adaptation of the argument in \cref{eq_step_239}:
\begin{align} \nonumber
 |\text{V}| &= |d (\partial_t \nabla \G^T) \nabla \phix(\ppi(x,t)))| \lesssim |d| |\phix|_{3,\infty, \U} (1 + \mathcal{O}(h^2 + \Delta t^{q_t+1})) \lesssim h^2 + \Delta t^{q_t +1} 
\end{align}
by \eqref{eq:da}.
Lastly, for VI, we have
\begin{align}
 |\text{VI}| &= |d \nabla \G^T  (\partial_t \nabla \phix(\ppi(x,t))))| \lesssim |d| |\phix|_{2,\infty,\U} (\partial_t \nabla \phix(x,t) + h^2 + \Delta t^{q_t+1}) \nonumber \\
 &\lesssim |d| |\phix|_{2,\infty,\U}^2 \lesssim h^2 + \Delta t^{q_t +1} \text{ by } \eqref{eq:da}.
\end{align}
Next, we investigate the time derivatives from the two remaining summand on the right-hand side of \cref{eq_step_234}:
\begin{align*}
  & |\partial_t ( \nabla b (\philin - \phix)(x,t) + b (\nabla \philin - \nabla \phix)(x,t))| \\
 \leq& |(\partial_t \nabla b) (\philin - \phix)| + |\nabla b (\partial_t \philin - \partial_t \phix)| + |\partial_t b (\nabla \philin - \nabla \phix)| + |b (\partial_t \nabla \philin - \partial_t \nabla \phix)| \\
 \lesssim&~ h + \Delta t^{q_t},
\end{align*}
by the known techniques/ previous results.
Taking all these results together, we obtain
\begin{equation}
| (\partial_t \nabla d) (1 + h^2 + \Delta t^{q_t + 1})| \lesssim h + \Delta t^{q_t},
\end{equation}
which finishes the proof of \cref{eq:da} for $(m_t,m_s)=(1,1)$.
\end{proof}

\subsection{Proof of \eqref{eq:db} for $m_t\geq 1$} \label{app:eqdbmtgt1}
\begin{proof}
We split the cases $m_t=1$ and $m_t>1$. 

\underline{Proof of \eqref{eq:db} for $m_t=1$}:\\
We exploit the chain rule on \eqref{char:pi} similarly as in the proof of \eqref{eq:da}:
\begin{align}
 & |\partial_t \philin  - (\partial_t \phix) \circ \ppi | \\
 & \leq |(\partial_t \d) \!\!\!\!\! \underbrace{\G^T (\nabla \phix) \circ \ppi}_{\overset{\cref{eq_step_238}}{\simeq} 1 + h^2 + \Delta t^{q_t + 1}} \!\!\!\!\!|  + |\d (\partial_t \G)^T (\nabla \phix) \circ \ppi \!| \label{eq_step_241}
 + |\partial_t \b \!\! \underbrace{(\philin \!-\! \phix)}_{\lesssim h^2 + \Delta t^{q_t+1}}\!\! | \!+\! | \b \underbrace{(\partial_t \philin \!-\! \partial_t \phix)}_{\lesssim h^2 + \Delta t^{q_t}}\!| \nonumber 
\end{align}
For the l.h.s. there obviously holds
\begin{align*}
  |\partial_t \philin - (\partial_t \phix) \circ \ppi | &\leq \!\!\!\!\!\!\underbrace{ |\partial_t \philin - \partial_t \phix|}_{\lesssim h^2 + \Delta t^{q_t} \textnormal{ by } \eqref{diff_grad_phi_philin_dt}} \!\!\!\!\!\! + \quad \underbrace{|(\partial_t \phix) - (\partial_t \phix) \circ \ppi|}_{\lesssim \| \partial_t \nabla \phix\|_{\infty} \d \lesssim h^2 + \Delta t^{q_t +1}} \lesssim h^2 + \Delta t^{q_t}.
\end{align*}
For the remaining summand, we argue with \eqref{eq:da} to see
\begin{align}
  |\d (\partial_t \G)^T (\nabla \phix) \circ \ppi| &\lesssim |\d| \| \partial_t \nabla \phix\|_\infty \left(\| \nabla \phix \|_\infty  + h^2 + \Delta t^{q_t+1}\right) \label{eq_step_243} 
  \lesssim h^2 + \Delta t^{q_t+1} 
\end{align}
Taking these results together, we obtain
\begin{equation}
 |\partial_t \d | (1+ h^2 + \Delta t^{q_t + 1}) \lesssim h^2 + \Delta t^{q_t}, \label{eq_step_244}
\end{equation}
which implies the claim in \cref{eq:db} for $m_t = 1$.

\underline{Proof of \eqref{eq:db} for $m_t>1$}:\\
In regards to $2 \leq m_t \leq q_t+1 $, we proceed similarly as in the proof 
of \eqref{eq:db} 
for $m_t=1$. First, we note that for the time derivatives of $ \b ( \phix - \philin)$, it holds with \eqref{diff_grad_phi_philin_dt} and \cref{ass:blending}
\begin{align*}
 \big| \partial_t^{m_t} \left( \b( \phix - \philin) \right) \big| &= \Big|\sum_{j=0}^{m_t} \binom{m_t}{j} (\partial_t^j \b) (\partial_t^{m_t - j} (\phix - \philin)) \Big| 
  \lesssim h^2 + \Delta t^{q_t + 1 - m_t}
\end{align*}
With $\phix \circ \ppi = \philin + \b (\phix-\philin)$ and hence 
$\partial_t^{m_t} (\phix \circ \ppi) = \partial_t^{m_t} \philin + \partial_t^{m_t} (\b (\phix-\philin) )$ this implies together with another application of \eqref{diff_grad_phi_philin_dt}
\begin{equation*}
 |\partial_t^{m_t} (\phix \circ \ppi - \phi)| \lesssim |\partial_t^{m_t} (\phi^{\text{lin}} - \phix)| + h^2 + \Delta t^{q_t + 1 - m_t} \lesssim h^2 + \Delta t^{q_t + 1 - m_t}.
\end{equation*}
Now we want to exchange $(\partial_t^{m_t} \phix)$ with $(\partial_t^{m_t} \phix) \circ \ppi$ and note that $\| \nabla \partial_t^{m_t} \phi \|_\infty \lesssim 1$, so that with \cref{eq:da} for $m_t=m_s=0$ we obtain
\begin{equation}
 |\partial_t^{m_t} (\phix \circ \ppi) -  (\partial_t^{m_t} \phix) \circ \ppi| \lesssim h^2 + \Delta t^{q_t + 1 - m_t}. \label{eq:dtmtphicircpi}
\end{equation}
We exploit this in now showing the result \cref{eq:db} for $2 \leq m_t \leq q_t+1$ by induction. So assume that \cref{eq:db} holds for $0,\dots,m_t-1$. By induction (over $m_t$) we can easily calculate the following equation
\begin{align*}
 \partial_t^{m_t} (\phix \circ \ppi) - (\partial_t^{m_t} \phix) \circ \ppi = \sum_{j=0}^{m_t-1} \partial_t^{j} \left[ (\nabla \partial_t^{m_t-1-j} \phix)\circ \ppi \cdot ( \partial_t (\d \G)) \right].
\end{align*}
Next, we want to separate the summand for $j=m_t-1$ and the remaining terms where we note that with \eqref{eq:dtmtphicircpi} we already have a proper bound for the l.h.s. of the equation. So, let's consider the summands for $j<m_t-1$: $\partial_t^{j} \left[ (\nabla \partial_t^{m_t-1-j} \phix)\circ \ppi \cdot ( \partial_t (\d \G)) \right]$. According to the Leibniz rule, we will consider the products of terms for $j_1+j_2=j$ where the $j_1$-th time derivative acts on $(\nabla \partial_t^{m_t-1-j} \phix)\circ \ppi$ and the ones where 
the $j_2$-th time derivative acts on $\partial_t (\d \G)$. For the former ones, we note that repeated application of the chain rule yields
\begin{align*}
|\partial_t^{j_1} ( (\nabla \partial_t^{m_t-1-{j_1}} \phix)\circ \ppi )|
& \!\lesssim\!  |(\nabla \partial_t^{m_t-1} \phix) \circ \ppi | + \big| \sum_{k=1}^{j_1} (\partial_t^{m_t-1-k} D^{k+1} \phix) \circ \ppi ~ \sum_{k=1}^{j_1} \partial_t^{k} (\d \G) \big|
\\[-3ex]
& \!\lesssim\! 1 + \big| \sum_{k=1}^{j_1} \partial_t^{k} (\d \G)  \big|
\end{align*}
For the latter ones we have $\partial_t^{j_2} \partial_t (\d \G) = \partial_t^{j_2+1} (\d \G)$
so that the products of the two terms are bounded by $ \partial_t^{j_2+1} (\d \G) + \sum_{k=1}^{j_1} \partial_t^{k} (\d \G) \partial_t^{j_2+1} (\d \G) \lesssim \partial_t^{m_t-1} (\d \G)$. With the regularity of $G$ the crucial remaining terms are the derivates on $d$ for which we can apply the induction hypothesis so that the summands for $j<m_t-1$ are bounded by
$h^2 + \Delta t^{q_t + 2 - m_t}$.
We hence find
\begin{equation}
 |\partial_t^{m_t-1} \left[ (\nabla \phix)\circ \ppi \cdot ( \partial_t (d G)) \right] | \lesssim h^2 + \Delta t^{q_t + 1 - m_t}.
\end{equation}
Now, applying the Leibniz rule again we obtain 
the summand 
$(\nabla \phix) \circ \ppi \cdot \partial_t^{m_t-1} (dG)$ and the remaining summands have the same bounds as the previously treated terms. Splitting the remaining term further as
$(\nabla \phix) \circ \ppi \cdot \partial_t^{m_t-1} (dG) = (\nabla \phix) \circ \ppi \cdot G ~~ \partial_t^{m_t-1} d + R$ with the remainder term $R$, we notice that $R$ is again bounded as before. With 
$(\nabla \phix) \circ \ppi \cdot G \simeq 1$ we hence finally have
\begin{equation*}
  | \partial_t^{m_t} d | \lesssim h^2 + \Delta t^{q_t + 1 - m_t}.
\end{equation*}
which implies the claim (by induction).
\end{proof}

\subsection{Proof of \eqref{eq:dc}} \label{app:eqdc}
\begin{proof}
  Fix $T \in \ThG$, $\alpha_s \in \mathbb{N}_0^d$ and $\alpha_t \in \mathbb{N}_0$ such that $|\alpha_s| \leq q_s+1, \alpha_t \leq q_t+1$. Then, we aim to show
  \begin{equation}
   \| D^{(\alpha_s, \alpha_t)} d\|_{\infty,T \times \In{n}} \lesssim 1.
  \end{equation}
  To this end, we introduce the function $F(x,t,y) = \phix(x + y \G(x,t), t) - \philin(x,t) + \b(x,t) (\philin(x,t) - \phix(x,t))$. Then the function $\d(x,t) = y(x,t)$ solves the implicit equation $F(x,t,y(x,t)) = 0$.
 
  By the implicit function theorem, then we obtain for all $\alpha \in \mathbb{N}_0^{d+1}$ with $|\alpha| = 1$
  \begin{equation}
   D^\alpha d = - \underbrace{\left( \frac{\partial F}{\partial y}(x,t,\d(x,t)) \right)^{-1}}_{:=A(x,t)} \cdot D^\alpha_{(x,t)} F(x,t,\d(x,t)) = -A \cdot D^\alpha_{(x,t)} F(x,t,\d(x,t)).
  \end{equation}
  Hence, for all $\alpha \in \mathbb{N}_0^{d+1}$ with $0 < |\alpha| \leq q_s + q_t +2$ (the boundedness of the zeroth and even first derivatives of $d$ follow from the even stronger bounds in the previous equations), we find $\alpha_1, \alpha_r \in \mathbb{N}_0^{d+1}$ s.t. $|\alpha_1| = 1$ and $\alpha = \alpha_1 + \alpha_r$, so that we can conclude,
  \begin{align}
   D^\alpha d &= D^{\alpha_r}( D^{\alpha_1} d ) = - D^{\alpha_r} ( A \cdot D^{\alpha_1}_{(x,t)} F(x,t,\d(x,t)) )\\
   &= -\sum_{\nu \leq \alpha_r} \binom{\alpha}{\nu} (D^\nu A) \cdot D^{\alpha-\nu} ( D^{\alpha_1}_{(x,t)} F(x,t,\d(x,t)) )
  \end{align}
  by the application of a Leibniz formula for multi-index derivatives. We are left with the task of showing boundedness of $D^\alpha A$ for all $\alpha \in \mathbb{N}_0^{d+1}$ s.t. $|\alpha| \leq q_s + q_t +1$ and of $D^\alpha F$ for $\alpha \in \mathbb{N}_0^{d+1}$ with $|\alpha| \leq q_s + q_t + 2$.
  
  Concerning the boundedness of $A$ itself, we start with the following observation in regard to its denominator:
  \begin{align*}
    \frac{\partial F}{\partial y}(x,t,\d(x,t)) &= \nabla \phix (x+ \d(x,t) \G(x,t), t) \nabla \phix (x,t) = \| \nabla \phix (x,t) \|_2^2 + \mathcal{O}(h^2 + \Delta t^{q_t +1})
  \end{align*}
  Hence, for sufficiently small meshes and time-steps, this expression is not only $\lesssim 1$ (as would be also implied by the boundedness of the derivatives of $F$ shown below) but even $\simeq 1$. We can conclude that $A \lesssim 1$ and even higher negative powers, $(\frac{\partial F}{\partial y}(x,t,\d(x,t)))^{-k} \lesssim 1$ for $k\in \mathbb{N}$. This is helpful to show the general boundedness of $D^\alpha A$, as we can write $A = i \circ \frac{\partial F}{\partial y}(x,t,\d(x,t))$, where $i\colon x \mapsto 1/x$. Then $D^\alpha A$ can be calculated by a higher-dimensional Faà di Bruno formula such as \cite[Lemma 3]{CIARLET1972217}, leaving us with a sum of combinatorial factors and (higher) derivatives of $i$ and $\frac{\partial F}{\partial y}(x,t,\d(x,t))$. Then the boundedness follows from the previous observations with the bounds on (derivatives of) $F$.
  
  Now in regards to $D^\alpha F$, we discuss the summands individually in the order: 1) $\philin(x,t)$, 2) $\b(x,t) (\philin(x,t) - \phix(x,t))$, 3) $\phix(x + y \G(x,t), t)$.
 
  Regarding 1): We note that $D^\alpha \philin(x,t) = 0$ if a higher spatial derivative than the first is considered. Else, $\| D^\alpha \philin(x,t) \| \leq \| D^\alpha \phix(x,t) \| - \| D^\alpha \philin(x,t)  - D^\alpha \phix(x,t)\|$, where the first summand is bounded by $\phix \in C^{q_s+q_t+2}(\U)$ and the second can be bounded by \cref{diff_grad_phi_philin_dt}.
 
  Regarding 2): From the previous point, we take the result $\| D^\alpha \philin \|_\infty \lesssim 1$ for all $\alpha$ with $|\alpha|<q_s+q_t+2$. Moreover, $D^\alpha b$ and $D^\alpha \phix$ are bounded similarly by the assumptions $b,\phix \in C^{k_s + k_t +2}$. This suffices to show
  \begin{equation}
   \| D^\alpha  (\b (\philin - \phix))\|_\infty \lesssim 1,
  \end{equation}
  as all terms that appear in a Leibniz formula for multi-index derivatives as given above are bounded.
 
  Regarding 3): So it remains to consider the summand $(x,t,y) \mapsto \phix(x+y \G(x,t),t)$, which means $D^\alpha_{(x,t)} \phix(x+y \G(x,t),t)$, where the evaluation point $y = \d(x,t)$ will be chosen. We define the helper function $\pi\colon (x,t) \mapsto (x + y \G(x,t),t)$, so that we have to bound $D^\alpha_{(x,t)} \phix \circ \pi$. To this end, we again apply \cite[Lemma 3]{CIARLET1972217}, so that bounds on the corresponding functions $\phix$ and $\pi$ are sufficient. In regards to $\phix$, the assumption $\phix \in C^{q_t + q_s +2}(\U)$ suffices. For $\pi$ we note that as $\G(x,t) = \nabla \phix(x,t)$, the previously mentioned assumption implies that $\G(x,t)\in C^{q_t + q_s +1}$, so that the result follows.
\end{proof}

\section{Further selected proofs} \label{sec:more_proofsb}
\subsection{Proof of \eqref{eq:grad-PsidtG-PsiG}}
\label{app:gradPsidtacc}
\begin{proof}
We prove the bound at $\ti{i}$, $\Vert \nabla (\PsidtGi{i} - \PsiGi{i}) \Vert_{\infty,\OG} \lesssim h^{q_s} + \Delta t^{q_t+1}$. There holds 
  \begin{align*}
  |&\nabla (\PsidtGi{i} - \PsiGi{i})|
  = 
  |\nabla (\ddti{i} \Gdti{i} - \di{i} \Gi{i})| \\
  & = 
  |\Gdti{i} - \Gi{i}| |\ddti{i}| + |\Gi{i}| |\nabla(\ddti{i} - \di{i})|
  + |\ddti{i} - \di{i}| |\nabla \Gi{i}|
  + |\di{i}| |\nabla (\Gdti{i} - \Gi{i})|
  \end{align*}
  From \eqref{diff_phid_phi} (with $m_s=1$ and $m_s=2$), \eqref{eq:ddt}, \eqref{eq:PsidtG-PsiG}, \eqref{eq:cnphi} and \eqref{eq:d} we can bound the first, third and fourth summand by the desired bound. Further we have $|\Gi{i}| \lesssim 1$ due to \eqref{eq:cnphi} so that it only remains to bound $|\nabla (\ddti{i} - \di{i})|$.
  
  Again, we start with \eqref{ddef} and \eqref{ddtdef} and fix a time $\ti{i}$, $i=0,..,q_t$ and $x \in \OG$.
     \begin{align*}
      (\phii{i} \circ \PsiGi{i})(x)  &= R(x) \coloneqq (1- \bi{i}(x)) \philini{i}(x) + \bi{i}(x) \phii{i}(x) \text{ and } \\
      (\phidti{i} \circ \PsidtGi{i})(x)  &= R_{\Delta t}(x) \coloneqq (1- \bi{i}(x)) \philini{i}(x) + \bi{i}(x) \phidti{i}(x). 
   \end{align*}
   where we defined $R$ and $R_{\Delta t}$ for later use.
    For $x \in \OG$ we set $y = \PsiGi{i}(x)$ and $y_{\Delta t} = \PsidtGi{i}(x)$
    and obtain from the chain rule, recalling $\Gi{i} = \nabla \phii{i}$ and some elementary calculus 
  \begin{align*}
    \nabla& R(x) = \nabla (\phii{i} \circ \PsiGi{i})(x) \\ 
    & = \nabla \phii{i}(y) + (\Gi{i}(x) \cdot \Gi{i}(y)) \nabla \di{i}(x)  + \di{i}(x) \nabla \Gi{i}(x) \nabla \phii{i}(y) \\
    \nabla& R_{\Delta t}(x) = \nabla (\phidti{i} \circ \PsidtGi{i})(x) \\
    & = \nabla \phidti{i}(y_{\Delta t}) + (\Gdti{i}(x) \cdot \Gdti{i}(y_{\Delta t})) \nabla \ddti{i}(x) + \ddti{i}(x) \nabla \Gdti{i}(x) \nabla \phidti{i}(y_{\Delta t})
  \end{align*}
  Subtracting both equations and moving the terms involving $\nabla \di{i}(x)$ and $\nabla \ddti{i}(x)$ to one side we obtain the equation $A=B-C-D$ with the expressions 
  \begin{align*}
    A & \coloneqq (\Gi{i}(x) \cdot\Gi{i}(y)) \nabla \di{i}(x) -
    (\Gdti{i}(x)\cdot\Gdti{i}(y_{\Delta t})) \nabla \ddti{i}(x), \\
    B &\coloneqq \nabla (R - R_{\Delta t})(x), \qquad \qquad C \coloneqq \nabla \phii{i}(y) - \nabla \phidti{i}(y_{\Delta t}), \\ D & \coloneqq (\di{i}(x) \nabla \Gi{i}(x) \nabla \phii{i}(y) - \ddti{i}(x) \nabla \Gdti{i}(x) \nabla \phidti{i}(y_{\Delta t}))
  \end{align*}  
  We will start splitting the term $A$ further with
  \begin{align*}
    A&= (\Gi{i}(x) \!\cdot\!\Gi{i}(y)) \nabla \di{i}(x) \!-\!
    (\Gdti{i}(x)\!\cdot\!\Gdti{i}(y_{\Delta t})) \nabla \ddti{i}(x) \\
    & = \underbrace{(\Gi{i}(x) \!\cdot\!\Gi{i}(y)) \nabla (\di{i} - \ddti{i})(x)}_{\eqqcolon A_1} - 
    \underbrace{(\Gi{i}(x) \!\cdot\!\Gi{i}(y)-\Gdti{i}(x)\!\cdot\!\Gdti{i}(y_{\Delta t})) \nabla \ddti{i}(x)}_{\eqqcolon A_2}
  \end{align*}
  For $A_1$ we note that $|y-x|=|\PsiGi{i}(x)-x| = |(\di{i} \Gi{i})(x)|$ and hence with \eqref{eq:da} $|y-x|\lesssim h^2 + \Delta t^{q_t +1}$. Further we have $|\nabla \Gi{i}|=|D^2 \phii{i}|\lesssim 1$ from \eqref{eq:cnphi} so that
  $\Gi{i}(y) = \Gi{i}(x) + \mathcal{O}(h^2 + \Delta t^{q_t +1})$.
  This yields 
  $$
  \Gi{i}(x) \cdot \Gi{i}(y) = \| \nabla \phii{i}(x) \|^2 ( 1 + \mathcal{O}(h^2 + \Delta t^{q_t +1})) \stackrel{\eqref{eq:cnphi}}{\simeq} 1,
  $$
  so that we can rearrange the balance $A = B - C - D$ to 
  $$
  |\nabla(\di{i} - \ddti{i})(x)| \lesssim | A_2 | + | B | + | C | + | D |
  $$
  We will now bound each of these terms separately with $\lesssim h^{q_s} + \Delta t^{q_t +1}$ which will yield the desired result.
  \\ \underline{\underline{$A_2$:}}
  With $|y-y_{\Delta t}|=|(\PsiGi{i}-\PsidtGi{i})(x)|$ and the bound from 
  \eqref{eq:PsidtG-PsiG} we have
  $$
  \Gdti{i}(y_{\Delta t}) = \Gdti{i}(y) + \mathcal{O}(h^{q_s+1}+\Delta t^{q_t+1}) \cdot \|D \Gdti{i} \|_{\infty,\OG}
  $$
  where $\|D \Gdti{i} \|_{\infty,\OG} \lesssim 1$ with \eqref{diff_phid_phi} and $m_s=1$ and \eqref{eq:cnphi}. From \eqref{diff_phid_phi} we also obtain $\Gdti{i}(y) = \Gi{i}(y) + \mathcal{O}(h^{q_s} + \Delta t^{q_t+1})$ so that together with \eqref{eq:cnphi} we have 
  $$
  \Gdti{i}(x) \cdot \Gdti{i}(y_{\Delta t}) = \Gi{i}(x) \cdot \Gi{i}(y) + \mathcal{O}(h^{q_s}+\Delta t^{q_t+1}) 
  $$
  With \eqref{eq:ddt} (for the derivative) we then make the rough estimate $A_2 \lesssim h^{q_s}+ \Delta t^{q_t+1}$. 
  \\ \underline{\underline{$B$:}}
  A bound for $B$ follows easily with \cref{ass:blending} and \eqref{diff_phid_phi}:
  $$
  |\nabla(R-R_{\Delta t})(x)| \lesssim \underbrace{|\nabla \bi{i}(x)|}_{\lesssim 1} \underbrace{|(\phii{i}-\phidti{i})(x)|}_{\lesssim h^{q_s+1} + \Delta t^{q_t+1}}
  + \underbrace{|\bi{i}(x)|}_{\lesssim 1} \underbrace{|\nabla(\phii{i}-\phidti{i})(x)|}_{\lesssim h^{q_s} + \Delta t^{q_t+1}} \lesssim h^{q_s} + \Delta t^{q_t+1}
  $$
  \\ \underline{\underline{$C$:}}
  For $C$ we again apply \eqref{diff_phid_phi} with $m_s=1$ to obtain
  $$
  |\nabla \phii{i}(y) \!-\! \nabla \phidti{i}(y_{\Delta t})|
  \lesssim \underbrace{|\nabla (\phii{i} - \phidti{i})(y)|}_{\lesssim h^{q_s}+\Delta t^{q_t+1}}
  + \underbrace{|\nabla \phidti{i}(y) \!-\! \nabla \phidti{i}(y_{\Delta t})|}_{
    \lesssim \| D^2 \phidti{i} \|_{\infty,\Ubar} |y-y_{\Delta t}|}
  $$
  Now, we have $\| D^2 \phidti{i} \|_{\infty,\Ubar} \lesssim \| D^2 \phii{i} \|_{\infty,\Ubar} \lesssim 1$ due to \eqref{diff_phid_phi} with $m_s=2$ and \eqref{eq:cnphi} and $|y-y_{\Delta t}| \lesssim h^{q_s+1} + \Delta t^{q_s+1}$ with 
  \eqref{eq:PsidtG-PsiG}, so that also $|C|$ has the desired upper bound.
  \\ \underline{\underline{$D$:}}
  Telescoping $a_1 \bOne c_{1,1} - a_2 \bTwo c_{2,2} = (a_1-a_2) \bOne c_{1,1} + a_2 (\bOne-\bTwo) c_{1,1} + a_2 \bTwo (c_{1,1}-c_{1,2}) + a_2 \bTwo (c_{1,2}-c_{2,2})$ we obtain
  \begin{align*}
  (\di{i}(x) \nabla \Gi{i}(x) & \nabla \phii{i}(y) \!-\! \ddti{i}(x) \nabla \Gdti{i}(x) \nabla \phidti{i}(y_{\Delta t})) = D_1 + D_2 + D_3 + D_4 \text{ with }\\
  &D_1 = (\di{i}-\ddti{i})(x) \nabla \Gi{i}(x) \nabla \phii{i}(y) \\ 
  &D_2 = \ddti{i}(x) \nabla (\Gi{i} - \Gdti{i})(x) \nabla \phii{i}(y)  \\
  &D_3 = \ddti{i}(x) \nabla \Gdti{i}(x) (\nabla \phii{i}(y) -  \nabla \phidti{i}(y)) \\
  &D_4 = \ddti{i}(x) \nabla \Gdti{i}(x) (\nabla \phidti{i}(y) -  \nabla \phidti{i}(y_{\Delta t}))
  \end{align*}
  With \eqref{eq:di-ddti} and \eqref{eq:cnphi} we find $|D_1| \lesssim h^{q_s+1} + \Delta t^{q_t+1}$. Next, with \eqref{eq:ddt} and \eqref{diff_phid_phi} (with $m_s=2$) and \eqref{eq:cnphi} we get 
  $|D_2| \lesssim h^2 \cdot (h^{q_s-1}+\Delta t^{q_t+1}) \cdot 1$ which suffices for the desired bound. Next, with \eqref{eq:ddt}, \eqref{eq:cnphi} and \eqref{diff_phid_phi} (with $m_s=1$) we also have $|D_3| \lesssim h^2 \cdot 1 \cdot (h^{q_s}+\Delta t^{q_t+1})$ which again suffices. 
  Finally, with \eqref{eq:ddt}, \eqref{eq:cnphi}, 
  $\| D^2 \phidti{i} \|_{\infty,\Ubar} \lesssim \| D^2 \phii{i} \|_{\infty,\Ubar} \lesssim 1$ due to \eqref{diff_phid_phi} with $m_s=2$ and \eqref{eq:cnphi} 
  and $|y-y_{\Delta t}| \lesssim h^{q_s+1} + \Delta t^{q_s+1}$ with 
  \eqref{eq:PsidtG-PsiG},
  we have $|D_4| \lesssim h^2 \cdot 1 \cdot (h^{q_s+1}+\Delta t^{q_t+1})$ which concludes the proof.
\end{proof}

\subsection{Proof of \cref{G_Gh_diff_lemma} for \texorpdfstring{$G_h = P_h^\Gamma \nabla \phi_h$}{projected normal field}}
\label{app:proofG_Gh_diff_lemma}
\begin{proof}
  Let $\Gh = \PhG \nabla \phih$. We will make use of an inverse inequality to deal with the spatial derivative of $\PhG \nabla \phih$. To circumvent using the inverse inequality on terms that involve temporal resolution quantities 
  we apply a triangle inequality introducing $\PhG \nabla \phidt$ and $\nabla \phidt$. Exploit smoothness of $\phidt$ so that $\PhG \nabla \phidt = \Is{q_s} \nabla \phidt$ we obtain
  \begin{align*}
   & \| D^{m} (\PhG \nabla \phih - \nabla \phix)\|_{\infty, \ThGQ} \\
   \leq & \underbrace{\| D^{m} \PhG \nabla (\phih\!-\! \phidt) \|_{\infty, \ThGQ}}_{=:A}  + \underbrace{\| D^{m} (\Is{q_s} \nabla \phidt\!-\!\nabla \phidt)\|_{\infty, \ThGQ}}_{=:B} + \underbrace{\| D^{m} \nabla (\phidt\!-\!\phix) \|_{\infty, \ThGQ}}_{=:C} 
  \end{align*}
 For $A$ we apply an inverse inequality, exploit continuity of $\PhG$ and \eqref{diff_phid_phi}:
 \begin{equation*}
 A \lesssim h^{-m} \| \PhG \nabla (\phih\!-\! \phidt) \|_{\infty, \ThGQ}   
 \lesssim h^{-m} \| \nabla (\phih\!-\! \phidt) \|_{\infty, \ThGQ}   
 \lesssim h^{q_s-m} 
 \end{equation*}
 For $B$ we apply standard interpolation estimates 
 \begin{equation*}
   B \lesssim h^{q_s-m} \| D^{q_s+1} \phidt \|_{\infty, \ThGQ}   
 \lesssim h^{q_s-m} 
 \end{equation*}
 where the boundedness (by a constant) of $D^{q_s+1} \phidt$ follows from 
 \eqref{diff_phid_phi} with $m_s=q_s+1$ and the regularity of $\phi$.
 Finally, for $C$ we can directly apply \eqref{diff_phid_phi} with $m_s=m+1$ to get $C \lesssim h^{q_s-m}$. This concludes the proof.
 \end{proof}

 \subsection{Proof of \cref{diff_phi_phihn_spaceD_special}}\label{app:diff_phi_phihn_spaceD_special}
 \begin{proof}
 We apply a several-variable version of Taylor's theorem with Lagrange remainder on $\ET \phihi{i} - \phii{i}$ where we recall that $\ET \phihi{i} \in \mathcal{P}^{q_s}$: We obtain the existence of $\xi \in (0,1)$ s.t.
  \begin{align*}
 |D^m (\ET \phihi{i}& - \phii{i})(y)| = \Big| {\textstyle \sum_{r=0}^{q_s-m}} {(r!)}^{-1} D^{r+m}(\ET \phihi{i} - \phii{i})(x)(\overbrace{y-x,\dots,y-x}^{r\text{ times}})  \\
  & -  (q_s\!+\!1\!-\!m)!^{-1} D^{q_s+1}\phii{i}(x + \xi(y-x)) (\underbrace{y-x, \dots, y-x}_{(q_s+1-m)\text{ times}}) \Big| \\
  \lesssim& {\textstyle \sum_{r=0}^{q_s}} |D^{r}(\phihi{i} - \phii{i})|_{\infty,T} |y-x|^r + |y-x|^{q_s+1-m}.
\end{align*}
We have $|D^{r}(\phihi{i} - \phii{i}) |_{\infty,T} \lesssim h^{q_s + 1 - r} + \Delta t^{q_t + 1}$ by \cref{diff_phi_phihn_spaceD} and $|y-x|\lesssim h$ so that  \eqref{eq:diff_phi_phihn_spaceD_special:1} follows. 
The second equation follows along the same lines noting that $\ET$ only acts on the spatial variable and that 
\eqref{diff_phi_phihn_spaceD} also provides the corresponding bounds for the temporal derivatives for any $t\in\In{n}$. Finally, for the third equation, we can apply the same approach to get 
\begin{align*}
 |D^m (\ET \phihi{i} - \phidti{i})(y)| 
  \lesssim {\textstyle \sum_{r=0}^{q_s}} |D^{r}(\phihi{i} - \phidti{i})|_{\infty,T} h^r + h^{q_s+1-m} \lesssim h^{q_s+1-m}
\end{align*}
with \eqref{diff_phid_phih}. 
\end{proof}

\subsection{Proof of the second estimate in \eqref{d_h_lemma_eq2}} \label{app:d_h_lemma_eq2}
\begin{proof}{
  We treat the second estimate in \cref{d_h_lemma_eq2}. For this, we take the gradient of both sides of \cref{def_dhi}.
  We start with the r.h.s. term and take $x$ as argument for $\philini{i}$, $\bi{i}$ and $\phihi{i}$, respectively:
  \begin{align*}
  &(1-\bi{i}) \nabla \philini{i} + \bi{i} \nabla \phihi{i}
  - \nabla \bi{i} \philini{i} + \nabla \bi{i} \phihi{i} 
   \\ &= \nabla \philini{i} + \bi{i} \nabla ( \phihi{i} - \philini{i} ) + \nabla \bi{i} (\phihi{i} - \philini{i}) 
  \stackrel{\text{Cor.}\ref{diff_phihn_philin}}{=} \nabla \philini{i} + \mathcal{O}(h),
 \end{align*}
 where in the last step we used $\bi{i} \in [0,1]$ and $|\nabla \bi{i}|\lesssim 1$ and \eqref{eq:diff_phihn_philin} from \cref{diff_phihn_philin} with $m=0$ and $m=1$, respectively.
 It is now of interest to compare this term with the result of taking the gradient on the other side of \cref{def_dhi}, denoting $y_h := x+ (\dhi{i} \Ghi{i})(x)$
 \begin{align*}
 \underbrace{\nabla \philini{i}(x)}_{A} + \mathcal{O}(h) = & \underbrace{\nabla (\ET \phihi{i})(y_h)}_{B} + \nabla \dhi{i}(x) \underbrace{(\nabla (\ET \phihi{i})(y_h)) \cdot \Ghi{i}(x)}_{C} \\
 & + \underbrace{\dhi{i}(x) (\nabla \Ghi{i}(x)) \cdot \nabla (\ET \phihi{i})(y_h)}_{D}
 \end{align*}
 We will first show that $|A-B| = \mathcal{O}(h+ \Delta t^{q_t+1})$.
 We apply \eqref{eq:diff_phi_phihn_spaceD_special:1} with $m=1$ from \cref{diff_phi_phihn_spaceD_special}
 to obtain $\nabla (\ET \phihi{i})(y_h) = \nabla \phii{i}(y_h) + \mathcal{O}(h^{q_s} + \Delta t^{q_t+1})$. 
 With $|y_h - x| \lesssim |\dhi{i}(x)| |\Ghi{i}(x)| \lesssim h^2 + \Delta t^{q_t+1}$,
 which follows from the first estimate in \cref{d_h_lemma_eq2},
 \eqref{diff_phi_phihn_spaceD} 
 and $\|D^2 \phi\|_{\infty,U} \lesssim 1$ from \eqref{eq:cnphi}
 we have 
 \begin{equation}
 \nabla (\ET \phihi{i})(y_h) = \nabla \phii{i}(x) + \mathcal{O}(h)
 = \nabla \philini{i}(x) + \mathcal{O}(h + \Delta t^{q_t+1})
 \label{eq:temp1}
 \end{equation}
 where we exploited 
 \eqref{diff_grad_phi_philin_dt}  with $(m_s,m_t)=(1,0)$.
 Now, 
 we achieved $|A-B| = \mathcal{O}(h+ \Delta t^{q_t+1})$.
 Next, we consider $D$. We know already $|\dhi{i}(x)|\lesssim h^2 + \Delta t^{q_t+1}$ and with $(m_s,m_t)=(2,0)$ in \eqref{diff_phi_phihn_spaceD} also
 $| \nabla \Ghi{i}| = |\nabla \Gi{i}| + \mathcal{O}(h^{q_s-1}+\Delta t^{q_t+1}) \lesssim 1$ and $|\nabla(\ET \phihi{i})(y_h)| = |\nabla \phii{i}(x)| + \mathcal{O}(h)\lesssim 1$ so that $|D| \lesssim h^2 + \Delta t^{q_t+1}$. 
 As a preliminary result, we can summarize $C \nabla \dhi{i}(x) = \mathcal{O}(h+ \Delta t^{q_t+1})$. 
 
 It remains to treat $C$. To this end, we use the first equality of \eqref{eq:temp1} again but additionally combine it with \eqref{G_Gh_diff_lemma_eq2} to get
 $\nabla (\ET \phihi{i})(y_h) = \Ghi{i}(x) + \mathcal{O}(h+ \Delta t^{q_t+1})$. 
 Hence, we have $C = |\Ghi{i}(x)|^2 + \mathcal{O}(h+ \Delta t^{q_t+1})$. As $|\Ghi{i}(x)|^2$ is bounded from above and below away from zero we can hence divide by $C$ and obtain 
 $\dhi{i}(x) = \mathcal{O}(h+ \Delta t^{q_t+1}).$
  }
\end{proof}

\subsection{Proof of \eqref{d_H_lemma_eq3}} \label{sec:proof:d_H_lemma_eq3}
\begin{proof}
  {
  We proceed similarly as in the proof of \eqref{eq:db} for $m_t>1$. 
  Recall the defining equation for $\dH$ from \eqref{def_dH} and defining 
  $\pi_H(x,t) := (x + \dH(x,t) \GH(x,t),t)$ we have 
  \begin{equation}
    \ET \phiH \circ \pi_H = (1-\b) \philin + \b \phiH = \philin + \b (\phiH - \philin). \tag{\ref{def_dH}'}
   \end{equation}
   For the time derivatives of $ \b ( \phiH - \philin)$, it holds with 
   \eqref{diff_phiH_phih}, \eqref{eq:phidt-philin} and \cref{ass:blending}
\begin{align*}
 \partial_t^{m_t} \left( \b( \phiH - \philin) \right) &= \sum_{j=0}^{m_t} \binom{m_t}{j} (\partial_t^j \b) (\partial_t^{m_t - j} (\phiH - \philin))  
  \lesssim h^2 + \Delta t^{q_t + 1 - m_t}
\end{align*}
With 
$\partial_t^{m_t} (\ET \phiH \circ \pi_H) = \partial_t^{m_t} \philin + \partial_t^{m_t} (\b (\phiH-\philin) )$ this implies together with another application of \eqref{diff_grad_phi_philin_dt} and \eqref{diff_ETphiH_phi}
\begin{equation*}
 | \partial_t^{m_t} (\phix \circ \pi_H - \phix) | \lesssim | \partial_t^{m_t} (\philin - \phix)| + h^2 + \Delta t^{q_t + 1 - m_t} \lesssim h^2 + \Delta t^{q_t + 1 - m_t}.
\end{equation*}
We are now in an almost identical setting as in the proof of 
\eqref{eq:db} for $m_t>1$ in \cref{app:eqdbmtgt1} before \eqref{eq:dtmtphicircpi}. We can proceed as in the second half of that proof with the only change that $\pi$ is replaced by $\pi_H$ and hence $\d\G$ is replaced by $\dH\GH$. 
}
\end{proof}

\subsection{Proof of \cref{Psi_H_Psi_diff_lemma}} \label{app:Psi_H_Psi_diff_lemma}
\begin{proof}
  We start with a simple splitting of the difference:
  \begin{align*}
    \| & \partial_t (\PsiHG - \PsiG) \|_{\infty,\QGn} 
    = \| \partial_t (\dH \GH - \d \G ) \|_{\infty,\QGn} 
    \\
    &\leq \| \partial_t (\d (\GH - \G )) \|_{\infty,\QGn} 
    + \| \partial_t ((\dH - \d) \GH) \|_{\infty,\QGn} 
    \\
    & \leq \| (\GH - \G ) \|_{\infty,\QGn}  \|\partial_t \d \|_{\infty,\QGn}    
      + \| \d \|_{\infty,\QGn}  \|\partial_t (\GH - \G ) \|_{\infty,\QGn} 
      \\
    &\hphantom{\leq} 
    + \| \dH - \d \|_{\infty,\QGn}  \| \partial_t \GH \|_{\infty,\QGn} 
    + \| \GH \|_{\infty,\QGn}  \|\partial_t (\dH - \d)  \|_{\infty,\QGn}          
     = \!{\!I\!} + {\!II\!} + {\!III\!} + {\!IV\!}
  \end{align*}
  We treat the terms $I$, $II$, $III$ and $IV$ one after another, where sufficient estimates for the first three terms are comparably easily compiled and $IV$ is more involved. To reduce the complexity of the proof we only treat the case where $G_h = \nabla \phi_h$ and  $G_H = \nabla \phi_H$, but the proof can be adapted to the case where $G_h = \PhG \nabla \phi_h$ and  $G_H = \PhG \nabla \phi_H$.

  For $I$ we obtain with
  \eqref{diff_phiH_phi} (with $m_s=1$, $m_t=0$) and \eqref{eq:db} the bound 
  $I \lesssim (h^{q_s} + \Delta t^{q_t+1}) (h^2 + \Delta t^{q_t})$ which suffices for the desired bound $h^{q_s+1} + \Delta t^{q_t}$. 
  For $II$ we use  \eqref{diff_phiH_phi} (with $m_t=1$, $m_s=1$) and \eqref{eq:da} and to get $II \lesssim (h^{q_s} + \Delta t^{q_t}) (h^2 + \Delta t^{q_t+1})$. With \eqref{delta_t_smallness} this also yields a sufficient bound. 
  For $III$ we have $\|\partial_t \GH \|_{\infty,\QGn} \leq \|\partial_t \G \|_{\infty,\QGn} + \|\partial_t (\G - \GH) \|_{\infty,\QGn} \lesssim 1$ with 
  \eqref{diff_phiH_phi} (with $m_t=1$, $m_s=1$) and \eqref{eq:cnphi}
  and bound
  \begin{equation*}
  \| \dH - \d \|_{\infty,\QGn} \leq 
  \| \dH - \ddh \|_{\infty,\QGn}
 + \| \ddh - \It{q_t} \d \|_{\infty,\QGn}
 + \| \It{q_t} \d - \d \|_{\infty,\QGn} 
  = III_a + III_b + III_c
  \end{equation*}
  where $\It{q_t} \d$ is the Lagrange interpolant of $\d$ at the time nodes $\ti{i}$, $i=0,\dots,q_t$ (note the regularity for $\d$ provided in \cref{lem:d}).
  For $III_a$ we have $III_a \lesssim \Delta t^{q_t+1}$ from the previous lemma. For $III_c$ we have
  $\| \It{q_t} \d - \d \|_{\infty,\QGn} \lesssim \Delta t^{q_t+1} \| \partial_t^{q_t+1} \d \|_{\infty,\QGn} \lesssim \Delta t^{q_t+1}$ with \eqref{eq:db}. The expression in $III_b$ is discrete in time and it hence suffices to bound terms on the time nodes $i = 0, \dots, q_t$, i.e $\| \dhi{i} -  \di{i} \|_{\infty,\OG}$. Hence, with \eqref{eq:di-ddti} and \eqref{ddt_d_hi_diff_bound} we obtain the bound $III \lesssim h^{q_s+1} + \Delta t^{q_t+1}$ which is sufficient for the desired bound.
  Finally, for $IV$ we have $\| \GH \|_{\infty,\QGn} \leq \| \G \|_{\infty,\QGn} + \| \GH - \G \|_{\infty,\QGn} \lesssim 1$ 
  with \eqref{diff_phiH_phi} (with $m_t=0$, $m_s=1$) and \eqref{eq:cnphi}. 
  
  It remains to establish a bound $\| \partial_t (\dH - \d) \|_{\infty,\QGn} \lesssim h^{q_s+1} + h^{q_t}$. 
    To this end we follow similar ideas as in the proof of \eqref{eq:db} for $m_t=1$.  
  From the definitions of $\dH$ and $\d$ in 
  \eqref{def_dH} and \eqref{ddef} we have 
  \begin{align*}
    \ET \phiH (x + (\dH \GH)(x,t),t) & = (1-\b(x,t)) \philin(x,t) + \b(x,t) \phiH(x,t) \\ 
    \phix(x + (\d \G)(x,t), t) & = (1- \b(x,t)) \philin(x,t) + \b(x,t) \phix(x,t) 
\intertext{Subtracting both equations (and smuggling in $\phix(x + (\dH \GH)(x,t),t)$) yields after dropping the arguments $(x,t)$ for $\dH \GH$ and $\d \G$
} 
\underbrace{\phix(x + \dH \GH, t) - \phix(x + \d \G, t)}_{=:A(x,t)} & = \underbrace{(\phix - \ET \phiH)(x + \dH \GH,t)}_{=:B(x,t)}  + \underbrace{\b(x,t) (\phiH - \phix)(x,t)}_{=:C(x,t)}.
 \\[-6ex]
  \end{align*}
Now deriving $A(x,t)$ after time yields  \vspace*{-0.5cm}
\begin{align*} 
  \partial_t& A(x,t) = \partial_t(\dH \GH - \d \G) \!\cdot\! \overbrace{\nabla \phi(x,t)}^{=\G} - \overbrace{(\partial_t \phi(x + \d \G, t) - \partial_t \phi(x + \dH \GH, t))}^{=:D(x,t)} \\
  &= \G \!\cdot\! ( \partial_t (\dH - \d) \GH + \partial_t \d \!\cdot\! (\GH \!-\! \G)) + \partial_t(\GH\!-\!\G) \!\cdot\! \d + \partial_t \GH \!\cdot\! (\dH  \! - \! \d) - D(x,t) \\
  &= \G\!\cdot\!\GH \partial_t (\dH - \d) + \underbrace{\partial_t \d \GH \!\cdot\! (\GH \!-\! \G)}_{=:A_2(x,t)} + \underbrace{\partial_t(\GH\!-\!\G) \!\cdot\! \d}_{=:A_3(x,t)} + \underbrace{\partial_t \GH \!\cdot\! (\dH  \! - \! \d)}_{=:A_4(x,t)} - D(x,t) \\[-5ex]
\end{align*}
which allows to isolate the desired expression $\partial_t(\dH -\d)$ and we obtain
\begin{align*}
  |\partial_t (\dH - \d) (x,t)| & \leq \frac{1}{\G\!\cdot\!\GH} \left( \sum_{i=2}^4 |A_i(x,t)| + |\partial_t B(x,t)| + |\partial_t C(x,t)| + |D(x,t)|  \right) \\
\end{align*}
There is $ \G \cdot \GH = \| \G\|^2 - \G \cdot (\G - \GH) \geq  \| \G\| ( \| \G\|  - c (h^{q_s} + \Delta t^{q_t+1}))\gtrsim 1$ (for $h$ and $\Delta t$ sufficiently small) and we can hence bound the terms $A_i$, $i=2,3,4$ and $\partial_t B$, $\partial_t C$ and $D$ one after another.
For $A_2$, $A_3$ and $A_4$ we observe that the necessary bounds follow along the lines of the bound for I, II and III above.

We have that $|\partial_t B|$ is directly in the form of the l.h.s. of \eqref{diff_ETphiH_phi} and hence has a suitable bound.
Next, we note $|\partial_t C| \leq  |\partial_t \b| |\phiH - \phix| + |\b| |\partial_t (\phiH - \phix)|$ where $|\b|, |\partial_t \b| \lesssim 1$ and $|\phiH - \phix| , |\partial_t (\phiH - \phix)| \lesssim h^{q_s+1} + \Delta t^{q_t}$ by \eqref{diff_phiH_phi}.
Finally, we bound $|D|$:
\begin{align*}
  |D(x,t)| \lesssim \|\partial_t \nabla \phix\|_{\infty} \| \PsiHG - \PsiG \|_{\infty} \lesssim \Delta t^{q_t+1} + h^{q_s+1}
\end{align*}
where the last step follows from \eqref{eq:PsiHG-PsihG} and \eqref{eq:PsihG-PsiG}. Putting it all together yields the claim.
 \end{proof}

 \subsection{Proof of \eqref{Theta_h_Psi_diff_ho_boundA} and \eqref{Theta_h_Psi_diff_ho_boundB} for \protect\FE~blending} \label{app:Theta_h_Psi_diff_ho_bounds:FEblend}
\begin{proof}
 In the case of the \FE~blending, the result on $\OG$ follow directly from \cref{Theta_h_Psi_Gamm_diff_bound_Dr_dt} 
 and \cref{Theta_h_Psi_Gamm_diff_bound_first_grad}. It remains to show the estimates for $\OGp \backslash \OG$. We start to tackle \cref{Theta_h_Psi_diff_ho_boundA} with \cref{dt_E_commutation_lemma}
  \begin{align*}
   \Delta t^{m_t} h^{m_s} \| D^{m_s} \partial_t^{m_t} (\Theh \!-\! \Psii)& \|_{\infty, \ThGQS} \!=\! \Delta t^{m_t} h^{m_s} \| D^{m_s}  \mathcal{E}^{\partial \OG} \partial_t^{m_t}\! \left( \ThehG \!-\! \PsiG  \right) \!\|_{\infty, \ThGQS}\\
   \hspace*{-1cm}\overset{\cref{ext_bnd_lemma_lo}\&\cref{ext_bnd_lemma_ho}}{\lesssim}\hspace*{-1cm}
    & \max_{F \in \Facets(\partial \OG)} \sum_{r=m_s}^{q_s+1} \Delta t^{m_t} h^{r} \| D^r \partial_t^{m_t} \left( \ThehG - \PsiG \right) \|_{\infty,F \times \In{n}}\\
   & ~~~~~~~~~~~~~~~~~~~~~~~~ + \Delta t^{m_t} \|\partial_t^{m_t} \left( \ThehG - \PsiG \right)\|_{\infty, \Verts(\partial \OG) \times \In{n}} \\
   \lesssim ~ & h^{q_s + 1} + \Delta t^{q_t + 1} \quad \textnormal{by} \quad \cref{Theta_h_Psi_Gamm_diff_bound_Dr_dt}.
  \end{align*}
  \cref{Theta_h_Psi_diff_ho_boundB} follows from \cref{Theta_h_Psi_diff_ho_boundA}
  exploiting \cref{ass:dtlesssimh_feblend}.
\end{proof}   

\subsection{Proof of \cref{interpollemma2}} \label{app:interpollemma2}
\begin{proof}
  We start with $\| u^e \circ \Psist \|_{\Hrs{0}{\ell_t}(\EQlin)}$ and apply the setting of \cite[Theorem 2.1]{faa_di_bruno_constantine}. In the following, we will use the notation of this paper without further explanation and refer the reader to \cite{faa_di_bruno_constantine} for these. We identify $d+1$ in this text with $d$ in \cite{faa_di_bruno_constantine}, $u^e \circ \Psist$ with $h$, $u^e$ with $f$ and the components of $\Psist$ with $g^{(1)}, \dots, g^{(d+1)}$. Then we investigate the constituent expressions of the norm, which are $D^{\boldsymbol \nu} (u^e \circ \Psist)$ at some point $(x,t)$ for $\boldsymbol \nu = (0,\dots,0,s_t)$ with $s_t \leq \ell_t$. Moreover, $n= s_t$ and we observe that all the expressions $f_{\boldsymbol \lambda}$ in \cite[Theorem 2.1]{faa_di_bruno_constantine} are bounded by $\| u^e \|_{H^{\ell_t}(\Psist(\EQlin))}$ at $(x,t)$. In regards to the contribution from $\Psist$, we note that the indices $\mathbf{l}_j$ must give $\boldsymbol \nu$ as the sum weighted with $|\mathbf{k}_j| > 0$. Hence, $\mathbf{l}_j \leq \boldsymbol \nu$. So only bounded contributions from $\| \Psist\|_{\Hrs{0}{\ell_t}(\EQlin)}$ appear.

 Next, we proceed similarly for $\| u^e \circ \Psist \|_{\Hrs{\ell_s}{0}(\EQlin)}$. Let $\boldsymbol \nu = (\nu_1,\dots, \nu_d, 0)$ be some appropriate derivation multi-index, i.e. $n = \sum_{i=0}^d \nu_i \leq \ell_s$. This means that again all the terms $f_{\boldsymbol \lambda}$ are bounded by $\| u^e \|_{H^{\ell_s}(\Psist(\EQlin))}$ at the point of consideration. For the derivatives of $\Psist$, we again note that the multi-indices $\mathbf{l}_j$ must sum up to $\boldsymbol \nu$ with the according weighting and hence all these are bounded by terms from $\| \Psist\|_{\Hrs{\ell_s}{0}(\EQlin)} \lesssim 1$.

 In relation to $\| \partial_t (u^e \circ \Psist) \|_{\Hrs{\ell_s}{0}(\EQlin)}$ we have to consider a multi-index $\boldsymbol \nu = (\nu_1, \dots, \nu_d, 1)$, so that $n = 1+ \sum_{i=1}^d \nu_i \leq \ell_s +1$. This causes $\| u^e \|_{H^{\ell_s+1}( \Psist(\EQlin))}$ to be an appropriate upper bound for all the $f_{\boldsymbol \lambda}$. In relation to $\mathbf{l}_j \leq \boldsymbol \nu$, all corresponding summands are contained in $\| \Psist\|_{\Hrs{\ell_s}{0}(\EQlin)} + \| \partial_t \Psist\|_{\Hrs{\ell_s}{0}(\EQlin)}$.

 For $ \| \nabla (u^e \circ \Psist) \|_{\Hrs{0}{\ell_t}(\EQlin)}$, the argument proceeds similarly with an multi-index such as $\boldsymbol \nu = (1,\dots,0,\ell_t)$, $n \leq \ell_t + 1$.
\end{proof}

\end{document}